\title{Graphs of quantum groups and K-amenability}
\author{Pierre Fima and Amaury Freslon}
\thanks{The first author is partially supported by ANR grants OSQPI and NEUMANN}
\keywords{Quantum groups, quantum Bass-Serre tree, K-amenability}
\subjclass[2010]{46L09, 46L65}
\address{Univ. Paris Diderot, Sorbonne Paris Cité, UMR 7586, , 75013, Paris, France}
\email{pfima@math.jussieu.fr, freslon@math.jussieu.fr}
\date{\today}
\theoremstyle{plain}
\newtheorem{thm}{Theorem}[section]
\newtheorem{prop}[thm]{Proposition}
\newtheorem{cor}[thm]{Corollary}
\newtheorem{lem}[thm]{Lemma}
\theoremstyle{definition}
\newtheorem{de}[thm]{Definition}
\newtheorem{ex}[thm]{Example}
\theoremstyle{remark}
\newtheorem{rem}[thm]{Remark}
\DeclareMathOperator{\E}{E}
\DeclareMathOperator{\Id}{Id}
\DeclareMathOperator{\Irr}{Irr}
\DeclareMathOperator{\HNN}{HNN}
\DeclareMathOperator{\I}{Im}
\DeclareMathOperator{\II}{II}
\DeclareMathOperator{\KK}{KK}
\DeclareMathOperator{\Pol}{Pol}
\DeclareMathOperator{\Span}{Span}
\DeclareMathOperator{\V}{V}
\newcommand{\B}{\mathcal{B}}
\newcommand{\C}{\mathbb{C}}
\newcommand{\D}{\Delta}
\newcommand{\EE}{\mathbb{E}}
\newcommand{\F}{\mathcal{F}}
\newcommand{\Fi}{\varphi}
\newcommand{\G}{\mathbb{G}}
\newcommand{\Gr}{\mathcal{G}}
\newcommand{\HH}{\mathcal{H}}
\newcommand{\K}{\mathcal{K}}
\newcommand{\LL}{\mathcal{L}}
\newcommand{\PP}{\mathcal{P}}
\newcommand{\Q}{\mathcal{Q}}
\newcommand{\R}{\mathcal{R}}
\newcommand{\T}{\mathcal{T}}
\newcommand{\Lde}{{\rm L}^2}
\newcommand{\h}{\widehat}
\newcommand{\ii}{\imath}
\newcommand{\rev}{\overline}
\newcommand{\ot}{\otimes}
\begin{document}

\begin{abstract}
Building on a construction of J-P. Serre, we associate to any graph of C*-algebras a maximal and a reduced fundamental C*-algebra and use this theory to construct the fundamental quantum group of a graph of discrete quantum groups. This construction naturally gives rise to a quantum Bass-Serre tree which can be used to study the K-theory of the fundamental quantum group. To illustrate the properties of this construction, we prove that if all the vertex qantum groups are amenable, then the fundamental quantum group is K-amenable. This generalizes previous results of P. Julg, A. Valette, R. Vergnioux and the first author.
\end{abstract}

\maketitle

\section{Introduction}

\noindent One of the first striking application of K-theory to the theory of operator algebras was the proof by M.V. Pimsner and D.V. Voiculescu in \cite{pimsnervoiculescu1982} that the reduced C*-algebras of free groups with different number of generators are not isomorphic. It relies on an involved computation of the K-theory of these C*-algebras, which appear not to be equal. From then on, K-theory of group C*-algebras has been a very active field of research, in particular in relation with algebraic and geometric problems, culminating in the celebrated \emph{Baum-Connes conjecture} (see for example \cite{baum1994classifying}).

\vspace{0.2cm}

\noindent Along the way J. Cuntz introduced the notion of \emph{K-amenability} in \cite{cuntz1983k}. Noticing that the maximal and reduced C*-algebras of free groups have the same K-theory, he endeavoured to give a conceptual explanation of this fact based on a phenomenon quite similar to amenability, but on a K-theoretical level. Combined with a short and elegant computation of the K-theory of the maximal C*-algebras of free groups, he could therefore recover the result of M.V. Pimsner and D.V. Voiculescu in more conceptual way. K-amenability implies in particular that the K-theory of any reduced crossed-product by the group is equal to the K-theory of the corresponding full crossed-product, thus giving a powerful tool for computing K-theory of C*-algebras. The original definition of J. Cuntz was restricted to discrete groups but was later generalized to arbitrary locally compact groups by P. Julg and A. Valette in \cite{julg1984k}. In this seminal paper, they also proved that any group acting on a tree with amenable stabilizers is K-amenable. As particular cases, one gets that free products and HNN extensions of amenable groups are K-amenable. This result was later extended to groups acting on trees with K-amenable stabilizers by M.V. Pimsner in \cite{pimsner1986kk}. Let us also mention the notion of \emph{K-nuclearity} developped by G. Skandalis in \cite{skandalis1988notion} and further studied by E. Germain who proved in \cite{germain1996kk} that a free product of amenable C*-algebras is K-nuclear. However, we will concentrate in the present paper on the Julg-Valette theorem and extend it to the setting of discrete quantum groups. Let us first recall an algebraic description of groups acting on trees which will be better suited to our purpose.

\vspace{0.2cm}

\noindent A \emph{graph of groups} is the data of a graph $\Gr$ together with groups attached to each vertex and each edge in a way compatible with the graph structure. One can generalize the topological construction of the fundamental group $\pi_{1}(\Gr)$ to include the additional data of the groups and thus obtain the notion of \emph{fundamental group of a graph of groups}. The core of Bass-Serre theory, developped in \cite{serre1977arbres}, is a powerful correspondance between this construction and the structure of groups acting on trees. Two very important particular cases are amalgamated free products and HNN extensions. In both cases, Bass-Serre theory provides us with a tree on which the group acts in a canonical way.

\vspace{0.2cm}

\noindent In the context of locally compact quantum groups, K-amenability was defined and studied by R. Vergnioux in \cite{vergnioux2004k}, building on the work of S. Baaj and G. Skandalis on equivariant KK-theory for coactions of Hopf-C*-algebras \cite{baaj1989c}. In the discrete case, R. Vergnioux was able to prove that several classical characterizations still hold in the quantum setting (some of them are recalled in Theorem \ref{thm:kamenabilitydefinition}). He also proved the K-amenability of amalgamated free products of amenable discrete quantum groups. His proof used the first example of a \emph{quantum Bass-Serre tree}. This is a pair of Hilbert C*-modules over the C*-algebra of a compact quantum group endowed with actions which can be used as a "geometric object" for the study of K-theoretical properties. Similar techniques where used by the first author to prove K-amenability of HNN extensions of amenable discrete quantum groups in \cite{fima2012k}. The use of quantum Bass-Serre trees also proved crucial in the study of the Baum-Connes conjecture for discrete quantum groups by R. Vergnioux and C. Voigt in \cite{vergnioux2011k}.

\vspace{0.2cm}

\noindent In the present paper, we generalize the construction of the fundamental group to graphs of discrete quantum groups. As one could expect, this fundamental quantum group comes along with a quantum Bass-Serre tree which can be used to construct a natural KK-element. Our construction is in some sense the most general construction of a quantum Bass-Serre tree such that the "quotient" by the action of the quantum group is a classical graph. We then use techniques combining the ones of \cite{vergnioux2004k} and \cite{fima2012k} to prove that if all the vertex groups are amenable, then the resulting quantum group is K-amenable. In view of the Bass-Serre equivalence, this generalizes the result of \cite{julg1984k}. Note that this gives a large class of K-amenable discrete quantum groups and improves the aforementionned results in the quantum setting. For example, it is known by \cite{fima2012k} that an HNN extension of amenable discrete quantum groups is K-amenable, but it was not known that if we again take an HNN extension or a free product with a third amenable discrete quantum group, then the resulting quantum group will still be K-amenable.

\vspace{0.2cm}

\noindent Let us now outline the organization of the paper. In Section \ref{sec:preliminaries}, we specify some notations and conventions used all along the paper and we give some basic definitions and results concerning quantum groups and K-amenability. In section \ref{sec:graphCstar}, we associate to any graph of C*-algebras a full and a reduced fundamental C*-algebra and give some structure results. This section is rather long but contains most of the technical results of this paper. It ends with an "unscrewing" technique which can be used to prove that some properties of the vertex C*-algebras are inherited by the fundamental C*-algebras. In Section \ref{sec:fundamentalqgroup}, we use the previous results to define the fundamental quantum group of a graph of quantum groups and describe its Haar state and representation theory. Eventually, we prove in Section \ref{sec:kamenability} that the fundamental quantum group of a graph of amenable discrete quantum groups is K-amenable.  Note that one could also define \emph{graphs of von Neumann algebras} and work out similar constructions. This is outlined in the Appendix.

%%%%%%%%%%%%%%%%%%%%%%%%%%
\section{Preliminaries}\label{sec:preliminaries}
%%%%%%%%%%%%%%%%%%%%%%%%%%

%%%%%%%%%%%%%%%%%%%%%%%%%%
\subsection{Notations and conventions}
%%%%%%%%%%%%%%%%%%%%%%%%%%

In this paper all the Hilbert spaces, Hilbert C*-modules and C*-algebras are assumed to be separable. Moreover, all the C*-algebras are assumed to be unital. The scalar products on Hilbert spaces or Hilbert C*-modules are denoted by $\langle .,.\rangle$ and are supposed to be linear in the second variable. For two Hilbert spaces $H$ and $K$, $\B(H, K)$ will denote the set of bounded linear maps from $H$ to $K$ and $\B(H):=\B(H, H)$. For a C*-algebra $A$ and Hilbert $A$-modules $\HH$a nd $\mathcal{K}$, we denote by $\LL_{A}(\HH,\mathcal{K})$ the set of bounded adjointable $A$-linear operators from $\HH$ to $\mathcal{K}$ and $\mathcal{L}_A(\HH)=\mathcal{L}_A(\HH,\HH)$. 

\vspace{0.2cm}

\noindent We will also use the following terminology : if $\HH$ is a Hilbert $A$-module and $\varphi\in A^{*}$ is a state, the \emph{GNS construction} of $(\HH,\varphi)$ is the triple $(H, \pi, \eta)$, where $H$ is the Hilbert space obtained by separation and completion of $\HH$ with respect to the scalar product $\langle \xi, \eta\rangle_{H} = \varphi(\langle \xi, \eta\rangle_{\HH})$, $\eta\;:\;\HH\rightarrow H$ is the canonical linear map with dense range and $\pi : \mathcal{L}_{A}(\HH)\rightarrow \B(H)$ is the induced unital $*$-homomorphism. Note that $\pi$ and $\eta$ are faithful as soon as $\varphi$ is. Observe also that if $\mathcal{K}$ is another Hilbert $A$-module and if $(K, \rho, \xi)$ denotes the GNS construction of $(\mathcal{K}, \varphi)$, then we also have an obvious induced linear map $\mathcal{L}_{A}(\HH, \mathcal{K})\rightarrow \mathcal{B}(H, K)$ which respects the adjoint and the composition (if we take a third Hilbert $A$-module). 

\vspace{0.2cm}

\noindent If $\Gr$ is a graph in the sense of \cite[Def 2.1]{serre1977arbres}, its vertex set will be denoted $\V(\Gr)$ and its edge set will be denoted $\E(\Gr)$. For $e\in\E(\Gr)$ we denote by $s(e)$ and $r(e)$ respectively the source and range of $e$ and by $\rev{e}$ the inverse edge of $e$. An \emph{orientation} of $\Gr$ is a partition $\E(\Gr) = \E^{+}(\Gr)\sqcup \E^{-}(\Gr)$ such that $e\in \E^{+}(\Gr)$ if and only if $\rev{e}\in \E^{-}(\Gr)$.

\vspace{0.2cm}

\noindent Finally, we will always denote by $\ii$ the identity map. 

%%%%%%%%%%%%%%%%%%%%%%%%%%
\subsection{Compact quantum groups}\label{section:CQG}
%%%%%%%%%%%%%%%%%%%%%%%%%%

We briefly recall the main definitions and results of the theory of compact quantum groups in order to fix notations. The reader is referred to \cite{woronowicz1995compact} or \cite{maes1998notes} for details and proofs.

\begin{de}
A \emph{compact quantum group} is a pair $\G = (C(\G), \D)$ where $C(\G)$ is a unital C*-algebra and $\D : C(\G)\rightarrow C(\G)\otimes C(\G)$ is a unital $*$-homomorphism such that
\begin{equation*}
(\D\otimes \ii)\circ\D = (\ii \otimes \D)\circ \D
\end{equation*}
and the linear span of $\D(C(\G))(1\otimes C(\G))$ as well as the linear span of $\D(C(\G))(C(\G)\otimes 1)$ are dense in $C(\G)\otimes C(\G)$. 
\end{de}

\begin{thm}[Woronowicz]
Let $\G$ be a compact quantum group. There is a unique state $h\in C(\G)^{*}$, called the \emph{Haar state} of $\G$, such that for every $x\in C(\G)$,
$$
(h\otimes \ii)\circ\D(x) = h(x).1\text{ and }(\ii\otimes h)\circ\D(x) = h(x).1.
$$
\end{thm}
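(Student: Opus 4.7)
The strategy is the classical one due to Woronowicz: construct the Haar state as a cluster point of Cesàro averages of convolution powers, then exploit the density axioms to promote partial invariance to full invariance.

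For two states $\varphi,\psi\in C(\G)^{*}$, define the convolution $\varphi*\psi:=(\varphi\ot\psi)\circ\D$. Coassociativity makes $*$ associative and, since $\D$ is a unital $*$-homomorphism, $\varphi*\psi$ is again a state. Fix any state $\varphi_{0}$, form the Cesàro means $\varphi_{n}:=\frac{1}{n}\sum_{k=1}^{n}\varphi_{0}^{*k}$, and extract a weak-$*$ cluster point $h$ by compactness of the state space. The elementary telescoping estimate $\|\varphi_{0}*\varphi_{n}-\varphi_{n}\|\leq 2/n$ passes to the cluster point and gives $\varphi_{0}*h=h$; iterating and taking convex combinations then yields $h*h=h$, so $h$ is an idempotent state.

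The next step is to upgrade this to genuine left-invariance $(\ii\ot h)\circ\D(x)=h(x)\cdot 1$ for every $x\in C(\G)$. Here the density of $\Span\bigl(\D(C(\G))(C(\G)\ot 1)\bigr)$ in $C(\G)\ot C(\G)$ is essential: it forces the idempotent completely positive map $\Phi:=(\ii\ot h)\circ\D$ to take values in the scalars, by comparing the action of $\Phi$ on elements of the form $\D(a)(b\ot 1)$ (where idempotency gives a transparent computation) with its action on arbitrary tensors. A symmetric construction based on the second density axiom produces a right-invariant state $h'$. Uniqueness, as well as the coincidence $h=h'$, then follows from the short computation (using $h_{L}(1)=h_{R}(1)=1$): for any left-invariant $h_{L}$ and right-invariant $h_{R}$,
\[
h_{L}(x)=h_{R}\bigl((h_{L}\ot\ii)\circ\D(x)\bigr)=(h_{L}\ot h_{R})\circ\D(x)=h_{L}\bigl((\ii\ot h_{R})\circ\D(x)\bigr)=h_{R}(x).
\]

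The main obstacle is the density-to-invariance step: extracting from the abstract density hypothesis the concrete fact that the fixed-point space of $\Phi$ reduces to the scalars. Everything else is formal manipulation with convolutions and weak-$*$ compactness, essentially identical to the classical construction of Haar measure on a compact group; it is precisely here that the density axioms of a compact quantum group earn their keep.
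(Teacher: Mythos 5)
The paper itself offers no proof of this theorem --- it is quoted from Woronowicz with a reference --- so the only question is whether your sketch is sound, and unfortunately the central step is not. You rightly identify the density-to-invariance step as the crux, but the mechanism you propose for it is false: an idempotent state need not be invariant, and idempotency together with the two density axioms does not force $\Phi=(\ii\ot h)\circ\D$ to take values in the scalars. Take $C(G)$ for a classical compact group $G$ and let $\varepsilon$ be evaluation at the identity element. Then $\varepsilon$ is a state with $\varepsilon\ast\varepsilon=\varepsilon$, both density axioms hold, and yet $(\ii\ot\varepsilon)\circ\D=\ii$, whose range is all of $C(G)$. The same example already defeats your first step: if you ``fix any state $\varphi_{0}$'' and happen to take $\varphi_{0}=\varepsilon$, then $\varphi_{0}^{\ast k}=\varepsilon$ for every $k$, the Ces\`aro means are constant, and the unique cluster point is $\varepsilon$ itself, not the Haar state. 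So the argument as written would ``prove'' that the counit is left-invariant, which is false; the gap is not a missing detail but a step that fails.

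What the classical proof actually uses at this point is the absorption lemma: if $\omega$ and $h$ are states with $h\ast\omega=h$ (resp.\ $\omega\ast h=h$) and $\rho$ is a positive functional with $\rho\leq\omega$, then $h\ast\rho=\rho(1)h$ (resp.\ $\rho\ast h=\rho(1)h$). Its proof is a Cauchy--Schwarz argument in the GNS representation of $\omega$, and it is exactly there --- not in an analysis of the range of the idempotent ucp map $\Phi$ --- that the density of $\Span\bigl(\D(C(\G))(1\ot C(\G))\bigr)$ and of $\Span\bigl(\D(C(\G))(C(\G)\ot 1)\bigr)$ is consumed. One must then combine the states $h_{\omega}$ obtained from Ces\`aro means of different $\omega$ (for an arbitrary state $\rho$ one passes to $\tfrac12(\omega+\rho)$ and applies the absorption lemma to both summands) to produce a single state $h$ with $\rho\ast h=h\ast\rho=\rho(1)h$ for every $\rho\in C(\G)^{*}$, which is equivalent to the two displayed identities. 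Your closing uniqueness computation is correct and standard, and the Ces\`aro/compactness formalism is fine, but the existence half needs the absorption lemma in place of the claimed ``idempotent plus density implies scalar-valued'' implication.
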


\noindent The Haar state need not be faithful. Let $C_{\text{red}}(\G)$ be the C*-algebra obtained by the GNS construction of the Haar state. $C_{\text{red}}(\G)$ is called the \emph{reduced C*-algebra} of the compact quantum group $\G$. By the invariance properties of the Haar state, the coproduct $\Delta$ induces a coproduct on $C_{\text{red}}(\G)$ which turns it into a compact quantum group called the \emph{reduced form} of $\G$. The Haar state on the reduced form of $\G$ is faithful by construction.

\begin{de}
Let $\G$ be a compact quantum group. A \emph{representation} of $\G$ of dimension $n$ is a matrix $(u_{i j})\in M_{n}(C(\G)) = M_{n}(\C)\otimes C(\G)$ such that for all $1\leqslant i,j\leqslant n$,
\begin{equation*}
\D(u_{i j}) = \sum_{k} u_{ik}\otimes u_{k j}.
\end{equation*}
\end{de}

\noindent A representation is called \emph{unitary} if $u=(u_{ij})\in M_{n}(\C)\otimes C(\G)$ is a unitary. An \emph{intertwiner} between two representations $u$ and $v$ of dimension respectively $n$ and $m$ is a linear map $T : \C^{n} \rightarrow \C^{m}$ such that $(T\ot \ii)u = v( T\ot\ii)$. If there exists a unitary intertwiner between $u$ and $v$, they are said to be \emph{unitarily equivalent}. A representation is said to be \emph{irreducible} if its only self-intertwiners are the scalar multiples of the identity. The \emph{tensor product} of the two representations $u$ and $v$ is the representation
\begin{equation*}
u\otimes v = u_{13}v_{23}\in M_{n}(\C)\otimes M_{m}(\C)\otimes C(\G) \simeq M_{nm}(\C)\otimes C(\G).
\end{equation*}

\begin{thm}[Woronowicz]
Every unitary representation of a compact quantum group is unitarily equivalent to a direct sum of irreducible unitary representations.
\end{thm}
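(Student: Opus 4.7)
The plan is to induct on the dimension $n$, with base case $n=1$ trivial. Given a unitary representation $u=(u_{ij})\in M_n(\C)\ot C(\G)$, the key object is the self-intertwiner algebra
\[
\mathrm{End}(u)=\{T\in M_n(\C) : (T\ot\ii)u=u(T\ot\ii)\},
\]
a unital subalgebra of $M_n(\C)$. The crucial observation is that, because $u$ is unitary, $\mathrm{End}(u)$ is in fact a $*$-subalgebra of $M_n(\C)$: taking adjoints of $(T\ot\ii)u=u(T\ot\ii)$ in $M_n(\C)\ot C(\G)$ gives $u^{*}(T^{*}\ot\ii)=(T^{*}\ot\ii)u^{*}$, and multiplying by $u$ on the appropriate sides, using $uu^{*}=u^{*}u=1$, yields $(T^{*}\ot\ii)u=u(T^{*}\ot\ii)$.

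If $\mathrm{End}(u)=\C\cdot I_n$, then $u$ is irreducible by definition and we are done. Otherwise, being a finite-dimensional $*$-subalgebra of $M_n(\C)$, $\mathrm{End}(u)$ is semisimple and in particular contains a non-trivial self-adjoint projection $p$ (for instance a spectral projection of any non-scalar self-adjoint intertwiner). Set $H_1=p\C^n$ and $H_2=(1-p)\C^n$, giving an orthogonal decomposition $\C^n=H_1\oplus H_2$ with $0<\dim H_i<n$. Since $p$ and $1-p$ both lie in $\mathrm{End}(u)$, the matrix of $u$ with respect to this decomposition is block-diagonal, so $u=u_1\oplus u_2$ with $u_i$ a representation on $H_i$. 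Unitarity of $u$ together with the orthogonality of the decomposition immediately forces each $u_i$ to be unitary. The induction hypothesis applied to each $u_i$ finishes the argument.

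The only real subtlety is the stability of $\mathrm{End}(u)$ under the adjoint, which is where the unitarity of $u$ plays its decisive role; without it, $\mathrm{End}(u)$ would merely be an algebra and could contain no non-trivial projections even when $u$ has invariant subspaces. Once the $*$-algebra structure is in place, the rest is standard finite-dimensional linear algebra combined with the structure of finite-dimensional $*$-algebras. We remark that an alternative and more intrinsic route would use the Haar state provided by the previous theorem to average an arbitrary $A\in M_n(\C)$ into an intertwiner $\widetilde{A}=(\ii\ot h)(u^{*}(A\ot 1)u)$, producing many self-intertwiners directly; this is, however, unnecessary for the statement at hand.
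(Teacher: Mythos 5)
Your argument is correct, and it is essentially the standard proof of this classical theorem of Woronowicz. The paper itself does not prove the statement -- it is quoted from the literature (the reader is referred to \cite{woronowicz1995compact} and \cite{maes1998notes}) -- so there is no in-paper proof to compare against; but your route (show that unitarity makes $\mathrm{End}(u)$ a unital $*$-subalgebra of $M_{n}(\C)$, extract a non-trivial self-adjoint projection when $u$ is not irreducible, split $u$ into an orthogonal direct sum of unitary subrepresentations, and induct on the dimension) is exactly the argument found in those references. The one step you state without verification is that each block $u_{i}$ again satisfies $\D((u_i)_{jk})=\sum_{l}(u_i)_{jl}\ot (u_i)_{lk}$; this does follow immediately from the block-diagonal form, since the cross terms in $\D(u_{jk})=\sum_{l}u_{jl}\ot u_{lk}$ vanish, but it is worth a sentence. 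Your closing remark about averaging with the Haar state is also apt: that device is what one needs for the stronger statement that every (not necessarily unitary) finite-dimensional representation is equivalent to a unitary one, but, as you say, it is not needed here.
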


\noindent Let $\Irr(\G)$ be the set of equivalence classes of irreducible unitary representations of $\G$ and, for $\alpha\in \Irr(\G)$, denote by $u^{\alpha}$ a representative of $\alpha$. The linear span of the elements $u^{\alpha}_{ij}$ for $\alpha\in \Irr(\G)$ forms a Hopf-$*$-algebra $\Pol(\G)$ which is dense in $C(\G)$. Its enveloping C*-algebra is denoted $C_{\text{max}}(\G)$ and it has a natural quantum group structure called the \emph{maximal form} of $\G$. By universality, there is a surjective $*$-homomorphism
\begin{equation*}
\lambda_{\G} : C_{\text{max}}(\G) \rightarrow C_{\text{red}}(\G)
\end{equation*}
which intertwines the coproducts.

\begin{rem}\label{rem:discreteqgroups}
The C*-algebras $C_{\text{red}}(\G)$ and $C_{\text{max}}(\G)$ should be thought of as the reduced and maximal C*-algebras of the \emph{dual discrete quantum group} $\h{\G}$. This point of view justifies the terminology "discrete quantum groups" used in the paper.
\end{rem}

\noindent $C_{\text{max}}(\G)$ admits a one-dimensional representation $\varepsilon\,:\, C_{\text{max}}(\G)\rightarrow\C$, called the \emph{trivial representation} (or the \emph{counit}) and defined by $\varepsilon(u^{\alpha}_{ij})=\delta_{ij}$ for all $\alpha\in\Irr(\G)$ and every $i,j$. The counit is the unique unital $*$-homomorphism $\varepsilon\,:\,C_{\text{max}}(\G)\rightarrow\C$ such that $(\varepsilon\ot\ii)\Delta=\ii=(\ii\ot\varepsilon)\Delta$.

%%%%%%%%%%%%%%%%%%%%%%%%%%
\subsection{K-amenability}
%%%%%%%%%%%%%%%%%%%%%%%%%%

\begin{de}
A compact quantum group $\G$ is said to be \emph{co-amenable} if $\lambda_{\G}$ is an isomorphism. We will equivalently say that $\h{\G}$ is \emph{amenable}.
\end{de}

\noindent Like in the classical case, co-amenability has several equivalent characterizations, we only give the one which will be needed in the sequel (see \cite[Thm 3.6]{bedos2001co} for a proof).

\begin{prop}\label{prop:amenability}
A compact quantum group $\G$ is co-amenable if and only if the trivial representation factors through $\lambda_{\G}$.
\end{prop}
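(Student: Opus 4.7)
The forward implication is immediate: if $\G$ is co-amenable then $\lambda_{\G}$ is a $*$-isomorphism, and $\varepsilon = (\varepsilon \circ \lambda_{\G}^{-1}) \circ \lambda_{\G}$ factors through $\lambda_{\G}$.

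For the converse, suppose there exists a unital $*$-homomorphism $\tilde\varepsilon : C_{\text{red}}(\G) \to \C$ with $\tilde\varepsilon \circ \lambda_{\G} = \varepsilon$. Since $\lambda_{\G}$ is automatically surjective, the task is to prove injectivity. First I would observe that $\tilde\varepsilon$ inherits the counit identities on $C_{\text{red}}(\G)$: from $\tilde\varepsilon(\lambda_{\G}(u^{\alpha}_{ij})) = \varepsilon(u^{\alpha}_{ij}) = \delta_{ij}$ together with the formula $\Delta(u^{\alpha}_{ij}) = \sum_{k} u^{\alpha}_{ik} \otimes u^{\alpha}_{kj}$, one obtains $(\tilde\varepsilon \otimes \ii) \Delta = \ii = (\ii \otimes \tilde\varepsilon) \Delta$ on the dense $*$-subalgebra $\lambda_{\G}(\Pol(\G))$, and hence on all of $C_{\text{red}}(\G)$ by continuity.

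The content of the hypothesis is exactly that the trivial representation $\varepsilon$ is weakly contained in the regular representation $\lambda_{\G}$. The main step is to upgrade this to the statement that every unital $*$-representation $\pi$ of $C_{\max}(\G)$ is weakly contained in $\lambda_{\G}$, equivalently $\|\pi(a)\| \leq \|\lambda_{\G}(a)\|$ for every $a \in C_{\max}(\G)$, which yields $\ker(\lambda_{\G}) = \{0\}$. I would do this via the quantum analogue of Fell absorption: one shows that $\pi \otimes \lambda_{\G}$ is weakly equivalent to a multiple of $\lambda_{\G}$, from which
\begin{equation*}
\pi \simeq \pi \otimes \varepsilon \prec \pi \otimes \lambda_{\G} \sim \lambda_{\G}.
\end{equation*}

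The main obstacle lies in implementing Fell absorption in the quantum setting. Classically, for a discrete group $\Gamma$, it is realised by the diagonal unitary $(U_{\pi} f)(g) = \pi(g) f(g)$ on $K \otimes \ell^{2}(\Gamma)$, which conjugates $\pi \otimes \lambda$ into $1 \otimes \lambda$. The quantum counterpart is encoded by the multiplicative unitary of $\G$, but in the non-Kac case additional care is required to handle the modular structure of the Haar state. The detailed execution is carried out in \cite[Theorem 3.6]{bedos2001co}.
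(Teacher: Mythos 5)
The paper does not prove this proposition at all: it simply cites \cite[Thm 3.6]{bedos2001co}, which is also exactly where you send the reader for the only non-trivial step (the quantum Fell absorption principle, i.e.\ that $\pi\otimes\lambda_{\G}$ is weakly equivalent to a multiple of $\lambda_{\G}$). Your outline --- trivial forward direction, and for the converse the chain $\pi\simeq\pi\otimes\varepsilon\prec\pi\otimes\lambda_{\G}\sim\lambda_{\G}$ applied to the universal representation to get $\ker\lambda_{\G}=\{0\}$ --- is a correct sketch of the standard argument carried out in that reference, so the two treatments effectively coincide.
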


\noindent K-amenability admits similar characterizations on the level of KK-theory, which were proved by R. Vergnioux in \cite[Thm 1.4]{vergnioux2004k}. We refer the reader to \cite{blackadar1998k} for the basic definitions and results concerning KK-theory.

\begin{thm}[Vergnioux]\label{thm:kamenabilitydefinition}
Let $\G$ be a compact quantum group. The following are equivalent
\begin{itemize}
\item There exists $\gamma\in \KK(C_{\text{red}}(\G), \C)$ such that $\lambda_{\G}^{*}(\gamma) = [\varepsilon]$ in $\KK(C_{\text{max}}(\G), \C)$.
\item The element $[\lambda_{\G}]$ in invertible in $\KK(C_{\text{max}}(\G), C_{\text{red}}(\G))$.
\end{itemize}
\end{thm}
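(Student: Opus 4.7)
My plan is to treat the two implications separately, following the strategy of J. Cuntz for free groups as adapted to the quantum setting in \cite{vergnioux2004k}.

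\textbf{(2) $\Rightarrow$ (1):} This is a direct consequence of the algebraic properties of the Kasparov product. Suppose $\beta \in \KK(C_{\text{red}}(\G), C_{\text{max}}(\G))$ is a two-sided KK-inverse of $[\lambda_\G]$. Define $\gamma := \beta \otimes_{C_{\text{max}}(\G)} [\varepsilon] \in \KK(C_{\text{red}}(\G), \C)$. Associativity of the Kasparov product then gives
\begin{equation*}
\lambda_\G^{*}(\gamma) = [\lambda_\G] \otimes_{C_{\text{red}}(\G)} \gamma = ([\lambda_\G] \otimes \beta) \otimes_{C_{\text{max}}(\G)} [\varepsilon] = 1_{C_{\text{max}}(\G)} \otimes [\varepsilon] = [\varepsilon].
\end{equation*}

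\textbf{(1) $\Rightarrow$ (2):} This is the substantial direction. The starting observation is that the counit axiom $(\text{id} \otimes \varepsilon) \circ \Delta_{\text{max}} = \text{id}_{C_{\text{max}}(\G)}$ translates to the KK-identity
\begin{equation*}
[\Delta_{\text{max}}] \otimes_{C_{\text{max}}(\G) \otimes C_{\text{max}}(\G)} [\text{id} \otimes \varepsilon] = 1_{C_{\text{max}}(\G)}.
\end{equation*}
The hypothesis $[\varepsilon] = [\lambda_\G] \otimes \gamma$ allows one to replace $[\text{id} \otimes \varepsilon]$ by $[\text{id} \otimes \lambda_\G] \otimes (1_{C_{\text{max}}(\G)} \otimes \gamma)$, producing a factorization of $1_{C_{\text{max}}(\G)}$ through an element that ``uses'' $\gamma$.

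To turn this into a genuine inverse of $[\lambda_\G]$, i.e.\ an element of $\KK(C_{\text{red}}(\G), C_{\text{max}}(\G))$ rather than of $\KK(C_{\text{max}}(\G), C_{\text{max}}(\G))$, one constructs a ``half-coproduct'' KK-class $\tilde{\Delta} \in \KK(C_{\text{red}}(\G), C_{\text{red}}(\G) \otimes C_{\text{max}}(\G))$ compatible with the full coproduct in the sense that $[\lambda_\G] \otimes \tilde{\Delta} = [(\lambda_\G \otimes \text{id}) \circ \Delta_{\text{max}}]$. This class is produced from the multiplicative unitary $V$ of $\G$: the Hilbert $C_{\text{max}}(\G)$-module $\Lde(\G) \otimes C_{\text{max}}(\G)$ carries a left action of $C_{\text{red}}(\G)$ (induced by $V$) commuting with the right action of $C_{\text{max}}(\G)$, and together these assemble into a Kasparov module. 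The candidate inverse is
\begin{equation*}
\beta := \tilde{\Delta} \otimes_{C_{\text{red}}(\G) \otimes C_{\text{max}}(\G)} (\gamma \otimes 1_{C_{\text{max}}(\G)}) \in \KK(C_{\text{red}}(\G), C_{\text{max}}(\G)).
\end{equation*}
The two identities $[\lambda_\G] \otimes \beta = 1_{C_{\text{max}}(\G)}$ and $\beta \otimes [\lambda_\G] = 1_{C_{\text{red}}(\G)}$ then follow by unwinding the construction, using the counit axiom twice on each side together with the hypothesis on $\gamma$.

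The main obstacle is the construction of $\tilde{\Delta}$ and the verification of its compatibility with both coproducts. This is where the quantum nature of the argument requires genuinely new input (the multiplicative unitary and the associated Hilbert-module manipulations) compared to the classical group case, and is the technical heart of \cite[Thm 1.4]{vergnioux2004k}.
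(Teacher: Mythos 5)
First, a remark on scope: the paper does not actually prove this theorem --- it is quoted from \cite[Thm 1.4]{vergnioux2004k} --- so your proposal can only be measured against Vergnioux's argument. Your direction $(2)\Rightarrow(1)$ is correct and standard. In $(1)\Rightarrow(2)$, your ``half-coproduct'' is also sound: it is in fact a genuine $*$-homomorphism $\tilde\delta\colon C_{\text{red}}(\G)\to C_{\text{red}}(\G)\otimes C_{\text{max}}(\G)$, $\lambda_{\G}(u^{\alpha}_{ij})\mapsto\sum_k\lambda_{\G}(u^{\alpha}_{ik})\otimes u^{\alpha}_{kj}$, implemented by the unitary you mention (quantum Fell absorption), and it satisfies $\tilde\delta\circ\lambda_{\G}=(\lambda_{\G}\otimes\ii)\circ\Delta$. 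With $\beta=[\tilde\delta]\otimes(\gamma\otimes 1)$, the identity $[\lambda_{\G}]\otimes\beta=1_{C_{\text{max}}(\G)}$ follows exactly as you say from $(\varepsilon\otimes\ii)\circ\Delta=\ii$ and $\lambda_{\G}^{*}\gamma=[\varepsilon]$.

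The gap is the other identity. Unwinding gives
\begin{equation*}
\beta\otimes[\lambda_{\G}]=[(\ii\otimes\lambda_{\G})\circ\tilde\delta]\otimes(\gamma\otimes 1_{C_{\text{red}}(\G)})=[\Delta_{\text{red}}]\otimes(\gamma\otimes 1_{C_{\text{red}}(\G)}),
\end{equation*}
and at this point no counit is available: a bounded counit on $C_{\text{red}}(\G)$ exists precisely when $\G$ is co-amenable (Proposition \ref{prop:amenability}), which is exactly what one must not assume. So ``using the counit axiom twice on each side'' cannot be run on the reduced side; a priori your $\beta$ is only a right inverse, and $\beta\otimes[\lambda_{\G}]$ is merely an idempotent in $\KK(C_{\text{red}}(\G),C_{\text{red}}(\G))$, which need not be $1$. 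Vergnioux (following Cuntz and Julg--Valette) closes this by passing through equivariant KK-theory: from $\gamma$ one builds a cycle $D$ representing the unit of $\KK^{\h{\G}}(\C,\C)$ in the Baaj--Skandalis sense whose underlying representations are weakly contained in the regular one --- this is where the multiplicative unitary is really needed --- and then applies both descent morphisms: $j_{\text{max}}(D)=1_{C_{\text{max}}(\G)}$ factors as $[\lambda_{\G}]\otimes\sigma$ by weak containment, while the same cycle pushed forward along $\lambda_{\G}$ identifies $\sigma\otimes[\lambda_{\G}]$ with $j_{\text{red}}(D)=1_{C_{\text{red}}(\G)}$, producing a two-sided inverse in one stroke. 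You need either to supply this equivariant step or to give an independent argument for $\beta\otimes[\lambda_{\G}]=1_{C_{\text{red}}(\G)}$.
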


\noindent In any of those two equivalent situations, we will say that $\h{\G}$ is K-amenable.

%%%%%%%%%%%%%%%%%%%%%%%%%%%%%%%%%%%%%%%%%%%%%%%%%%%%
\section{Graphs of C*-algebras}\label{sec:graphCstar}
%%%%%%%%%%%%%%%%%%%%%%%%%%%%%%%%%%%%%%%%%%%%%%%%%%%%

\noindent In this section we give the general construction of a maximal and a reduced fundamental C*-algebra associated to a graph of C*-algebras.

\begin{de}
A \emph{graph of C*-algebras} is a tuple
\begin{equation*}
(\Gr, (A_{q})_{q\in \V(\Gr)}, (B_{e})_{e\in \E(\Gr)}, (s_{e})_{e\in \E(\Gr)})
\end{equation*}
where
\begin{itemize}
\item $\Gr$ is a connected graph.
\item For every $q\in\V(\Gr)$ and every $e\in\E(\Gr)$, $A_{q}$ and $B_{e}$ are unital C*-algebras.
\item For every $e\in \E(\Gr)$, $B_{\rev{e}} = B_{e}$.
\item For every $e\in\E(\Gr)$, $s_{e} : B_{e} \rightarrow A_{s(e)}$ is a unital faithful $*$-homomorphism.
\end{itemize}
For every $e\in \E(\Gr)$, we set $r_{e} = s_{\rev{e}} : B_{e} \rightarrow A_{r(e)}$, $B_{e}^{s} = s_{e}(B_{e})$ and $B_{e}^{r} = r_{e}(B_{e})$.
\end{de}

\noindent The notation will always be simplified in $(\Gr, (A_{q})_{q}, (B_{e})_{e})$.

%%%%%%%%%%%%%%%%%%%%%%%%%%%%%%%%%%%%
\subsection{The maximal fundamental C*-algebra}
%%%%%%%%%%%%%%%%%%%%%%%%%%%%%%%%%%%%

Like in the case of free products, the definition of the maximal fundamental C*-algebra is quite obvious and simple. However, it requires the choice of a maximal subtree of the graph $\Gr$ (which is implicit in the case of free products since the graph is already a tree, see Example \ref{ex:freeproduct}).

\begin{de}
Let $(\Gr, (A_{q})_{q}, (B_{e})_{e})$ be a graph of C*-algebras and let $\T$ be a maximal subtree of $\Gr$. The \emph{maximal fundamental C*-algebra with respect to $\T$} is the universal C*-algebra generated by the C*-algebras $A_{q}$ for $q\in\V(\Gr)$ and by unitaries $u_{e}$ for $e\in\E(\Gr)$ such that
\begin{itemize}
\item For every $e\in\E(\Gr)$, $u_{\rev{e}} = u_{e}^{*}$.
\item For every $e\in\E(\Gr)$ and every $b\in B_{e}$, $u_{\rev{e}}s_{e}(b)u_{e}=r_{e}(b)$.
\item For every $e\in\E(\T)$, $u_{e}=1$.
\end{itemize}
This C*-algebra will be denoted $\pi_{1}^{\text{max}}(\Gr,(A_{q})_{q},(B_{e})_{e},\T)$.
\end{de}

\begin{rem}
It is not obvious that this C*-algebra is not $0$ (i.e. that the relations admit a non-trivial representation). With natural additional assumptions, the non-triviality will be proved by the construction of the reduced fundamental C*-algebra and it will be clear that the inclusions of $A_q$ in the maximal fundamental C*-algebra are faithful.
\end{rem}

\begin{ex}\label{ex:freeproduct}
Let $A_{0}$ and $A_{1}$ be two C*-algebras and let $B$ be a C*-algebra together with injective $*$-homomorphisms $i_{k} : B\rightarrow A_{k}$ for $k = 0, 1$. Let $\Gr$ be the graph with two vertices $p_{0}$ and $p_{1}$ and two edges $e$ and $\overline{e}$, where $s(e)=p_1$ and $r(e)=p_2$. This graph is obviously a tree. Setting $B_{e} = B$, $s_{e} = i_{0}$, $r_{e} = i_{1}$ and $A_{p_{i}} = A_{i}$ yields a graph of C*-algebras whose maximal fundamental C*-algebra with respect to $\Gr$ is the maximal free product $A_{0}\ast^{\text{max}}_{B}A_{1}$ of $A_{0}$ and $A_{1}$ amalgamated over $B$.
\end{ex}

\begin{ex}\label{ex:hnn}
Let $A$ be a C*-algebra, $B$ a C*-subalgebra of $A$ and $\theta : B\rightarrow A$ an injective $*$-homomorphism. Let $\Gr$ be a graph with one vertex $p$ and two edges $e$ and $\overline{e}$, where $e$ is a loop from $p$ to $p$. Obviously, the only maximal subtree of $\Gr$ is the graph with one vertex $p$ and no edge. Setting $B_{e} = B$, $s_{e} = \ii$, $r_{e} = \theta$ and $A_{p} = A$ yields a graph of C*-algebras whose maximal fundamental C*-algebra with respect to 
$\{p\}$ is the maximal HNN extension $\HNN^{\text{max}}(A, B, \theta)$ as defined in \cite[Rmk 7.3]{ueda2005hnn}.
\end{ex}

\noindent By construction, the maximal fundamental C*-algebra of $(\Gr, (A_{q})_{q}, (B_{e})_{e})$ satisfies the following universal property.

\begin{prop}\label{prop:universal}
Let $(\Gr, (A_{q})_{q}, (B_{e})_{e})$ be a graph of C*-algebras, let $\T$ be a maximal subtree of $\Gr$ and let $H$ be a Hilbert space. Assume that for every $q\in \V(\Gr)$, we have a representation $\rho_{q}$ of $A_{q}$ on $H$ and that for every $e\in \E(\Gr)$, we have a unitary $U_{e}\in \B(H)$ such that $U_{\overline{e}}=U_e^*$, $U_{e} = 1$ for all $e\in \E(\T)$ and for every $b\in B_{e}$,
\begin{equation*}
U_{e}^{*}\rho_{s(e)}(s_{e}(b))U_{e} = \rho_{r(e)}(r_{e}(b)).
\end{equation*}
Then, there is a unique representation $\rho$ of $\pi_{1}^{\text{max}}(\Gr,(A_{q})_{q},(B_{e})_{e},\T)$ on $H$ such that for every $e\in\E(\Gr)$ and every $q\in\V(\Gr)$,
\begin{equation*}
\rho(u_e)=U_e \text{ and } \rho|_{A_q}=\rho_q.
\end{equation*}
\end{prop}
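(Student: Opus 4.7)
The plan is to invoke directly the universal property that defines $\pi_{1}^{\text{max}}(\Gr,(A_{q})_{q},(B_{e})_{e},\T)$: the hypotheses on $(\rho_{q})_{q\in\V(\Gr)}$ and $(U_{e})_{e\in\E(\Gr)}$ are precisely a reformulation of the defining relations of the maximal fundamental C*-algebra, so they produce a representation for free.

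More concretely, first I would observe that the family $(\rho_{q}(A_{q}))_{q\in\V(\Gr)}$ together with the unitaries $(U_{e})_{e\in\E(\Gr)}$ sits inside $\B(H)$ and satisfies all the defining relations of the maximal fundamental C*-algebra: the $U_{e}$ are unitary by assumption, $U_{\rev{e}}=U_{e}^{*}$ is assumed, $U_{e}=1$ for $e\in\E(\T)$ is assumed, and the amalgamation relation $U_{e}^{*}\rho_{s(e)}(s_{e}(b))U_{e}=\rho_{r(e)}(r_{e}(b))$ is exactly the third hypothesis. Since each $\rho_{q}$ is a $*$-homomorphism of $A_{q}$, the internal relations of $A_{q}$ are preserved as well.

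By the universal property of $\pi_{1}^{\text{max}}(\Gr,(A_{q})_{q},(B_{e})_{e},\T)$, there is then a unique unital $*$-homomorphism
\begin{equation*}
\rho : \pi_{1}^{\text{max}}(\Gr,(A_{q})_{q},(B_{e})_{e},\T) \longrightarrow \B(H)
\end{equation*}
satisfying $\rho(u_{e})=U_{e}$ for every $e\in\E(\Gr)$ and $\rho|_{A_{q}}=\rho_{q}$ for every $q\in\V(\Gr)$. Uniqueness is immediate because the values of $\rho$ are imposed on a generating set of the algebra.

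The only mildly delicate point is the existence of the universal C*-algebra in the first place, i.e., the fact that the algebraic relations produce a well-defined C*-norm. This is not really an obstacle here, since the data $(\rho_{q},U_{e})$ supplied by the hypothesis already yields a concrete Hilbert space representation of the defining relations, giving a bounded seminorm; alternatively the non-triviality follows from the reduced construction carried out in the remainder of Section \ref{sec:graphCstar}, as already remarked after the definition of $\pi_{1}^{\text{max}}$.
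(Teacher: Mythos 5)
Your proposal is correct and matches the paper exactly: the paper offers no proof of this proposition beyond the phrase ``by construction,'' precisely because the hypotheses are a verbatim restatement of the defining relations of the universal C*-algebra, so the representation exists and is unique by definition of universality. Your additional remark on why the relations admit a bounded representation (hence a well-defined universal C*-norm) mirrors the paper's own remark following the definition of $\pi_{1}^{\text{max}}$.
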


\begin{rem}\label{rem:pathmax}
Let $p_{0}\in \V(\Gr)$. Define $\mathcal{A}$ to be the linear span of $A_{p_{0}}$ and elements of the form $a_{0}u_{e_{1}} \dots u_{e_{n}}a_{n}$ where $(e_{1}, \dots, e_{n})$ is a path in $\Gr$ from $p_{0}$ to $p_{0}$, $a_{0}\in A_{p_{0}}$ and $a_{i}\in A_{r(e_{i})}$ for $1\leqslant i\leqslant n$. Observe that $\mathcal{A}$ is a dense $*$-subalgebra of $\pi_{1}^{\text{max}}(\Gr, (A_{q})_{q}, (B_{e})_{e}, \T)$. Indeed, it suffices to show that it contains $A_{q}$ for every $q\in \V(\Gr)$ and $u_{e}$ for every $e\in \E(\Gr)$. Let $q\in \V(\Gr)$ and $a\in A_{q}$. Let $w = (e_{1}, \dots, e_{n})$ be the unique geodesic path in $\T$ from $p_{0}$ to $q$. Since $e_{i}\in \E(\T)$, we have $u_{e_{i}} = 1$ for every $1\leqslant i\leqslant n$. Hence, $a = u_{e_{1}} \dots u_{e_{n}}a u_{\overline{e}_{n}}\dots u_{\overline{e}_{1}}\in\mathcal{A}$. Now, let $e\in \E(\Gr)\setminus\E(\T)$ and let $(e_{1}, \dots, e_{n})$ (resp. $(f_{1}, \dots, f_{m})$) be the geodesic path in $\T$ from $p_{0}$ to $s(e)$ (resp. $r(e)$). Then, $u_{e} = u_{e_{1}} \dots u_{e_{n}}u_{e}u_{\overline{f}_{m}}\dots u_{\overline{f}_{1}}\in \mathcal{A}$.
\end{rem}

\noindent We will need in the sequel the following slightly more general version of the universal property.

\begin{cor}\label{cor:universal}
Let $(\Gr, (A_{q})_{q}, (B_{e})_{e})$ be a graph of C*-algebras, let $\T$ be a maximal subtree of $\Gr$ and let $p_{0}\in \V(\Gr)$. Assume that for every $q\in \V(\Gr)$, there is a Hilbert space $H_{q}$ together with a representation $\rho_{q}$ of $A_{q}$ and that, for every $e\in \E(\Gr)$, there is a unitary $U_{e} : H_{r(e)}\rightarrow H_{s(e)}$ such that $U_e^*=U_{\overline{e}}$ and, for every $b\in B_{e}$,
\begin{equation*}
U_{e}^{*}\rho_{s(e)}(s_{e}(b))U_{e} = \rho_{r(e)}(r_{e}(b)).
\end{equation*}
Then, there exists a unique representation $\rho$ of $\pi_{1}^{max}(\Gr, (A_{q})_{q}, (B_{e})_{e}, \T)$ on $H_{p_{0}}$ such that $\rho|_{A_{p_0}}=\rho_0$ and, for every path $w = (e_{1}, \dots, e_{n})$ from $p_{0}$ to $p_{0}$ in $\Gr$ and every $a_{0} \in A_{s(e_{1})}, a_{i}\in A_{r(e_{i})}$,
\begin{equation*}
\rho(a_{0}u_{e_{1}} \dots u_{e_{n}}a_{n}) = \rho_{s(e_{1})}(a_{1})U_{e_{1}}\dots U_{e_{n}}\rho_{r(e_{n})}(a_{n}).
\end{equation*}
\end{cor}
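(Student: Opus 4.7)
The plan is to reduce this corollary to Proposition \ref{prop:universal} by transporting everything to the single Hilbert space $H_{p_0}$ using the tree structure. The key observation is that the maximal subtree $\T$ provides, for each vertex $q$, a canonical unitary identification of $H_q$ with $H_{p_0}$, and then the hypotheses needed by the proposition become tautologies.

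More concretely, for each $q\in\V(\Gr)$ let $w_q=(e_1^q,\dots,e_{n_q}^q)$ be the unique geodesic path in $\T$ from $p_0$ to $q$, and define the unitary
\[
W_q = U_{e_1^q}U_{e_2^q}\cdots U_{e_{n_q}^q} : H_q \longrightarrow H_{p_0},
\]
with the convention $W_{p_0}=\Id$. Then set
\[
\tilde{\rho}_q(a) = W_q\,\rho_q(a)\,W_q^*\quad(a\in A_q),\qquad \tilde{U}_e = W_{s(e)}\,U_e\,W_{r(e)}^* \in\B(H_{p_0}).
\]
Three easy verifications are required: (i) $\tilde U_{\overline e}=\tilde U_e^*$ (immediate from $U_{\overline e}=U_e^*$); (ii) $\tilde U_e=1$ for $e\in\E(\T)$, which follows by considering the two possible cases for the geodesic in $\T$ across the edge $e$ (either $W_{r(e)}=W_{s(e)}U_e$ or $W_{s(e)}=W_{r(e)}U_e^*$); and (iii) for $b\in B_e$,
\[
\tilde U_e^*\,\tilde\rho_{s(e)}(s_e(b))\,\tilde U_e = W_{r(e)}\,U_e^*\rho_{s(e)}(s_e(b))U_e\,W_{r(e)}^* = W_{r(e)}\rho_{r(e)}(r_e(b))W_{r(e)}^* = \tilde\rho_{r(e)}(r_e(b)).
\]
Proposition \ref{prop:universal} then produces a representation $\rho$ of $\pi_1^{\text{max}}(\Gr,(A_q)_q,(B_e)_e,\T)$ on $H_{p_0}$ with $\rho(u_e)=\tilde U_e$ and $\rho|_{A_q}=\tilde\rho_q$; in particular $\rho|_{A_{p_0}}=\rho_{p_0}$ since $W_{p_0}=\Id$.

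For the stated formula, one simply expands along a closed path $(e_1,\dots,e_n)$ based at $p_0$ and uses the telescoping identity $W_{r(e_i)}^*W_{s(e_{i+1})}=\Id$ (because $r(e_i)=s(e_{i+1})$) together with $W_{s(e_1)}=\Id=W_{r(e_n)}$; the intermediate $W_q$'s cancel in pairs to leave $\rho_{s(e_1)}(a_0)U_{e_1}\cdots U_{e_n}\rho_{r(e_n)}(a_n)$. Uniqueness follows from Remark \ref{rem:pathmax}, since any representation satisfying the formula is determined on the dense $*$-subalgebra $\mathcal{A}$. There is no genuine obstacle here; the content lies entirely in choosing the right trivialization $W_q$ from the tree, after which every condition of Proposition \ref{prop:universal} drops out automatically.
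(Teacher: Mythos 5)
Your proposal is correct and follows essentially the same route as the paper: the paper also sets $U_{pq}=U_{e_1}\cdots U_{e_n}$ along geodesics of $\T$, conjugates $\rho_q$ by $U_{qp_0}$ and defines $V_e=U_{p_0s(e)}U_eU_{r(e)p_0}$ (your $W_q$ and $\tilde U_e$), then invokes Proposition \ref{prop:universal}. You merely spell out the verifications and the telescoping that the paper leaves as "easily checked."
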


\begin{proof}
The proof amounts to a suitable application of Proposition \ref{prop:universal}. Let $p, q\in\V(\Gr)$ and let $w = (e_{1}, \dots, e_{n})$ be the unique geodesic path in $\T$ from $p$ to $q$. Set $U_{pq}=U_{e_{1}} \dots U_{e_{n}}$ and observe that $U_{pq}^{*} = U_{qp}$. For every $q\in \V(\Gr)$ and every $e\in \E(\Gr)$, we can define a representation $\pi_{q} = U_{qp_{0}}^{*}\rho_q(.)U_{qp_{0}}$ of $A_{q}$ on $H_{p_{0}}$ and a unitary $V_e\in\mathcal{B}(H_{p_0})$ by $V_{e} = U_{p_{0}s(e)}U_{e}U_{r(e)p_{0}}$. It is easily checked that these satisfy the hyptohesis of Proposition \ref{prop:universal}, yielding the result.
\end{proof}

%%%%%%%%%%%%%%%%%%%%%%%%%%%%%%%%%%
\subsection{The reduced fundamental C*-algebra}
%%%%%%%%%%%%%%%%%%%%%%%%%%%%%%%%%%%

We now turn to the construction of the reduced fundamental C*-algebra, which is more involved. The basic idea is to build a concrete representation of the C*-algebras forming the graph together with unitaries satisfying the required relations. To be able to carry out this construction, we will need an extra assumption. From now on, we assume that for every $e\in\E(\Gr)$, there exists a conditional expectation $\EE_{e}^{s} : A_{s(e)} \rightarrow B_{e}^{s}$ and we set $\EE_{e}^{r} = \EE_{\rev{e}}^{s} : A_{r(e)} \rightarrow B_{e}^{r}$.

%%%%%%%%%%%%%%%%%%%%%%%%%%%%%%%%%%%%
\subsubsection{Path Hilbert modules}
%%%%%%%%%%%%%%%%%%%%%%%%%%%%%%%%%%%

For every $e\in\E(\Gr)$ let $(\HH_{e}^{s},\pi_{e}^{s},\eta_{e}^{s})$ be the GNS construction associated to the completely positive map $s_{e}^{-1}\circ \EE_{e}^{s}$. This means that $\HH_{e}^{s}$ is the right Hilbert $B_{e}$-module obtained by separation and completion of $A_{s(e)}$ with respect to the $B_{e}$-valued inner product
\begin{equation*}
\langle x,y\rangle=s_e^{-1}\circ \EE_{e}^{s}(x^*y)\quad\text{for}\quad x,y\in A_{s(e)}.
\end{equation*}
The right action of an element $b\in B_{e}$ is given by right multiplication by $s_{e}(b)$ and the representation
\begin{equation*}
\pi_{e}^{s} : A_{s(e)} \rightarrow \LL_{B_{e}}(\HH_{e}^{s})
\end{equation*}
is induced by the left multiplication. Finally, $\eta_{e}^{s} : A_{s(e)} \rightarrow \HH_{e}^{s}$ is the standard linear map with dense range. Let $\xi_{e}^{s}$ denote the image of $1_{A}$ in $\HH_{e}^{s}$. The triple $(\HH_{\rev{e}}^{s},\pi_{\rev{e}}^{s},\eta_{\rev{e}}^{s})$ will be denoted $(\HH_{e}^{r},\pi_{e}^{r},\eta_{e}^{r})$. Although it is not necessary, we will assume, for convenience and simplicity of notations, that for every $e\in\E(\Gr)$, the conditional expectations $\EE_{e}^{s}$ are GNS-faithful (i.e. that the representations $\pi_{e}^{s}$ are faithful). This allows us to identify $A_{s(e)}$ with its image in $\LL_{B_{e}}(\HH_{e}^{s})$. We will also use, for every $a\in A_{s(e)}$, the notation $\h{a}$ for $\eta_{e}^{s}(a) \in \HH_{e}^{s}$. One should however keep in mind that $\h{a}$ may be zero for some non-zero $a$. Let us also notice that the submodule $\xi_{e}^{s}.B_{e}$ of $\HH_{e}^{s}$ is orthogonally complemented. In fact, its orthogonal complement is the closure $(\HH_{e}^{s})^{\circ}$ of $\{\h{a}\vert a\in A_{s(e)}, \EE_{e}^{s}(a)=0\}$. We thus have an orthogonal decomposition
\begin{equation*}
\HH_{e}^{s} = (\xi_{e}^{s}.B_{e}) \oplus (\HH_{e}^{s})^{\circ}
\end{equation*}
with $(\HH_{e}^{s})^{\circ}.B_{e}^{s} = (\HH_{e}^{s})^{\circ}$. Similarly, the orthogonal complement of $\xi_{e}^{r}.B_{e}$ in $\HH_{e}^{r}$ will be denoted $(\HH_{e}^{r})^{\circ}$ .

\noindent We now turn to the construction of the Hilbert C*-module which will carry our faithful representation of the fundamental C*-algebra. Let $n\geqslant 1$ and let $w = (e_{1}, \dots, e_{n})$ be a path in $\Gr$. We define Hilbert C*-modules $\K_{0}$, $\K_{n}$ and $\K_{i}$ for $1\leqslant i\leqslant n-1$ by
\begin{itemize}
\item $\K_{0} = \HH_{e_{1}}^{s}$
\item If $e_{i+1}\neq \rev{e}_{i}$, then $\K_{i} = \HH_{e_{i+1}}^{s}$
\item If $e_{i+1}= \rev{e}_{i}$, then $\K_{i} = (\HH_{e_{i+1}}^{s})^{\circ}$
\item $\K_{n} = A_{r(e_{n})}$
\end{itemize}
For $0\leqslant i\leqslant n-1$, $\K_{i}$ is a right Hilbert $B_{e_{i+1}}$-module and $\K_{n}$ will be seen as a right Hilbert $A_{r(e_{n})}$-module. We can put compatible left module structures on these Hilbert C*-modules in order to make tensor products. In fact, for $1\leqslant i \leqslant n-1$, the map
\begin{equation*}
\rho_{i} = \pi_{e_{i+1}}^{s}\circ r_{e_{i}} : B_{e_{i}} \rightarrow \LL_{B_{e_{i+1}}}(\K_{i})
\end{equation*}
yields a suitable action of $B_{e_{i}}$ on $\K_{i}$ and left multiplication by $r_{e_{n}}(b)$ for $b\in B_{e_{n}}$ induces a representation
\begin{equation*}
\rho_{n} : B_{e_{n}} \rightarrow \LL_{A_{r(e_{n})}}(\K_{n}).
\end{equation*}
We can now define a right Hilbert $A_{r(e_n)}$-module
\begin{equation*}
\HH_{w}=\K_{0}\underset{\rho_1}{\otimes} \dots \underset{\rho_n}{\otimes} \K_{n}
\end{equation*}
endowed with a faithful left action of $A_{s(e_{1})}$ which is induced by its action on $\K_0$ by left multiplication. This will be called a \emph{path Hilbert module}. Let us describe more precisely the inner product.

\begin{lem}\label{lem:innerproduct}
Let $n\geqslant 1$ and let $w = (e_{1}, \dots, e_{n})$ be a path in $\Gr$. Let $a = \h{a}_{0}\otimes \dots \otimes a_{n}$ and $b = \h{b}_{0}\otimes \dots \otimes b_{n}$ be two elements in $\HH_{w}$. Set $x_{0} = a_{0}^{*}b_{0}$ and, for $1\leqslant k\leqslant n$, set
\begin{equation*}
x_{k} = a_{k}^{*}(r_{e_{k}}\circ s_{e_{k}}^{-1}\circ \EE_{e_{k}}^s(x_{k-1}))b_{k}.
\end{equation*}
Then, $\langle a, b\rangle_{\HH_{w}} = x_{n}\in A_{r(e_{n})}$.
\end{lem}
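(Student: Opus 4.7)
The statement is essentially unpacking the iterated inner product on the internal tensor product $\HH_w = \K_0 \otimes_{\rho_1} \cdots \otimes_{\rho_n} \K_n$. I would proceed by induction on $n$. The base case $n=1$ is immediate: $\HH_w = \K_0 \otimes_{\rho_1} \K_1$, and by the definition of the internal tensor product
\begin{equation*}
\langle \h{a}_0 \otimes a_1,\ \h{b}_0 \otimes b_1\rangle = \langle a_1,\ \rho_1(\langle \h{a}_0,\h{b}_0\rangle_{\K_0})\, b_1\rangle_{\K_1} = a_1^{*}\, r_{e_1}(s_{e_1}^{-1}\circ\EE_{e_1}^{s}(a_0^{*}b_0))\, b_1,
\end{equation*}
where the last equality uses that $\rho_1 = \pi_{e_2}^{s}\circ r_{e_1}$ is just left multiplication by $r_{e_1}(\,\cdot\,)$ in the GNS representation, and that $\K_1$ (whether equal to $\HH_{e_2}^{s}$ or to $(\HH_{e_2}^{s})^{\circ}$ or to $A_{r(e_1)}$ when $n=1$) carries the inner product $\langle \h{x},\h{y}\rangle = s_{e_2}^{-1}\circ\EE_{e_2}^{s}(x^{*}y)$ (or just $x^{*}y$ in the terminal case).

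\textbf{Inductive step.} For general $n$, I would split $\HH_w = \HH_{w'} \otimes_{\rho_n} \K_n$ with $w' = (e_1,\dots,e_{n-1})$ and use the formula
\begin{equation*}
\langle \alpha \otimes a_n,\ \beta \otimes b_n\rangle_{\HH_w} = \langle a_n,\ \rho_n(\langle \alpha,\beta\rangle_{\HH_{w'}})\, b_n\rangle_{\K_n}
= a_n^{*}\, r_{e_n}(\langle \alpha,\beta\rangle_{\HH_{w'}})\, b_n.
\end{equation*}
By the induction hypothesis, $\langle \alpha,\beta\rangle_{\HH_{w'}} = s_{e_{n-1}}^{-1}\circ\EE_{e_{n-1}}^{s}(x_{n-2})\cdot\text{(something)}$---more precisely, one must apply the inductive formula at level $n-1$ and then compose with the action $\rho_n$ to get $x_n$, reconstructing exactly the recursion defining $x_n$. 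The bookkeeping is straightforward once one observes that each application of $\rho_k$ contributes the factor $r_{e_k}\circ s_{e_k}^{-1}\circ\EE_{e_k}^{s}$ sandwiched between $a_k^{*}$ (from the left entry) and $b_k$ (from the right entry).

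\textbf{Expected obstacle.} The only subtle point is making sure that, in the case $e_{i+1} = \rev{e}_i$ where $\K_i = (\HH_{e_{i+1}}^{s})^{\circ}$, the element $\rho_i(y_{i-1})\h{b}_i = \widehat{r_{e_i}(y_{i-1})\,b_i}$ still lies in $(\HH_{e_{i+1}}^{s})^{\circ}$, so that the iterated inner product inside the smaller submodule agrees with the one inside the ambient $\HH_{e_{i+1}}^{s}$ and no hidden contribution is lost. This is fine because in that case $r_{e_i} = s_{e_{i+1}}$, hence $r_{e_i}(y_{i-1}) \in B_{e_{i+1}}^{s}$, and $\EE_{e_{i+1}}^{s}$ is a $B_{e_{i+1}}^{s}$-bimodule map, so $\EE_{e_{i+1}}^{s}(r_{e_i}(y_{i-1})b_i) = r_{e_i}(y_{i-1})\,\EE_{e_{i+1}}^{s}(b_i) = 0$ whenever $\h{b}_i \in (\HH_{e_{i+1}}^{s})^{\circ}$. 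Once this compatibility is noted, the induction runs without any further difficulty, and the formula $\langle a,b\rangle_{\HH_w} = x_n$ drops out.
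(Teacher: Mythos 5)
Your proof is correct and follows essentially the same route as the paper's: induction on the length of the path, peeling off the last tensor factor and using the definition of the interior tensor product inner product, with each level contributing the factor $r_{e_k}\circ s_{e_k}^{-1}\circ\EE_{e_k}^{s}$ sandwiched between $a_k^{*}$ and $b_k$. The only point worth tightening is that in your displayed inductive step the module $\HH_{w'}$ should be the first $n$ factors of $\HH_w$ viewed as a Hilbert $B_{e_n}$-module (whose inner product is $s_{e_n}^{-1}\circ\EE_{e_n}^{s}$ applied to the truncated-path inner product), not the path module of $(e_1,\dots,e_{n-1})$ itself --- a distinction the paper makes explicit and which your bookkeeping remark implicitly accounts for.
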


\begin{proof}
The proof is by induction on $n$. For $n=1$, we have $w=(e)$ where $e\in\E(\Gr)$, $a=\widehat{a}_0\underset{e}{\ot} a_1$ and $b=\widehat{b}_0\underset{e}{\ot} b_1$, where $a_0,b_0\in A_{s(e)}$ and $a_1,b_1\in A_{r(e)}$. By definition of $\mathcal{H}_{w}$ we have
\begin{equation*}
\langle a,b\rangle_{\HH_w}=\langle a_1,\rho_1(s_e^{-1}\circ\EE_e^s(a_0^*b_0))b_1\rangle_{A_{r(e)}}=a_1^*(r_e\circ s_e^{-1}\circ\EE_e^s(a_0^*b_0))b_1=x_1.
\end{equation*}
Assume that the formula holds for a given $n\geq 1$. Let $w=(e_1,\ldots e_{n+1})$ be a path and fix $a = \h{a}_{0}\otimes \dots \otimes a_{n+1}, b = \h{b}_{0}\otimes \dots \otimes b_{n+1}\in\HH_w$. Write
$$\HH_w=\K_{0}\underset{\rho_1}{\otimes} \dots \underset{\rho_{n+1}}{\otimes} \K_{n+1}=\HH_w'\underset{\rho_{n+1}}{\ot} A_{r(e_{n+1})},$$
where $\HH_w'$ is the Hilbert $B_{e_{n+1}}$-module $\HH_w'=\K_{0}\underset{\rho_1}{\otimes} \dots \underset{\rho_n}{\otimes} \K_{n}$. We have,
$$\langle a,b\rangle_{\HH_w}=a_{n+1}^*r_{e_{n+1}}\left(\langle\widehat{a}_0\ot\ldots\ot a_n, \widehat{b}_0\ot\ldots\ot\widehat{b}_n\rangle_{\HH_w'}\right)b_{n+1}.$$
By definition of the inner product, we get, with $w'=(e_1,\ldots,e_n)$,
$$\langle\widehat{a}_0\ot\ldots\ot a_n, \widehat{b}_0\ot\ldots\ot\widehat{b}_n\rangle_{\HH_w'}
=s_{e_{n+1}}^{-1}\circ\EE_{e_{n+1}}^s\left( \langle\widehat{a}_0\ot\ldots\ot a_n, \widehat{b}_0\ot\ldots\ot b_n\rangle_{\HH_{w'}}\right).$$
This concludes the proof using the induction hypothesis.
\end{proof}

\noindent For any two vertices $p_{0}, q\in \V(\Gr)$, we define a right Hilbert $A_q$-module
\begin{equation*}
\mathcal{H}_{p_{0},q}=\bigoplus_{w}\HH_{w}
\end{equation*}
where the sum runs over all paths $w$ in $\Gr$ connecting $p_{0}$ with $q$. By convention, when $q=p_0$, the sum also runs over the empty path, where $\HH_{\emptyset} = A_{p_{0}}$ with its canonical Hilbert $(A_{p_{0}}, A_{p_{0}})$-bimodule structure. We equip this Hilbert C*-module with the faithful left action of $A_{p_{0}}$ which is given by the sum of its left actions on every $\HH_{w}$.

%%%%%%%%%%%%%%%%%%%%%%%%%%%%%%%
\subsubsection{The C*-algebra}
%%%%%%%%%%%%%%%%%%%%%%%%%%%%%%%%

For every $e\in\E(\Gr)$ and $p\in\V(\Gr)$, we can define an operator
\begin{equation*}
u_{e}^{p} : \HH_{r(e),p} \rightarrow \HH_{s(e),p}
\end{equation*}
which "adds the edge $e$ on the left". To construct this operator, let $w$ be a path in $\Gr$ from $r(e)$ to $p$ and let $\xi\in \HH_w$.
\begin{itemize}
\item If $p = r(e)$ and $w$ is the empty path, then $u_{e}^{p}(\xi) = \xi_{e}^{s}\otimes\xi \in \HH_{(e)}$.
\item If $n = 1$, $w = (e_{1})$, $\xi = \h{a}\otimes\xi'$ with $a\in A_{s(e_{1})}$ and $\xi'\in A_{p}$, then
\begin{itemize}
\item If $e_{1} \neq \rev{e}$, $u_{e}^{p}(\xi) = \xi_{e}^{s}\otimes\xi \in \HH_{(e, e_{1})}$.
\item If $e_{1} = \rev{e}$, $u_{e}^{p}(\xi) =
\left\{\begin{array}{cccc}
\xi_{e}^{s}\otimes\xi & \in \HH_{(e, e_{1})} & \text{if} & \h{a}\in (\HH_{e_{1}}^{s})^{\circ}, \\
r_{e_{1}}\circ s_{e_{1}}^{-1}(a)\xi' & \in A_{p} & \text{if} & a\in B^{s}_{e_{1}}.
\end{array}\right.$
\end{itemize}
\item If $n\geqslant 2$, $w = (e_{1}, \dots, e_{n})$, $\xi = \h{a}\otimes\xi'$ with $a\in A_{s(e_{1})}$ and $\xi' \in \K_{1} \underset{\rho_{2}}{\otimes} \dots \underset{\rho_{n}}{\otimes} \K_{n}$, then
\begin{itemize}
\item If $e_{1} \neq \rev{e}$, $u_{e}^{p}(\xi) = \xi_{e}^{s}\otimes\xi \in \HH_{(e, e_{1}, \dots, e_{n})}$.
\item If $e_{1} = \rev{e}$, $u_{e}^{p}(\xi) =
\left\{\begin{array}{cccc}
\xi_{e}^{s}\otimes\xi & \in \HH_{(e, e_{1}, \dots, e_{n})} & \text{if} & \h{a}\in (\HH_{e}^{r})^{\circ}, \\
r_{e_{1}}\circ s_{e_{1}}^{-1}(a)\xi' & \in \HH_{(e_{2}, \dots, e_{n})} & \text{if} & a \in B_{e_{1}}^{s}.\end{array}\right.$
\end{itemize}
\end{itemize}
One easily checks that the operators $u_{e}^{p}$ commute with the right actions of $A_{p}$ on $\HH_{r(e), p}$ and $\HH_{s(e), p}$ and extend to unitary operators (still denoted $u_{e}^{p}$) in $\mathcal{L}_{A_p}(\HH_{r(e),p},\HH_{s(e),p})$ such that $(u_{e}^{p})^{*} = u_{\rev{e}}^{p}$. Moreover, for every $e\in \E(\Gr)$ and every $b\in B_{e}$, the definition implies that
\begin{equation*}
u_{\rev{e}}^{p}s_{e}(b)u_{e}^{p} = r_{e}(b)
\end{equation*}
as operators in $\LL_{A_{p}}(\HH_{r(e),p})$. Let $w=(e_1,\ldots, e_n)$ be a path in $\Gr$ and let $p\in\V(\Gr)$, we set
\begin{equation*}
u^{p}_{w} = u_{e_{1}}^p \dots u_{e_{n}}^{p} \in \LL_{A_{p}}(\HH_{r(e_{n}),p},\HH_{s(e_{1}),p}).
\end{equation*}
We are now ready to define the reduced fundamental C*-algebra.

\begin{de}
Let $(\Gr, (A_{q})_{q}, (B_{e})_{e})$ be a graph of C*-algebras and let $p, p_{0}\in \V(\Gr)$. The \emph{reduced fundamental C*-algebra rooted in $p_{0}$ with base $p$} is the C*-algebra
$$
\pi^{p}_{1}(\Gr, (A_{q})_{q}, (B_{e})_{e}, p_{0}) =  \left\langle (u^{p}_{z})^{*}A_{q}u^{p}_{w} \vert q \in \V(\Gr),w,z \text{ paths from $q$ to $p_0$ }\right\rangle
\subset \LL_{A_{p}}(\HH_{p_{0},p}).
$$
If the root $p_0$ is equal to the base $p$, we will simply call it the \emph{reduced fundamental C*-algebra in $p_0$}. We will use the shorthand notation $P_{p}(p_{0})$ (and $P(p_{0})$ when $p = p_{0}$) to denote the reduced fundamental C*-algebra in the sequel.
\end{de}

\begin{rem}
The above definition may seem unsatisfying because of the two arbitrary vertices involved. However, this will give many natural representations of the reduced C*-algebra which will be needed later on. This also gives a more tractable object when it turns to making products or computing norms.
\end{rem}

\begin{rem}\label{rem:basepoint}
Because the graph $\Gr$ is connected, the previous construction does not really depend on $p_{0}$. In fact, let $p, p_{0}, q_{0}$ be three vertices of $\Gr$ and let $\T$ be a maximal subtree in $\Gr$. If $g_{q_{0} p_{0}}$ denotes the unique geodesic path in $\T$ from $q_{0}$ to $p_{0}$, then we have an isomorphism
\begin{equation*}
\Phi^{p}_{\T, p_{0}, q_{0}} : P_{p}(p_{0}) \rightarrow P_{p}(q_{0})
\end{equation*}
which is given by
\begin{equation*}
x \mapsto u^{p}_{g_{q_{0}p_{0}}}x(u^{p}_{g_{q_{0}p_{0}}})^{*}.
\end{equation*}
Note, however, that there is no truly canonical way to identify these C*-algebras.
\end{rem}

\begin{ex}
Carrying out this construction with the graphs of Examples \ref{ex:freeproduct} and \ref{ex:hnn}, one recovers respectively the reduced amalgamated free product construction of \cite{voiculescu85symmetries} and the reduced HNN extension construction of \cite[Sec 7.4]{ueda2005hnn}.
\end{ex}

%%%%%%%%%%%%%%%%%%%%%%%%%%%%%%%%%%%%
\subsubsection{The quotient map}\label{quotient}
%%%%%%%%%%%%%%%%%%%%%%%%%%%%%%%%%%%%%

We now investigate the link beteween the reduced fundamental C*-algebra and the maximal one. From now on, we fix two vertices $p_{0}, p\in \V(\Gr)$ and consider the C*-algebra $P_{p}(p_{0})$. Let $\mathcal{T}$ be a maximal subtree in $\Gr$. As before, given a vertex $q\in \V(\Gr)$, we denote by $g_{qp}$ the unique geodesic path in $\mathcal{T}$ from $q$ to $p$. For every $e\in \E(\Gr)$, we define a unitary operator $w^{p}_{e}\in P_p(p_0)$ by
\begin{equation*}
w^{p}_{e} = (u^{p}_{g_{s(e)p}})^{*}u^{p}_{(e, g_{r(e)p})}.
\end{equation*}
\noindent For every $q\in\V(\Gr)$, we define a unital faithful $*$-homomorphism $\pi^{p}_{q, p_{0}}: A_{q} \rightarrow P_{p}(p_{0})$ by
\begin{equation*}
\pi^{p}_{q, p_{0}}(a) = (u^{p}_{g_{qp_{0}}})^{*}a u^{p}_{g_{qp_{0}}}
\end{equation*}
Observe that the following relations hold:
\begin{itemize}
\item $w^{p}_{\rev{e}} = (w^{p}_{e})^{*}$ for every $e\in\E(\Gr)$,
\item $w^{p}_{e} = 1$ for every $e\in \E(\T)$,
\item $w_{\rev{e}}\pi^{p}_{s(e), p_{0}}(s_e(b))w_{e} = \pi^{p}_{r(e), p_{0}}(r_e(b))$ for every $e\in \E(\Gr)$, $b\in B_{e}$.
\end{itemize}
\noindent The first and the last relations are clear from the definitions. To check the second one, observe that if $e\in \E(\T)$, then the path $(\rev{g}_{s(e)p}, e, g_{r(e)p})$ is a cycle in $\mathcal{T}$. This means that either $g_{r(e)p} = (\rev{e}, g_{s(e)p})$ or $g_{s(e)p} = (e,g_{r(e)p})$. In both cases we get $w^{p}_{e} = 1$.

\vspace{0.2cm}

\noindent Thus, we can apply the universal property of Proposition \ref{prop:universal} to get a surjective $*$-homomorphism
\begin{equation*}
\lambda_{p, p_{0}}^{\T}: \pi_{1}^{\text{max}}(\Gr, (A_{q})_{q}, (B_{e})_{e}, \T) \rightarrow P_{p}(p_{0}).
\end{equation*}

%%%%%%%%%%%%%%%%%%%%%%%%%%%%%%%%%%%%
\subsubsection{Reduced operators}
%%%%%%%%%%%%%%%%%%%%%%%%%%%%%%%%%%%%%

\noindent Like in the case of groups, we have a notion of reduced element.

\begin{de}
Let $(\Gr, (A_{q})_{q}, (B_{e})_{e})$ be a graph of C*-algebras and let $p_{0},p,q\in \V(\Gr)$. Let $a\in\mathcal{L}_{A_p}(\mathcal{H}_{q,p},\mathcal{H}_{p_0,p})$ be of the form $a = a_{0}u^{p}_{e_{1}} \dots u^{p}_{e_{n}}a_{n}$, where $w = (e_{1}, \dots,  e_{n})$ is a path in $\Gr$ from $p_{0}$ to $q$, $a_{0}\in A_{p_{0}}$ and, for $1\leqslant i\leqslant n$, $a_{i}\in A_{r(e_{i})}$. The operator $a$ is said to be \emph{reduced} (from $p_0$ to $q$) if for all $1\leqslant i\leqslant n-1$ such that $e_{i+1} = \rev{e}_{i}$, we have $\EE_{e_{i+1}}^{s}(a_{i}) = 0$.
\end{de}

\begin{rem}
Let $w=(e_{1}, \dots,  e_{n})$ be a path from $p_0$ to $p_0$. Observe that any reduced operator of the form $a = a_{0}u^{p}_{e_{1}} \dots u^{p}_{e_{n}}a_{n}$ is in $P_p(p_0)$ and that the linear span $\mathcal{R}_p(p_0)$ of $A_{p_0}$ and the reduced operators from $p_0$ to $p_0$ is a dense $*$-subalgebra of $P_p(p_0)$. Indeed, we can write $a=x_0x_1\ldots x_n$ where $x_0=a_0u^p_{(e_1,\overline{e}_1)}\in P_p(p_0)$, $x_n=u^p_{w}a_n\in P_p(p_0)$ and, for $1\leq i\leq n-1$, $x_i=u^p_{(e_1,\ldots,e_i)}a_iu^p_{(\overline{e}_i,\ldots,\overline{e}_1)}\in P_p(p_0)$. This shows that $a\in P_p(p_0)$. Now, using the relations $u^{p}_{\rev{e}}s_{e}(b)u^{p}_{e} = r_{e}(b)$ for $e\in\E(\Gr)$ and $b\in B_e$, we see that $\R_{p}(p_{0})$ is a $*$-subalgebra of $P_{p}(p_{0})$. To show that $\R_{p}(p_{0})$ is dense it suffices to show that it contains all operators of the form $(u^{p}_{z})^{*}au^{p}_{w}$, where $a\in A_q$ and $z,w$ are paths from $q$ to $p_0$. One can easily check this by induction and using the relations  $u^{p}_{\rev{e}}s_{e}(b)u^{p}_{e} = r_{e}(b)$.
\end{rem}

\begin{rem}\label{rem:redmax}
The notion of reduced operator also makes sense in the maximal fundamental C*-algebra (if we assume the existence of conditional expectations) and the linear span of $A_{p_{0}}$ and the reduced operators from $p_0$ to $p_0$ is the $*$-algebra $\mathcal{A}$ introduced in Remark \ref{rem:pathmax}, which is dense in the maximal fundamental C*-algebra.
\end{rem}

\noindent In order to simplify later computations, we now give an explicit formula for the action of a reduced operator on the Hilbert C*-module $\HH_{p_{0}, p}$ which will be used several times in the sequel. For every edge $e\in \E(\Gr)$ and every $x\in A_{r(e)}$, we set $\PP_{e}^{r}(x) = x - \EE_{e}^{r}(x)$. For the sake of simplicity, if $w = (f_{1}, \dots, f_{n})$ is a path in $\Gr$, we will use the notation $\widehat{b}_{0} \underset{f_{1}}{\otimes} \dots \underset{f_{m}}{\otimes}b_{m}$ to denote a typical element in $\HH_{w}$.

\begin{lem}\label{lem:product}
Let $(\Gr, (A_{q})_{q}, (B_{e})_{e})$ be a graph of C*-algebras and let $p_{0}, p_{1}, p_{2} \in \V(\Gr)$. Let $w=(e_{n}, \dots, e_{1})$ be a path from $p_{0}$ to $p_{1}$ and let $\mu = (f_{1}, \dots, f_{m})$ be path from $p_{1}$ to $p_{2}$. Set
\begin{equation*}
n_{0} = \max\{1\leqslant k\leqslant\min(n, m)\vert e_{i} = \rev{f}_{i}, \forall i\leqslant k\}.
\end{equation*}
If the above set is empty, set $n_{0} = 0$. Let $a = a_{n} u^{p_2}_{e_{n}} \dots u^{p_2}_{e_{1}}a_{0}$ be a reduced operator and let $b = \widehat{b}_{0} \underset{f_{1}}{\otimes} \dots \underset{f_{m}}{\otimes}b_{m}\in \HH_{p_{1}, p_{2}}$. Set $x_{0} = a_{0}b_{0}$ and, for $1\leqslant k\leqslant n_{0}$, set
$$
x_{k} = a_{k}(s_{e_{k}}\circ r_{e_{k}}^{-1}\circ \EE_{e_{k}}^r(x_{k-1}))b_{k}\quad\text{and}\quad y_{k} =\PP_{e_k}^{r}(x_{k-1}).$$
Then, the following holds :
\begin{enumerate}

\item If $n_{0} = 0$, then $a.b = \h{a}_{n}\underset{e_{n}}{\otimes} \dots \underset{e_{1}}{\otimes}\h{x}_{0}\underset{f_{1}}{\otimes} \dots \underset{f_{m}}{\otimes}b_{m}$.

\item If $n_0=n=m$, then $a.b=\sum_{k=1}^n\h{a}_{n}\underset{e_{n}}{\otimes} \dots \underset{e_{k}}{\otimes}\widehat{y}_{k}\underset{f_{k}}{\otimes} \dots \underset{f_{n}}{\otimes}b_{n}+x_n.$

\item If $n_{0} = n<m$, then $a.b = \sum_{k = 1}^{n}\h{a}_{n}\underset{e_{n}}{\otimes} \dots \underset{e_{k}}{\otimes}\widehat{y}_{k}\underset{f_{k}}{\otimes} \dots \underset{f_{m}}{\otimes}b_{m} + \h{x}_{n}\underset{f_{n+1}}{\otimes} \dots \underset{f_{m}}{\otimes}b_{m}$.

\item If $n_0=m<n$, then $a.b = \sum_{k = 1}^{m}\h{a}_{n}\underset{e_{n}}{\otimes} \dots \underset{e_{k}}{\otimes}\widehat{y}_{k}\underset{f_{k}}{\otimes} \dots \underset{f_{m}}{\otimes}b_{m} + \h{a}_{n} \underset{e_{n}}{\otimes} \dots \underset{e_{m+1}}{\otimes} x_{m}$.

\item If $1\leqslant n_0<\min\{n,m\}$, then
$$a.b = \sum_{k = 1}^{n_{0}}\h{a}_{n}\underset{e_{n}}{\otimes} \dots \underset{e_{k}}{\otimes}\widehat{y}_{k}\underset{f_{k}}{\otimes} \dots \underset{f_{m}}{\otimes}b_{m} + \h{a}_{n} \underset{e_{n}}{\otimes} \dots \underset{e_{n_{0}+1}}{\otimes} \h{x}_{n_{0}}\underset{f_{n_{0}+1}}{\otimes} \dots \underset{f_{m}}{\otimes}b_{m}.$$
\end{enumerate}
\end{lem}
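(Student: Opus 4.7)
The plan is to proceed by induction on $n$, the length of the path $(e_n, \ldots, e_1)$.

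For the base case $n = 1$, we have $a = a_1 u^{p_2}_{e_1} a_0$. Left-multiplication by $a_0$ absorbs into the leading tensor factor of $b$, producing $\widehat{x}_0 \underset{f_1}{\otimes} \cdots \underset{f_m}{\otimes} b_m$ (or simply $x_0 \in A_{p_1}$ if $m=0$), where $x_0 = a_0 b_0$. We then apply $u^{p_2}_{e_1}$ and split on whether $e_1 = \rev{f}_1$. If $e_1 \neq \rev{f}_1$ (so $n_0=0$), the definition of $u^p_e$ prepends $\xi^s_{e_1}$ freely, and subsequent left-multiplication by $a_1$ yields case (1). If $e_1 = \rev{f}_1$ (so $n_0=1$), we use the identities $r_{e_1} = s_{\rev{e}_1} = s_{f_1}$ and $B^r_{e_1} = B^s_{f_1}$ together with the decomposition $x_0 = \EE^r_{e_1}(x_0) + y_1$: the component $\EE^r_{e_1}(x_0) \in B^s_{f_1}$ lies in the cancellation branch of the definition of $u^p_e$ and, after absorption of $a_1$ and $b_1$, produces the $x_1$-term, while $\widehat{y}_1 \in (\HH^s_{f_1})^{\circ} = (\HH^r_{e_1})^{\circ}$ lies in the free-prepending branch and produces the term $\widehat{a}_1 \underset{e_1}{\otimes} \widehat{y}_1 \underset{f_1}{\otimes} \cdots$. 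Cases (2) and (3) then follow according to whether $m = 1$ or $m \geq 2$.

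For the inductive step, set $\tilde a = a_{n-1} u^{p_2}_{e_{n-1}} \cdots u^{p_2}_{e_1} a_0$, which is still a reduced operator on the truncated path $(e_{n-1}, \ldots, e_1)$ since reducedness is a condition on consecutive coefficients only. Let $\tilde n_0 = \min(n_0, n-1)$ be the corresponding cancellation length; the induction hypothesis yields an explicit formula for $\tilde a \cdot b$, and it remains to act on each summand by $a_n u^{p_2}_{e_n}$. We distinguish two cases. If $n_0 \leq n-1$, every summand of $\tilde a \cdot b$ has top tensor factor $\widehat{a}_{n-1} \in \HH^s_{e_{n-1}}$ (or is a fully-contracted scalar $x_{n-1} \in A_{p_2}$ when the induction hypothesis is in its own case (2), which happens precisely when $m = n-1$), and $u^{p_2}_{e_n}$ prepends $\xi^s_{e_n}$ freely: if $e_n \neq \rev{e}_{n-1}$ this is immediate from the definition, and if $e_n = \rev{e}_{n-1}$ then reducedness of $a$ gives $\EE^s_{e_{n-1}}(a_{n-1})=0$, placing $\widehat{a}_{n-1}$ in $(\HH^s_{e_{n-1}})^{\circ} = (\HH^r_{e_n})^{\circ}$ so that the cancellation branch is excluded. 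Subsequent left-multiplication by $a_n$ then prepends $\widehat{a}_n \underset{e_n}{\otimes}$ to every summand, which yields cases (1), (4) or (5) of the lemma according to the relative sizes of $n_0$ and $m$. If instead $n_0 = n$, then $\tilde n_0 = n-1 < m$ and the induction hypothesis is its own case (3); in addition to the free-prepended summands just discussed, it contributes the summand $\widehat{x}_{n-1} \underset{f_n}{\otimes} \cdots \underset{f_m}{\otimes} b_m$ with top factor $\widehat{x}_{n-1} \in \HH^s_{f_n}$. Since $e_n = \rev{f}_n$, applying $u^{p_2}_{e_n}$ here triggers the decomposition $x_{n-1} = \EE^r_{e_n}(x_{n-1}) + y_n$ exactly as in the base case, producing simultaneously the additional $\widehat{a}_n \underset{e_n}{\otimes} \widehat{y}_n$-summand and the $x_n$-summand required to complete cases (2) or (3) of the lemma.

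The main obstacle is the combinatorial bookkeeping across the five cases. The key algebraic insight is that reducedness of $a$ is precisely what ensures that $u^{p_2}_{e_n}$ never triggers an additional cancellation on any summand inherited from the inductive hypothesis; new cancellations occur only at the junction between the $e$-path and the $f$-path, and are governed entirely by the decompositions $x_{k-1} = \EE^r_{e_k}(x_{k-1}) + y_k$. Once this is observed, the proof reduces to a systematic application of the two branches of the definition of $u^p_e$ together with the identifications $r_e = s_{\rev{e}}$ and $B^r_e = B^s_{\rev{e}}$.
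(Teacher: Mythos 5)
Your proof is correct, but it is organized differently from the paper's. The paper proves case $(1)$ directly and then establishes case $(5)$ (declaring $(2)$--$(4)$ analogous) by induction on the cancellation length $n_{0}$: it applies the innermost unitary $u_{e_{1}}$ first, splits $x_{0}=\EE_{e_{1}}^{r}(x_{0})+y_{1}$ at the junction, extracts one fully reduced summand, and passes to the shorter pair $a'=a_{n}u_{e_{n}}\dots u_{e_{2}}$, $b'=\h{x}_{1}\ot\dots\ot b_{m}$, so the induction peels cancellations from the inside out and each step involves only one application of the two branches of the definition of $u_{e}$. You instead induct on the word length $n$, peeling $a_{n}u_{e_{n}}$ from the outside and acting on the output of the induction hypothesis; this treats all five cases in a single unified induction and makes explicit the key role of reducedness (it is exactly what forces the free-prepending branch on every inherited summand), at the price of a heavier case analysis since the inductive hypothesis may land in any of the five cases. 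One small bookkeeping slip in your step for $n_{0}\leqslant n-1$: when $n_{0}=n-1<m$ the induction hypothesis is in its case $(3)$ and also contributes the summand $\h{x}_{n-1}\underset{f_{n}}{\ot}\dots\underset{f_{m}}{\ot}b_{m}$, whose top factor is neither $\h{a}_{n-1}$ nor a scalar; this does not affect the conclusion, since $n_{0}<n$ forces $e_{n}\neq\rev{f}_{n}$ and hence free prepending on that summand as well, consistent with your stated principle that new cancellations occur only at the junction. Both arguments rest on the same two mechanisms (the orthogonal decomposition $x_{k-1}=\EE_{e_{k}}^{r}(x_{k-1})+y_{k}$ and the exclusion of the cancellation branch via $\EE_{e_{k}}^{s}(a_{k})=0$), so the difference is one of induction scheme rather than of substance.
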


\begin{proof}
To simplify the notations during the proof, we omit the superscript $p_2$.

\vspace{0.2cm}

\noindent $(1)$. If $n_0=0$ then $e_1\neq\overline{f}_1$ and we get, by definition of the $u_e$ and because $a$ is reduced, $a.b=a_nu_{e_n}\dots u_{e_1}.\widehat{x}_0\underset{f_1}{\ot}\dots\underset{f_m}{\ot} b_m
=\widehat{a}_n\underset{e_n}{\ot}\dots\underset{e_1}{\ot}\widehat{x}_0\underset{f_1}{\ot}\dots\underset{f_m}{\ot} b_m$. This proves $(1)$. Since the proof of the other cases are all the same we only prove $(5)$.

\vspace{0.2cm}

\noindent $(5)$. We need to show the following statement: for every $n_0\geqslant 1$, for every $n,m>n_0$, for every reduced operator $a=a_nu_{e_n}\dots u_{e_1}a_0$ from $p_0$ to $p_1$ and for  $b\in\mathcal{H}_{p_1,p_2}$ of the form $b=\widehat{b}_0\underset{\overline{e}_1}{\ot}\dots\underset{\overline{e}_{n_0}}{\ot}\widehat{b}_{n_0}\underset{f_{n_0+1}}{\ot}\dots\underset{f_m}{\ot}b_m$ with $f_{n_0+1}\neq \overline{e}_{n_0+1}$ we have
$$a.b = \sum_{k = 1}^{n_{0}}\h{a}_{n}\underset{e_{n}}{\otimes} \dots \underset{e_{k}}{\otimes}\widehat{y}_{k}\underset{\overline{e}_{k}}{\otimes} \dots\underset{\overline{e}_{n_0}}{\ot}\widehat{b}_{n_0}\underset{f_{n_0+1}}{\ot}\dots \underset{f_{m}}{\otimes}b_{m} + \h{a}_{n} \underset{e_{n}}{\otimes} \dots \underset{e_{n_{0}+1}}{\otimes} \h{x}_{n_{0}}\underset{f_{n_{0}+1}}{\otimes} \dots \underset{f_{m}}{\otimes}b_{m}.$$
We prove it by induction on $n_0$. If $n_0=1$, let $n,m>1$, let $a=a_nu_{e_n}\dots u_{e_1}a_0$ and let $b=\widehat{b}_0\underset{\overline{e}_1}{\ot}\widehat{b}_1\underset{f_2}{\ot}\dots\underset{f_m}{\ot}b_m$ with $f_{2}\neq \overline{e}_{2}$. Since $a$ is reduced and $f_2\neq\overline{e}_2$, we have
\begin{eqnarray*}
a.b&=&a_{n}u_{e_{n}}\dots a_1u_{e_1}.\widehat{x}_0\underset{\overline{e}_1}{\ot}\widehat{b}_1\underset{f_2}{\ot}\dots\underset{f_{m}}{\ot} b_{m}\\
&=&a_{n}u_{e_{n}}\dots a_1u_{e_1}.\widehat{\EE_{e_1}^r(x_0)}\underset{\overline{e}_1}{\ot}\widehat{b}_1\underset{f_2}{\ot}\dots\underset{f_{m}}{\ot} b_{m}
+a_{n}u_{e_{n}}\dots a_1u_{e_1}.\widehat{y}_1\underset{\overline{e}_1}{\ot}\widehat{b}_1\underset{f_2}{\ot}\dots\underset{f_{m}}{\ot} b_{m}\\
&=&a_{n}u_{e_{n}}\dots u_{e_2}.\widehat{x}_1\underset{f_2}{\ot}\dots\underset{f_{m}}{\ot} b_{m}+\widehat{a}_{n}\underset{e_{n}}{\ot}\dots\underset{e_{1}}{\ot} \widehat{y}_1\underset{\overline{e}_1}{\ot} \widehat{b}_1\underset{f_2}{\ot}\dots\underset{f_{m}}{\ot} b_{m}\\
&=&\h{a}_{n}\underset{e_{n}}{\otimes} \dots\underset{e_{2}}{\otimes}\widehat{x}_1\underset{f_2}{\ot}\dots\underset{f_{m}}{\ot} b_{m}+\widehat{a}_{n}\underset{e_{n}}{\ot}\dots\underset{e_{1}}{\ot} \widehat{y}_1\underset{\overline{e}_1}{\ot} \widehat{b}_1\underset{f_2}{\ot}\dots\underset{f_{m}}{\ot} b_{m},
\end{eqnarray*}
Assume that the statement holds for a given $n_0\geqslant 1$. Let $n,m> n_0+1$, let $a=a_nu_{e_n}\dots u_{e_1}a_0$ and let $b=\widehat{b}_0\underset{\overline{e}_1}{\ot}\dots\underset{\overline{e}_{n_0+1}}{\ot}\widehat{b}_{n_0+1}\underset{f_{n_0+2}}{\ot}\dots\underset{f_m}{\ot}b_m$ with $f_{n_0+2}\neq \overline{e}_{n_0+2}$. We have
\begin{eqnarray*}
a.b&=&a_{n}u_{e_{n}}\dots u_{e_1}.\widehat{x}_0\underset{\overline{e}_1}{\ot}\dots\underset{\overline{e}_{n_0+1}}{\ot}\widehat{b}_{n_0+1}\underset{f_{n_0+2}}{\ot}\dots\underset{f_{m}}{\ot} b_{m}\\
&=&a_{n}u_{e_{n}}\dots u_{e_2}.\widehat{x}_1\underset{\overline{e}_2}{\ot}\dots\underset{\overline{e}_{n_0+1}}{\ot}\widehat{b}_{n_0+1}\underset{f_{n_0+2}}{\ot}\dots\underset{f_{m}}{\ot} b_{m}\\
&+&
\widehat{a}_{n}\underset{e_{n}}{\ot}\dots\underset{e_{1}}{\ot} \widehat{y}_1\underset{\overline{e}_1}{\ot}\dots\underset{\overline{e}_{n_0+1}}{\ot}\widehat{b}_{n_0+1}\underset{f_{n_0+2}}{\ot}\dots\underset{f_{m}}{\ot} b_{m}\\
&=&a'.b'+\widehat{a}_{n}\underset{e_{n}}{\ot}\dots\underset{e_{1}}{\ot} \widehat{y}_1\underset{\overline{e}_1}{\ot}\dots\underset{\overline{e}_{n_0+1}}{\ot}\widehat{b}_{n_0+1}\underset{f_{n_0+2}}{\ot}\dots\underset{f_{m}}{\ot} b_{m},
\end{eqnarray*}

\noindent where $a'=a_nu_{e_n}\dots u_{e_2}$ and $b'=\widehat{x}_1\underset{\overline{e}_2}{\ot}\dots\underset{\overline{e}_{n_0+1}}{\ot}\widehat{b}_{n_0+1}\underset{f_{n_0+2}}{\ot}\dots\underset{f_{m}}{\ot} b_{m}$. We can now apply the induction hypothesis to the pair $(a',b')$. This concludes the proof.
\end{proof}

\noindent If a maximal subtree $\T$ of $\Gr$ is fixed and if $p, q\in \V(\Gr)$, then $g_{pq}$ will denote the unique geodesic path in $\T$ from $p$ to $q$. Viewing $1_{A_{p}}$, the unit of $A_p$ as an element of $\HH_{\emptyset} \subset \HH_{p, p}$, we set
\begin{equation*}
\Omega_{p}(p_{0}) = u_{g_{p_{0}p}}^{p}(1_{A_{p}}) \in \HH_{g_{p_{0}p}} \subset \HH_{p_{0},p}.
\end{equation*}
The vector $\Omega_{p}(p_{0})$ is a cyclic vector for $P_{p}(p_{0})$.

\begin{prop}\label{prop:PquasiGNS}
With the previous notations, we have
\begin{enumerate}
\item $\rev{P_{p}(p_{0})\Omega_{p}(p_{0})}=\HH_{p_{0},p}$
\item For any reduced operator $a\in P_p(p_0)$, $\langle\Omega_{p}(p_{0}), a.\Omega_{p}(p_{0})\rangle_{\HH_{p_{0},p}} = 0$.
\end{enumerate}
\end{prop}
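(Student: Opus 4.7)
For the first assertion, the strategy is to show that every reduced elementary tensor in each path summand $\HH_v \subset \HH_{p_0,p}$ (with $v$ a path from $p_0$ to $p$) lies in $\overline{P_p(p_0)\Omega_p(p_0)}$. Since such tensors span $\HH_v$ densely and $\HH_{p_0,p}$ decomposes as the orthogonal direct sum of the $\HH_v$, this gives density. Given such a tensor $\xi = \widehat{a}_0 \underset{f_1}{\otimes} \cdots \underset{f_m}{\otimes} a_m \in \HH_v$ with $v=(f_1,\dots,f_m)$, I would consider
\[
T_\xi = \pi^p_{p_0,p_0}(a_0)\, w^p_{f_1}\, \pi^p_{r(f_1),p_0}(a_1)\, w^p_{f_2}\cdots w^p_{f_m}\, \pi^p_{r(f_m),p_0}(a_m) \in P_p(p_0),
\]
a product of the generators introduced in the quotient-map paragraph. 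Expanding each factor, the geodesic unitaries in consecutive $\pi^p$ and $w^p$ pieces telescope and the product simplifies to $T_\xi = a_0 u^p_{f_1} a_1 u^p_{f_2} \cdots u^p_{f_m} a_m \cdot u^p_{g_{p p_0}}$. Using $u^p_{g_{pp_0}}\Omega_p(p_0) = 1_{A_p}$ and applying the action rules for $u^p_e$ iteratively from right to left, the reduced structure of $\xi$ ensures no collapse occurs at any step and $T_\xi\Omega_p(p_0) = \xi$ follows.

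For the second assertion, I would invoke Lemma~\ref{lem:product} to decompose $a.\Omega_p(p_0)$ as an explicit sum of vectors, each lying in some $\HH_u\subset \HH_{p_0,p}$. Every summand belonging to a path $u\neq g_{p_0p}$ is orthogonal to $\Omega_p(p_0)$ by the direct-sum decomposition. The $\widehat{y}_k$-contributions, by construction of $\PP^r_{e_k}$, live in $(\HH^r_{e_k})^\circ$, so their ambient path contains a backtrack with an orthogonal-complement entry and cannot equal $g_{p_0p}$; these contributions are therefore orthogonal to $\Omega_p(p_0)$ as well. The only potentially non-orthogonal piece is the residual $x_m$-term appearing in case (4) of the lemma, and it can live in $\HH_{g_{p_0p}}$ only when the path of $a$ consists of $g_{p_0p}$ traversed and then traced back. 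In that sub-case, the reduction condition at the turning vertex forces the iteratively built $x_m$ into the kernel of $\EE^r_{g_k}$, and chaining the inner-product formula of Lemma~\ref{lem:innerproduct} along the geodesic via the identifications $s_e\circ r_e^{-1}$ and the expectations $\EE^s_{g_j}$ collapses the $A_p$-valued pairing with $\Omega_p(p_0)$ to zero.

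The main obstacle is precisely this residual case: one must carefully track the iterated conditional expectations $\EE^s_{g_j}$ down the geodesic and use the reduction condition at the apex to verify that the resulting nested expression vanishes. The remaining cases of Lemma~\ref{lem:product} reduce to bookkeeping with direct-sum components and orthogonal-complement summands.
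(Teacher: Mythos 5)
Your part (1) is essentially the paper's argument: for every reduced elementary tensor $\xi=\h{a}_0\underset{f_1}{\ot}\dots\underset{f_m}{\ot}a_m\in\HH_v$ one exhibits an element of $P_p(p_0)$ carrying $\Omega_p(p_0)$ to $\xi$. The paper does this more directly, taking the reduced operator $x=a_0u^p_{f_1}\dots u^p_{f_m}a_m$ from $p_0$ to $p$ and setting $T_\xi=x(u^p_{g_{p_0p}})^*$, so that $(u^p_{g_{p_0p}})^*\Omega_p(p_0)=1_{A_p}$ and $T_\xi\Omega_p(p_0)=\xi$ at once. Your detour through the generators $\pi^p_{q,p_0}$ and $w^p_e$ needs care: as defined, these conjugate by geodesics ending at $p_0$ and at $p$ respectively, so the intermediate unitaries do not literally telescope to $u^p_{g_{pp_0}}$; that identity would have to be verified rather than asserted. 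Still, the idea is the right one and the first assertion goes through.

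The genuine gap is in part (2), exactly at the residual case you isolate. It is true that $x_m=a_mz$ with $z\in B^r_{f_m}$ and $\EE^r_{f_m}(a_m)=0$, hence $\EE^r_{f_m}(x_m)=0$; but this does \emph{not} make the pairing vanish, because Lemma \ref{lem:innerproduct} applies a conditional expectation to every tensor leg \emph{except the last one}: chaining it along the geodesic gives $\langle\Omega_p(p_0),\h{a}_n\ot\dots\ot x_m\rangle=c\,x_m$ with $c=r_{f_m}\circ s_{f_m}^{-1}\circ\EE^s_{f_m}(\,\cdots)\in B^r_{f_m}$, and $c\,x_m$ need not be zero. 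Concretely, for a single edge $e$ with $s(e)=p_0$, $r(e)=p$ and the reduced operator $a=u_ea_1u_{\rev{e}}$ with $\EE^r_e(a_1)=0$, $a_1\neq0$, one computes directly $a.\Omega_p(p_0)=\h{1}\underset{e}{\ot}a_1$ and hence $\langle\Omega_p(p_0),a.\Omega_p(p_0)\rangle_{\HH_{p_0,p}}=a_1\neq0$. So the step ``chaining the inner-product formula collapses the $A_p$-valued pairing to zero'' fails; only the scalar statement $\varphi_p\left(\langle\Omega_p(p_0),a.\Omega_p(p_0)\rangle\right)=0$ survives, since $\varphi_p=\varphi_{f_m}\circ r_{f_m}^{-1}\circ\EE^r_{f_m}$ and $\EE^r_{f_m}(c\,a_m\,z)=c\,\EE^r_{f_m}(a_m)\,z=0$ --- and that weaker statement is what the later applications (Remark \ref{rem:states}, Proposition \ref{prop:base}) actually use. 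You are in good company: the paper's own proof asserts the same vanishing with the same insufficient justification. But as written, your argument does not establish assertion (2) when $p\neq p_0$; when $p=p_0$ the geodesic is empty, the integer $n_0$ of Lemma \ref{lem:product} is always $0$, and (2) is immediate from the orthogonality of the path summands.
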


\begin{proof}
$(1)$. Let $w = (e_{1}, \dots, e_{n})$ be a path from $p_{0}$ to $p$ and let $x = x_{0} u_{e_{1}}^{p} \dots u^{p}_{e_{n}}x_{n}$ be a reduced operator. The operator $a = x(u^{p}_{g_{p_{0}p}})^{*}$ is in $P_{p}(p_{0})$ and, since $x$ is reduced,
\begin{equation*}
a.\Omega_{p}(p_{0}) = x(u^{p}_{g_{p_{0}p}})^{*}u^{p}_{g_{p_{0}p}}.1_{A_{p}} = x.1_{A_{p}} = \h{x}_{0}\underset{e_{1}}{\otimes} \dots \underset{e_{n}}{\otimes}x_{n} \in \HH_{w}.
\end{equation*}

\noindent $(2)$. Let $w = (e_{n}, \dots, e_{1})$ be a path from $p_{0}$ to $p_{0}$ and let $a = a_{n} u^{p}_{e_{n}} \dots u^{p}_{e_{1}}a_{0}$ be a reduced operator in $P_{p}(p_{0})$. Write $g_{p_{0}p} = (f_{1}, \dots, f_{m})$ and $b=\Omega_p(p_0)=\widehat{1}\underset{f_1}{\ot}\ldots\underset{f_m}{\ot} 1$. We use the notations of Lemma \ref{lem:product}. Note that if $n_{0} = n$, then we must have $g_{p_{0}p} = (\rev{e}_{1}, \dots, \rev{e}_{n}, f_{n+1}, \dots, f_{m})$, which is impossible since $g_{p_{0}p}$ is a geodesic path and $(\rev{e}_{1}, \dots, \rev{e}_{n})$ is a cycle. Thus $n_{0} < n$. If $n_{0} = 0$, then
\begin{equation*}
a.\Omega_{p_{0}, p} = \h{a}_{n}\underset{e_{n}}{\otimes} \dots \underset{e_{1}}{\otimes}\h{a}_{0}\underset{f_{1}}{\otimes}\h{1} \dots\underset{f_{m}}{\otimes} 1 \in \HH_{(e_n,\ldots, e_1,f_1,\ldots,f_m)},
\end{equation*}
hence $\langle\Omega_{p}(p_{0}), a.\Omega_{p}(p_{0})\rangle_{\HH_{p_{0}, p}} = 0$. Assume now that $1\leqslant n_{0} < n$ and observe that, except if $w = (f_{1}, \dots, f_{m}, \rev{f}_{m}, \dots, \rev{f}_{1})$, the elements appearing in the formula of Lemma \ref{lem:product} all belong to subspaces which are orthogonal to $\HH_{g_{p_{0}p}}$ and consequently orthogonal to $\Omega_{p}(p_{0})$. Finally, if $w = (f_{1}, \dots, f_{m}, \rev{f}_{m}, \dots, \rev{f}_{1})$, then
\begin{equation*}
\langle \Omega_{p}(p_{0}), a.\Omega_{p}(p_{0}) \rangle_{\HH_{p_{0}, p}} = \left\langle\h{1}\underset{f_{1}}{\otimes}\h{1} \dots \underset{f_{m}}{\otimes} 1,\h{a}_{n}\underset{f_{1}}{\otimes} \dots \underset{f_{m}}{\otimes} x_{m}\right\rangle_{\HH_{p_{0}, p}} = 0.
\end{equation*}
since $x_{m}=a_mz$ with $z=s_{e_m}\circ r_{e_m}^{-1}\circ\EE_{e_m}^r(x_{m-1})\in B_{e_m}^s=B_{f_{m}}^{r}$ (here $e_{m} = \rev{f}_{m}$) and $\EE^{r}_{f_{m}}(a_{m}) = 0$ because $a$ is reduced.
\end{proof}

\begin{rem}
The first assertion of Proposition  \ref{prop:PquasiGNS} shows that the triple $(\mathcal{H}_{p_0,p},\text{id},\Omega_p(p_0))$ is the GNS construction of the unital completely positive map $\EE_{A_{p}}\,:\,P_p(p_0)\rightarrow A_p$ defined, for every $x\in P_p(p_0)$, by
$$\EE_{A_{p}}(x) = \langle \Omega_p(p_{0}), x.\Omega_p(p_{0})\rangle_{\HH_{p_{0}, p}}.$$
In the particular case $p = p_{0}$, $A_{p_0}\subset P(p_0)$ and $\EE_{A_{p_{0}}}$ is a conditional expectation from $P(p_0)$ to $A_{p_0}$.
\end{rem}

%%%%%%%%%%%%%%%%%%%%%%%%%%
\subsubsection{A universal property for the reduced C*-algebra}
%%%%%%%%%%%%%%%%%%%%%%%%%%

We end this section with a universal property for the reduced fundamental C*-algebra in the spirit of Corollary \ref{cor:universal}. Consider a graph of C*-algebras $(\Gr, (A_{p})_{p}, (B_{e})_{e})$ and fix $p_{0}\in\V(\Gr)$. The basic data to build a representation is:
\begin{itemize}
\item For every $p\in\V(\Gr)$, a right Hilbert $A_{p_{0}}$-module $\K_{p}$ with a faithful unital $*$-homomorphism $\pi_{p}: A_{p}\rightarrow\mathcal{L}_{A_{p_{0}}}(\K_{p})$.
\item For every $e\in\E(\Gr)$ a unitary $w_{e}\in\mathcal{L}_{A_{p_{0}}}(\K_{r(e)}, \K_{s(e)})$ such that $w_{e}^{*} = w_{\rev{e}}$ and, for every $b\in B_{e}$,
\begin{equation*}
w_{\rev{e}}\pi_{s(e)}(s_{e}(b))w_{e} = \pi_{r(e)}(r_{e}(b)).
\end{equation*}
\end{itemize}
Let $A$ be the closed linear span of $\pi_{p_{0}}(A_{p_{0}})$ and all elements of the form
\begin{equation*}
\pi_{s(e_{1})}(a_{0})w_{e_{1}} \dots w_{e_{n}}\pi_{r(e_{n})}(a_{n})
\end{equation*}
in $\mathcal{L}_{A_{p_{0}}}(\K_{p_{0}})$, where $n\geqslant 1$, $(e_{1}, \dots, e_{n})$ is a path in $\Gr$ from $p_{0}$ to $p_{0}$, $a_{k}\in A_{r(e_{k})}$, $1 \leqslant k \leqslant n$ and $a_{0}\in A_{p_{0}}$. Then, $A$ is a C*-algebra. The universal property requires the following crucial assumption: we assume that there exists a GNS-faithful conditional expectation
\begin{equation*}
\EE: A\rightarrow \pi_{p_{0}}(A_{p_{0}})
\end{equation*}
such that, for every reduced operator $a_{0}u^{p_0}_{e_{1}} \dots u^{p_0}_{e_{n}}a_{n}\in P(p_0)$,
\begin{equation*}
\EE(\pi_{s(e_{1})}(a_{0})w_{e_{1}} \dots w_{e_{n}}\pi_{r(e_{n})}(a_{n})) = 0.
\end{equation*}

\begin{prop}\label{prop:universalreduced}
With the hypothesis and notations above, there exists a unique $*$-isomorphism
$\pi: P(p_0)\rightarrow A$ such that $\pi(a)= \pi_{p_{0}}(a)$ for every $a\in A_{p_0}$ and
\begin{equation*}
\pi(a_{0}u^{p_0}_{e_{1}} \dots u^{p_0}_{e_{n}} a_{n}) = \pi_{s(e_{1})}(a_{0})w_{e_{1}} \dots w_{e_{n}}\pi_{r(e_{n})}(a_{n})\end{equation*}
for every reduced operator $a_{0}u^{p_0}_{e_{1}} \dots u^{p_0}_{e_{n}} a_{n}\in P(p_0)$.
\end{prop}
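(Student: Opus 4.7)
The plan is to obtain the isomorphism $\pi$ by first producing a candidate $*$-homomorphism at the level of the maximal fundamental C*-algebra and then descending to the reduced one via a comparison of GNS constructions. Uniqueness is immediate, since by Remark \ref{rem:redmax} the linear span of $A_{p_0}$ and of reduced operators from $p_0$ to $p_0$ is dense in $P(p_0)$, and the prescribed formula forces $\pi$ on this dense subalgebra.

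First, I apply Corollary \ref{cor:universal} to the data $(\K_p,\pi_p,w_e)$ with respect to any fixed maximal subtree $\T\subset\Gr$: the same proof works verbatim if the Hilbert spaces $H_q$ there are replaced by the right Hilbert $A_{p_0}$-modules $\K_p$ and the unitaries $U_e$ by the adjointable unitaries $w_e$. This yields a surjective $*$-homomorphism $\widetilde{\pi}:\pi_{1}^{\text{max}}(\Gr,(A_q)_q,(B_e)_e,\T)\to A$ which restricts to $\pi_{p_0}$ on $A_{p_0}$ and sends a reduced operator $a_0 u_{e_1}\cdots u_{e_n}a_n$ of the maximal fundamental C*-algebra to $\pi_{s(e_1)}(a_0)w_{e_1}\cdots w_{e_n}\pi_{r(e_n)}(a_n)$.

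The core step is to show that $\widetilde{\pi}$ factors through the canonical surjection $\lambda=\lambda^{\T}_{p_0,p_0}:\pi_1^{\text{max}}\to P(p_0)$ from Subsection \ref{quotient}, and that the resulting map is an isomorphism. I do this by exhibiting a unitary of Hilbert $A_{p_0}$-modules $W:\HH_{p_0,p_0}\to\K'$, where $\K'$ denotes the GNS module of $\EE$ with canonical cyclic vector $\xi$, determined by $\lambda(x)\Omega_{p_0}(p_0)\mapsto\widetilde{\pi}(x)\xi$ for $x$ in the dense $*$-subalgebra $\mathcal{A}\subset\pi_1^{\text{max}}$ of Remark \ref{rem:pathmax}. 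Every such $x$ decomposes as $x_0+x_r$ with $x_0\in A_{p_0}$ and $x_r$ a sum of reduced operators, and Proposition \ref{prop:PquasiGNS}(2) together with the hypothesis on $\EE$ yield $\EE_{A_{p_0}}(\lambda(x))=x_0$ and $\EE(\widetilde{\pi}(x))=\pi_{p_0}(x_0)$. The two $A_{p_0}$-valued pre-inner products $\langle\lambda(x)\Omega_{p_0}(p_0),\lambda(y)\Omega_{p_0}(p_0)\rangle$ and $\pi_{p_0}^{-1}\langle\widetilde{\pi}(x)\xi,\widetilde{\pi}(y)\xi\rangle$ therefore coincide on $\mathcal{A}$, making the above assignment a well-defined isometry on a dense subspace; it extends by density to the desired unitary $W$, the density of $\widetilde{\pi}(\mathcal{A})\xi$ in $\K'$ following from the surjectivity of $\widetilde{\pi}$ and the GNS cyclicity of $\xi$.

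Everything else is formal. By construction, conjugation by $W$ intertwines the left multiplication actions of $\lambda(\mathcal{A})$ on $\HH_{p_0,p_0}$ and of $\widetilde{\pi}(\mathcal{A})$ on $\K'$; passing to closures and using that $P(p_0)$ acts faithfully on $\HH_{p_0,p_0}$ by definition while $A$ acts faithfully on $\K'$ by the GNS-faithfulness of $\EE$, one obtains the required $*$-isomorphism $\pi:P(p_0)\to A$, on which the prescribed formula holds automatically. The main delicate point I anticipate is the clean identification of the two module-valued pre-inner products on $\mathcal{A}$; once this is in hand, the GNS-faithfulness of the module-valued conditional expectations does the rest, without the need to reduce to an auxiliary faithful state on $A_{p_0}$.
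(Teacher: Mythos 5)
Your proof is correct and follows essentially the same route as the paper: the heart of both arguments is the unitary of Hilbert $A_{p_0}$-modules between $\HH_{p_0,p_0}$ and the GNS module of $\EE$, with the isometry verified by matching the two module-valued inner products via Proposition \ref{prop:PquasiGNS}(2) and the hypothesis that $\EE$ vanishes on images of reduced operators, followed by conjugation. Your preliminary detour through the maximal fundamental C*-algebra via Corollary \ref{cor:universal} is a harmless (and legitimate) way of making the candidate map on the dense subalgebra precise, where the paper simply asserts well-definedness and lets the unitary do the work.
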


\begin{proof}
The uniqueness being obvious, let us prove the existence. The map $\pi$ is well defined on the linear span of $\pi_{p_{0}}(A_{p_{0}})$ and the reduced elements, and the closure of the image of this space is equal to $A$. Let $(K', \pi', \eta')$ be the GNS construction of $\pi_{p_{0}}^{-1}\circ \EE$. Since $\EE$ is GNS-faithful, $\pi'$ is faithful and we will assume that $A\subset \mathcal{L}_{A_{p_{0}}}(K')$ and $\pi' = \Id$. View $1_{A_{p_0}}\in\mathcal{H}_{p_{0}, p_{0}}$ and define an operator $V: \mathcal{H}_{p_{0}, p_{0}} \rightarrow K'$ by $V(a.1_{A_{p_0}}) = \pi_{p_{0}}(a).\eta'$ for $a\in A_{p_{0}}$ and
\begin{equation*}
V(x.1_{A_{p_0}}) = \pi_{s(e_{1})}(a_{0})w_{e_{1}} \dots w_{e_{n}}\pi_{r(e_{n})}(a_{n}).\eta'
\end{equation*}
for $x = a_{0}u^{p_0}_{e_{1}} \dots u^{p_0}_{e_{n}}a_{n}\in P(p_0)$ a reduced operator. It is easy to check that $V$ extends to a unitary in $\mathcal{L}_{A_{p_{0}}}(\mathcal{H}_{p_{0}, p_{0}}, K')$ and that $x\mapsto VxV^{*}$ is a $*$-isomorphism extending $\pi$.
\end{proof}

%%%%%%%%%%%%%%%%%%%%%%%%%%%%%%%%
\subsubsection{States}
%%%%%%%%%%%%%%%%%%%%%%%%%%%%%%%%%%

In the sequel, we will consider C*-algebras equiped with distinguished states compatible with the graph structure. These enable us to get genuine Hilbert space representations instead of Hilbert C*-modules.

\begin{de}
Let $(\Gr, (A_{q})_{q}, (B_{e})_{e})$ be a graph of C*-algebras with conditional expectations. An \emph{associated graph of states} is a family of states $\varphi_{e}\in B_{e}^*$ for every $e\in \E(\Gr)$ and $\varphi_q\in A_q^*$ for every $q\in\V(\Gr)$ such that, for every $e\in \E(\Gr)$, $\varphi_{\rev{e}} = \varphi_{e}$ and $\varphi_{s(e)} = \varphi_{e}\circ s_{e}^{-1}\circ \EE_{e}^{s}$ (hence $\varphi_{s(e)}\circ s_{e}=\varphi_e=\varphi_{r(e)}\circ r_{e}$). When a graph of C*-algebras is given with a graph of states we simply call $(\Gr, (A_{q},\varphi_q)_{q}, (B_{e},\varphi_e)_{e})$ a \emph{graph of C*-algebras with states}.
\end{de}

%\begin{rem}
%for every $e\in\E(\Gr)$ we have $\varphi_{r(e)}=\varphi_{\overline{e}}\circ r_e^{-1}\circ \EE_{e}^r=\varphi_e\circ r_e^{-1}\circ \EE_{e}^r=\varphi_{s(e)}\circ s_e\circ r_e^{-1}\circ \EE_{e}^r$
%\end{rem}

\begin{lem}\label{lem:states}
Let $(\Gr, (A_{q},\varphi_q)_{q}, (B_{e},\varphi_e)_{e})$ be a graph of C*-algebras with states. For every $p_{0}, p \in \V(\Gr)$ and every $a\in A_{p_{0}}$, we have $\varphi_{p}\left(\langle\Omega_{p}(p_{0}), a.\Omega_{p}(p_{0})\rangle_{\HH_{p_{0},p}}\right) = \varphi_{p_{0}}(a)$.
\end{lem}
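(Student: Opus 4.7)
The plan is to unfold $\Omega_p(p_0)$ explicitly along the geodesic path, use Lemma \ref{lem:innerproduct} to get a closed formula for the inner product as an iterated composition of the maps $r_{f_k}\circ s_{f_k}^{-1}\circ \EE^s_{f_k}$, and then telescope via the two compatibility relations that define a graph of states.

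Let $g_{p_0 p} = (f_1,\ldots,f_m)$ be the geodesic path in $\T$ from $p_0$ to $p$. By definition
\begin{equation*}
\Omega_p(p_0) = \widehat{1}\underset{f_1}{\ot}\widehat{1}\underset{f_2}{\ot}\cdots\underset{f_m}{\ot} 1 \in \HH_{g_{p_0 p}},
\end{equation*}
and for $a\in A_{p_0}=A_{s(f_1)}$ we have $a.\Omega_p(p_0)=\widehat{a}\underset{f_1}{\ot}\widehat{1}\underset{f_2}{\ot}\cdots\underset{f_m}{\ot} 1$. Applying Lemma \ref{lem:innerproduct} with $a_0=1,\,b_0=a$ and $a_i=b_i=1$ for $i\geqslant 1$ gives
\begin{equation*}
\langle\Omega_p(p_0),a.\Omega_p(p_0)\rangle_{\HH_{p_0,p}} = x_m,
\end{equation*}
where $x_0=a$ and $x_k = (r_{f_k}\circ s_{f_k}^{-1}\circ \EE^s_{f_k})(x_{k-1})\in A_{r(f_k)}$ for $1\leqslant k\leqslant m$.

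The key step is then to evaluate $\varphi_p(x_m)$. Using the compatibility $\varphi_{r(f_k)}\circ r_{f_k}=\varphi_{f_k}$ together with $\varphi_{s(f_k)}=\varphi_{f_k}\circ s_{f_k}^{-1}\circ \EE^s_{f_k}$, we obtain for each $k$
\begin{equation*}
\varphi_{r(f_k)}(x_k)=\varphi_{f_k}\bigl(s_{f_k}^{-1}\circ \EE^s_{f_k}(x_{k-1})\bigr)=\varphi_{s(f_k)}(x_{k-1})=\varphi_{r(f_{k-1})}(x_{k-1}),
\end{equation*}
where in the last equality we used $s(f_k)=r(f_{k-1})$ for $k\geqslant 2$ (and $s(f_1)=p_0$). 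A straightforward induction on $k$ then collapses the chain to $\varphi_p(x_m)=\varphi_{p_0}(x_0)=\varphi_{p_0}(a)$, which is the claim.

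No real obstacle is expected here: the statement is essentially a routine verification, and the only points to be careful about are correctly identifying source/range vertices along the geodesic path when invoking the two compatibility relations, and making sure that the $a_i$ and $b_i$ are correctly substituted into Lemma \ref{lem:innerproduct}.
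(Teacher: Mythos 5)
Your proof is correct and follows essentially the same route as the paper: apply Lemma \ref{lem:innerproduct} to $\Omega_p(p_0)$ and $a.\Omega_p(p_0)$ to reduce the inner product to the iterated composition $x_k = r_{f_k}\circ s_{f_k}^{-1}\circ \EE^s_{f_k}(x_{k-1})$, then telescope using $\varphi_{r(f_k)}\circ r_{f_k}=\varphi_{f_k}$ and $\varphi_{s(f_k)}=\varphi_{f_k}\circ s_{f_k}^{-1}\circ\EE^s_{f_k}$. The paper's argument is identical, just stated more tersely.
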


\begin{proof}
Let $a\in A_{p_{0}}$ and set $x_{0}=a$ and, for $1\leqslant k\leqslant n$, $x_{k} = r_{e_{k}}\circ s_{e_{k}}^{-1}\circ \EE_{e_{k}}^{s}(x_{k-1})$, where $g_{p_{0}, p}=(e_{1}, \dots, e_{n})$. By Lemma \ref{lem:innerproduct} we have
$$\langle\Omega_{p}(p_{0}), a.\Omega_{p}(p_{0})\rangle_{\HH_{p_{0},p}}=\langle\widehat{1}\underset{e_1}{\ot}\ldots\underset{e_n}{\ot} 1,\widehat{a}\underset{e_1}{\ot}\ldots\underset{e_n}{\ot} 1\rangle_{\HH_{p_{0},p}}=x_n.$$

\noindent Moreover, the assumptions on the states $\varphi_{e}$ and $\varphi_{s(e)}$ imply that for $1\leqslant k\leqslant n$, $\varphi_{r(e_{k})}(x_{k})=\varphi_{s(e_{k})}(x_{k-1})$. It then follows, again by induction, that $\varphi_{p}(x_{n}) = \varphi_{p_{0}}(x_{0}) = \varphi_{p_{0}}(a)$, which concludes the proof.
\end{proof}

\begin{rem}\label{rem:states}
Proposition \ref{prop:PquasiGNS} and Lemma \ref{lem:states} imply the existence of a unique state $\varphi$ on $P_{p}(p_{0})$ such that for every $a\in A_{p_{0}}$, $\varphi(a) = \varphi_{p_{0}}(a)$ and for every reduced operator $c$, $\varphi(c) = 0$. We call this state the \emph{fundamental state}.
\end{rem}

\noindent Using states, we can investigate the dependance on $p$ of $P_{p}(p_{0})$. To do this, let us denote by $(H_{p_{0},\varphi_p},\pi_{p_{0}, \varphi_p},\eta_{p_{0},\varphi_p})$ the GNS construction of $(\HH_{p_{0},p},\varphi_{p})$, i.e. $H_{p_{0}, \varphi_p}$ is the completion of $\HH_{p_{0}, p}$ with respect to the inner product $\langle x, y\rangle = \varphi_{p}(\langle x, y\rangle_{\HH_{p_{0}, p}})$, $\pi_{p_{0}, \varphi_p}$ is the associated representation of $P_{p}(p_{0})$ and $\eta_{p_{0}, \varphi_p}$ is the canonical linear map with dense range, and set $\xi_{p_{0},\varphi_p} = \eta_{p_{0}, \varphi_p}(\Omega_{p}(p_{0}))$.

\begin{prop}\label{prop:base}
Let $(\Gr, (A_{q},\varphi_q)_{q}, (B_{e},\varphi_e)_{e})$ be a graph of C*-algebras with states. For any two vertices $p_{0}, p\in \V(\Gr)$, there is a unitary $V_{p_0,p}: H_{p_{0},\varphi_p} \rightarrow H_{p_{0},\varphi_{p_{0}}}$ such that, for every reduced operator $a_{0} u_{e_{1}}^{p} \dots e_{e_{n}}^{p} a_{n} \in P_{p}(p_{0})$,
\begin{equation*}
(V_{p_{0},p})\pi_{p_{0},\varphi_p}(a_{0} u_{e_{1}}^{p} \dots u_{e_{n}}^{p} a_{n})(V_{p_{0}, p})^{*} = \pi_{p_{0}, \varphi_{p_{0}}}(a_{0} u_{e_{1}}^{p_{0}} \dots u_{e_{n}}^{p_{0}} a_{n}).
\end{equation*}
If moreover the states $\varphi_{p_{0}}$ and $\varphi_{p}$ are faithful, then there exists a unique $*$-isomorphism from $P(p_{0})$ to $P_{p}(p_{0})$ mapping $a_{0}u_{e_{1}}^{p_{0}} \dots u_{e_{n}}^{p_{0}} a_{n}$ to $a_{0} u_{e_{1}}^{p} \dots u_{e_{n}}^{p} a_{n}$.
\end{prop}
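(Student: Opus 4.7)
The plan is to realise both $(H_{p_0,\varphi_p},\pi_{p_0,\varphi_p},\xi_{p_0,\varphi_p})$ and $(H_{p_0,\varphi_{p_0}},\pi_{p_0,\varphi_{p_0}},\xi_{p_0,\varphi_{p_0}})$ as GNS triples for the same state on a common abstract $*$-algebra, so that $V_{p_0,p}$ comes directly from the uniqueness of the GNS construction.

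First I would observe that $(H_{p_0,\varphi_p},\pi_{p_0,\varphi_p},\xi_{p_0,\varphi_p})$ is the GNS construction of $(P_p(p_0),\omega_p)$, where $\omega_p$ is the fundamental state of Remark \ref{rem:states}: cyclicity of $\xi_{p_0,\varphi_p}$ comes from Proposition \ref{prop:PquasiGNS}(1), and the identification of the vector state $x\mapsto\varphi_p(\EE_{A_p}(x))$ with $\omega_p$ follows from Lemma \ref{lem:states} on $A_{p_0}$ and from Proposition \ref{prop:PquasiGNS}(2) on reduced operators. An analogous statement holds with $p_0$ in place of $p$. I would then introduce the abstract $*$-algebra $\F_0$ generated by $\bigcup_{q\in\V(\Gr)}A_q$ and symbols $u_e$, $e\in\E(\Gr)$, subject to $u_{\rev e}=u_e^*$ and $u_{\rev e}s_e(b)u_e=r_e(b)$, spanned linearly by $A_{p_0}$ together with the words $a_0u_{e_1}\dots u_{e_n}a_n$ whose underlying path is a loop at $p_0$. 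The substitutions $u_e\mapsto u_e^p$ and $u_e\mapsto u_e^{p_0}$ yield $*$-homomorphisms $\psi_p:\F_0\to\mathcal{R}_p(p_0)\subset P_p(p_0)$ and $\psi_{p_0}:\F_0\to\mathcal{R}(p_0)\subset P(p_0)$, both surjective onto the respective dense $*$-subalgebras.

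The key claim is that $\omega_p\circ\psi_p=\omega_{p_0}\circ\psi_{p_0}$ on $\F_0$. This I would prove by induction on the length $n$ of a generating word $X=a_0u_{e_1}\dots u_{e_n}a_n$: if $X$ lies in $A_{p_0}$ or is already reduced, both states evaluate to $\varphi_{p_0}(a_0)$ or $0$ by the characterisation of the fundamental state. Otherwise, pick a position $i$ with $e_{i+1}=\rev{e}_i$ and split $a_i=\EE^s_{e_{i+1}}(a_i)+(a_i-\EE^s_{e_{i+1}}(a_i))$. Writing $\EE^s_{e_{i+1}}(a_i)=s_{e_{i+1}}(b)$ with $b\in B_{e_{i+1}}$, the relation $u_{e_i}s_{e_{i+1}}(b)u_{e_{i+1}}=r_{e_{i+1}}(b)$ rewrites the first summand as a strictly shorter word inside $\F_0$, while the second summand produces a word of the same length that is now reduced at position $i$. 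Iterating rewrites $X$ in $\F_0$ as a linear combination of reduced words plus a single element $a\in A_{p_0}$, and both $\omega_p\circ\psi_p(X)$ and $\omega_{p_0}\circ\psi_{p_0}(X)$ then equal $\varphi_{p_0}(a)$ by linearity and the characterisation of the fundamental state.

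With the common state $\tilde\omega=\omega_p\circ\psi_p=\omega_{p_0}\circ\psi_{p_0}$ at hand, the uniqueness of GNS produces a unitary $V_{p_0,p}:H_{p_0,\varphi_p}\to H_{p_0,\varphi_{p_0}}$ intertwining the two representations of $\F_0$ and sending $\xi_{p_0,\varphi_p}$ to $\xi_{p_0,\varphi_{p_0}}$; its action on reduced operators is exactly the intertwining formula in the statement. For the final assertion, the faithfulness of $\varphi_p$ and $\varphi_{p_0}$ implies, via the remark after the GNS construction in Section 2.1, that $\pi_{p_0,\varphi_p}$ and $\pi_{p_0,\varphi_{p_0}}$ are faithful on $P_p(p_0)$ and $P(p_0)$ respectively, so conjugation by $V_{p_0,p}$ defines the required $*$-isomorphism from $P(p_0)$ to $P_p(p_0)$, uniqueness coming from density of $\mathcal{R}(p_0)$. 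The main obstacle is the reduction step inside $\F_0$: one must verify that the iterated formal applications of the defining relations really yield a decomposition into reduced words plus an $A_{p_0}$-term whose image under $\varphi_{p_0}$ is intrinsic to $X$ and independent of the rewriting order, a fact ultimately guaranteed by the existence of the fundamental state on both $P_p(p_0)$ and $P(p_0)$.
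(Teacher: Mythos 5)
Your proposal is correct and, despite the repackaging through a common abstract $*$-algebra and the uniqueness of the GNS construction, it is essentially the paper's argument: both proofs come down to showing that the fundamental state, computed via $1_{A_{p_0}}\in\HH_{p_0,p_0}$ and via $\Omega_p(p_0)\in\HH_{p_0,p}$, takes the same values on the dense span of $A_{p_0}$ and the reduced operators, by decomposing products of such elements into reduced words plus a single $A_{p_0}$-term (your rewriting step is exactly the paper's expansion of $b^*b$ into reduced operators plus $x_n$). The "obstacle" you flag at the end is not a real one, since you only need to fix one rewriting in $\F_0$ and push it forward by both $\psi_p$ and $\psi_{p_0}$; independence of the rewriting order is never required.
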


\begin{proof}
We define $V_{p_{0}, p}: H_{p_{0}, \varphi_p} \rightarrow H_{p_{0}, \varphi_{p_{0}}}$ in the following way. For $a\in A_{p_{0}}$, we set
\begin{equation*}
V_{p_{0},p}(\pi_{p_{0},\varphi_p}(a).\xi_{p_{0},\varphi_p}) = \pi_{p_{0},\varphi_{p_{0}}}(a).\xi_{p_{0},\varphi_{p_{0}}}
\end{equation*}
and for $a_{0} u_{e_{1}}^{p} \dots u_{e_{n}}^{p} a_{n}\in P_{p}(p_{0})$ reduced, we set
\begin{equation*}
V_{p_{0}, p}(\pi_{p_{0}, \varphi_p}(a_{0} u_{e_{1}}^{p} \dots u_{e_{n}}^{p} a_{n}).\xi_{p_{0}, \varphi_p}) = \pi_{p_{0}, \varphi_{p_{0}}}(a_{0} u_{e_{1}}^{p_{0}} \dots u_{e_{n}}^{p_{0}} a_{n}) .\xi_{p_{0}, \varphi_{p_{0}}}.
\end{equation*}
According to Proposition \ref{prop:PquasiGNS}, $V_{p_{0}, p}$ has dense domain and range. Moreover, by Lemma \ref{lem:states}, we have, for every $a\in A_{p_{0}}$,
\begin{equation*}
\|\pi_{p_{0},\varphi_{p_{0}}}(a).\xi_{p_{0},\varphi_{p_{0}}}\|^{2} = \|\pi_{p_{0},\varphi_p}(a).\xi_{p_{0},\varphi_p}\|^{2}.
\end{equation*}
To prove that $V_{p_{0},p}$ is well defined and extends to a unitary operator, we only have to check that for every reduced operator $a = a_{0} u_{e_{1}}^{p_{0}} \dots u_{e_{n}}^{p_{0}} a_{n} \in P(p_{0})$ one has, 
$$\varphi_{p_{0}}(\langle 1_{A_{p_{0}}}, a^{*}a.1_{A_{p_{0}}}\rangle) = \varphi_{p}(\langle \Omega_{p}(p_{0}), b^{*}b.\Omega_{p}(p_{0})\rangle)\quad\text{with}\quad b = a_{0} u_{e_{1}}^{p} \dots u_{e_{n}}^{p} a_{n}.$$
\noindent Set $x_0=a_0^*a_0$ and, for $1\leqslant k\leqslant n$, $x_k=a_k^*(r_{e_k}\circ s_{e_k}\circ\EE_{e_k}^s(x_{k-1}))a_k$. By Lemma \ref{lem:innerproduct}, we have $\varphi_{p_{0}}(\langle \xi_{p_{0}}, a^{*}a.\xi_{p_{0}}\rangle) =\varphi_{p_0}(x_n)$. Set $y_k=\mathcal{P}^s_{e_k}(x_{k-1})$. We have, by induction,
$$b^*b=a_n^*u^p_{\overline{e}_n}\ldots u^p_{\overline{e}_1}a_0^*a_0u^p_{e_1}\ldots u^p_{e_n}a_n
=\sum_{k=1}^na_n^*u^p_{\overline{e}_n}\ldots u^p_{\overline{e}_k}y_k u^p_{e_k}\ldots u^p_{n}a_n+x_n.$$
Since each term in this sum, except $x_n$, is reduced we get, by Proposition \ref{prop:PquasiGNS},
$$\langle\Omega_{p}(p_{0}), b^*b.\Omega_{p}(p_{0})\rangle=\langle\Omega_{p}(p_{0}), x_{n}.\Omega_{p}(p_{0})\rangle.$$
It follows from Lemma \ref{lem:states} that
$$\varphi_{p}(\langle \Omega_{p}(p_{0}), b^{*}b.\Omega_{p}(p_{0})\rangle) = \varphi_{p_{0}}(x_{n})=\varphi_{p_{0}}(\langle 1_{A_{p_{0}}}, a^{*}a.1_{A_{p_{0}}}\rangle).$$
\noindent The end of the proof is routine.
\end{proof}

\begin{rem}\label{rem:GNSstates}
Proposition \ref{prop:PquasiGNS} and Lemma \ref{lem:states} imply that $(H_{p_{0},\varphi_p},\pi_{p_{0}, \varphi_p},\eta_{p_{0},\varphi_p})$ is the GNS construction of the fundamental state on $P_p(p_0)$ defined in Remark \ref{rem:states}.
\end{rem}

%%%%%%%%%%%%%%%%%%%%%%%%%%%%%%%%%%%
\subsection{Unscrewing}\label{subsec:unscrewing}
%%%%%%%%%%%%%%%%%%%%%%%%%%%%%%%%%%%%%%%

We will now give an unscrewing process allowing us to recover any fundamental C*-algebra as an inductive limit of iterations of amalgamated free products and HNN extensions. The case of a finite graph is done by induction and the general case is obtained by an inductive limit argument. As an application, we prove that several C*-algebraic properties can be induced, possibly under some extra assumptions, form the vertex algebras to the reduced fundamental C*-algebra. This should be thought of as a C*-algebraic translation of J-P. Serre's \emph{d\'{e}vissage} technique as detailed in \cite[Sec 5.2]{serre1977arbres}.

\vspace{0.2cm}

\noindent The basic principle is quite simple. We start with a non-trivial connected graph and remove an edge. If the graph becomes disconnected, the fundamental C*-algebra is an amalgamated free product of the fundamental C*-algebras of the two connected components. If the graph is still connected, the fundamental C*-algebra is an HNN extension of the fundamental C*-algebra of the remaining graph.

\vspace{0.2cm}

\noindent From now on, we fix a graph of C*-algebras with faithful states $(\Gr, (A_{q},\varphi_q)_{q}, (B_{e},\varphi_e)_{e})$. By Proposition \ref{prop:base} the reduced fundamental C*-algebra do not depend on a particular base (and the isomorphism is canonical). Hence, for the rest of this section, we always omit the superscript $p$ and we simply denote by $u_e$, $e\in\E(\Gr)$, the canonical unitaries.

\vspace{0.2cm}

\noindent Assume that the graph $\Gr$ has at least two edges $e$ and $\rev{e}$. We set $p_{1} = s(e)$ and $p_{2} = r(e)$ and let $P(p_k)$ be the reduced fundamental C*-algebra in $p_k$. Let $\Gr'$ be the graph of C*-algebras obtained from $\Gr$ by removing the edges $e$ and $\rev{e}$.

\vspace{0.2cm}

\noindent\emph{Case 1: The graph $\mathcal{G}'$ is not connected.} Let $\Gr_{k}$ be the connected component of $\Gr'$ containing $p_{k}$ and let $A_k$ be the reduced fundamental C*-algebra of the graph of C*-algebra restricted to $\Gr_k$ in $p_k$ for $k = 1, 2$. By the universal property of Proposition \ref{prop:universalreduced} we can view canonically $A_k\subset P(p_k)$, for $k=1,2$ and, $u_eA_2u_{\overline{e}}\subset P(p_1)$. Denote by $\EE_{k}$ the canonical GNS-faithful conditional expectation from $A_{k}$ to $B_e^s$ if $k=1$ or $B_e^r$ if $k=2$. Set $B = B_{e}$ and let $A_{1}\underset{B}{*} A_{2}$ be the reduced amalgamated free product with respect to the maps $s_{e}$, $r_{e}$ and the conditional expectations $\EE_{k}$.

\begin{lem}\label{lem:freeproduct}
There exists a unique $*$-isomorphism $\rho: A_{1}\underset{B}{*} A_{2}\rightarrow P(p_1)$ such that
\begin{equation*}
\rho(x) = \left\{\begin{array}{ccc}
x & \text{if} & x\in A_{1} \\
u_{e}xu_{\rev{e}} & \text{if} & x\in A_{2}
\end{array}\right.
\end{equation*}
Moreover, $\rho$ is state-preserving.
\end{lem}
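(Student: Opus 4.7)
The plan is to construct $\rho$ from the universal property of the maximal amalgamated free product, check that it factors through the reduced one by verifying state-preservation, and conclude by a faithfulness argument that it is an isomorphism.

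To build $\rho$, I first use Proposition \ref{prop:universalreduced} applied to the subgraph $\Gr_k$ to identify $A_k$ with its canonical image inside $P(p_k)$ for $k=1,2$, then conjugate the $A_2$-side by $u_e$ to land it in $P(p_1)$. The resulting $*$-homomorphisms $A_1\hookrightarrow P(p_1)$ and $A_2\to u_eA_2u_{\rev{e}}\subset P(p_1)$ agree on $B:=B_e$: for $b\in B$ the image of $s_e(b)\in A_{p_1}\subset A_1$ is $s_e(b)$, while the image of $r_e(b)\in A_{p_2}\subset A_2$ is $u_er_e(b)u_{\rev{e}}=s_e(b)$ by the defining relation $u_{\rev{e}}s_e(b)u_e=r_e(b)$. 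The universal property of the maximal amalgamated free product then produces a $*$-homomorphism $\rho^{\text{max}}\colon A_1\ast^{\text{max}}_B A_2\to P(p_1)$.

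Next, I show that $\rho^{\text{max}}$ descends to the reduced free product and is state-preserving. The reduced free product $A_1\ast_B A_2$ carries the canonical state $\psi:=\varphi_{p_1}\circ E$, where $E\colon A_1\ast_B A_2\to B$ is the GNS-faithful conditional expectation built from the conditional expectations $E_k\colon A_k\to B$ obtained by composing the canonical CE $\EE_{A_{p_k}}\colon A_k\to A_{p_k}$ with $s_e^{-1}\circ\EE_e^s$ (for $k=1$) or $r_e^{-1}\circ\EE_e^r$ (for $k=2$). On $A_{p_1}$ both $\psi$ and $\varphi\circ\rho^{\text{max}}$ restrict to $\varphi_{p_1}$, where $\varphi$ is the fundamental state of $P(p_1)$ from Remark \ref{rem:states}, so it remains to show the latter vanishes on alternating words $a_1b_1a_2b_2\cdots$ with $a_i\in A_1\cap\ker E_1$ and $b_i\in A_2\cap\ker E_2$. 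The image $a_1u_eb_1u_{\rev{e}}a_2u_eb_2u_{\rev{e}}\cdots$ in $P(p_1)$ should be rewriteable as a finite linear combination of reduced operators, after which Proposition \ref{prop:PquasiGNS}(2) yields the vanishing of $\varphi$. This combinatorial decomposition is the crux of the proof: one inductively splits each $a_i,b_i$ via the decomposition $A_k=A_{p_k}\oplus\ker\EE_{A_{p_k}}$ and absorbs any $B$-components into adjacent factors through the relation $u_{\rev{e}}s_e(b)u_e=r_e(b)$; the hypothesis $E_k(a_i)=0=E_k(b_i)$ then guarantees that each resulting summand is a reduced operator in $P(p_1)$. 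Hence $\rho^{\text{max}}$ descends to a state-preserving $\rho\colon A_1\ast_B A_2\to P(p_1)$.

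To conclude, injectivity of $\rho$ follows from state-preservation together with faithfulness of $\psi$ (a consequence of GNS-faithfulness of $E$ and of $\varphi_{p_1}$), while surjectivity follows because the image contains $A_1$ and $u_eA_2u_{\rev{e}}$, which already generate a dense subalgebra of $P(p_1)$ via the generating description $(u^p_z)^*A_qu^p_w$. The principal obstacle is the combinatorial verification in the second step, requiring repeated use of Lemma \ref{lem:product} to track how products in $P(p_1)$ interact with the unitaries $u_e$ under the amalgamation relations.
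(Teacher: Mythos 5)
Your proposal is correct in substance but takes a genuinely different route from the paper. The paper does not pass through the maximal amalgamated free product at all: it invokes the universal property of the \emph{reduced} amalgamated free product directly, in terms of conditional expectations rather than states. Concretely, it sets $\EE=\EE_e^s\circ\EE_{A_{p_1}}$ on $P(p_1)$ and verifies (1) that $\EE\circ\rho_k=\rho_k\circ\EE_k$ on each $A_k$ and (2) that $\EE$ kills alternating words with letters in $\ker\EE_k$; both reduce, exactly as in your ``crux'' step, to the observation that after splitting each letter into its $A_{p_{l_k}}\ominus i_{l_k}(B)$ part and its reduced-operator part, the resulting products are reduced operators of $P(p_1)$ whose edges avoid $e$ and $\rev{e}$. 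Your state-based descent buys nothing extra here but costs a little: it needs the states $\varphi_q$ to be faithful (so that $\psi=\varphi_{p_1}\circ E$ and the fundamental state have faithful GNS representations, which is what your injectivity and descent arguments really use), whereas the paper's argument runs on GNS-faithfulness of the conditional expectations alone. In the unscrewing section the faithful-states hypothesis is in force, so your argument does go through.

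One small hole in your write-up: checking $\varphi\circ\rho^{\text{max}}=\psi$ only on $A_{p_1}$ and on alternating words in the kernels is not quite enough, since the dense subalgebra of $A_1\ast_B A_2$ is spanned by all of $A_1$, all of $A_2$, and those alternating words. You also need the identity on all of $A_2$ (and on the reduced part of $A_1$), e.g.\ that for $x\in A_{p_2}$ one has $\varphi(u_exu_{\rev{e}})=\varphi_{p_2}(x)$, obtained by splitting $x=\EE_e^r(x)+\PP_e^r(x)$ and noting that $u_e\,\PP_e^r(x)\,u_{\rev{e}}$ is reduced while $u_e\,\EE_e^r(x)\,u_{\rev{e}}=s_e\circ r_e^{-1}\circ\EE_e^r(x)$ has the right $\varphi_{p_1}$-value by the compatibility of the graph of states. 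This is precisely the content of step (1) of the paper's proof, and it is of the same routine nature as the computations you already perform, so it is an omission rather than an obstruction.
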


\begin{proof}
Define the faithful unital $*$-homomor\-phisms $\rho_{k}: A_{k}\rightarrow P(p_1)$ by $\rho_{1}(x) = x$ and $\rho_{2}(x) = u_{e}xu_{\rev{e}}$. Setting $\imath_{1} = s_{e}$ and $\imath_{2} = r_{e}$, we have
\begin{equation*}
\rho_{1}\circ i_{1} = \rho_{2}\circ i_{2} = s_{e}: B\rightarrow P(p_1).
\end{equation*}
Observe that $P(p_1)$ is generated as a C*-algebra by $\rho_{k}(A_{k})$ for $k = 1, 2$. Let $\EE$ be the canonical GNS-faithful conditional expectation from $P(p_1)$ to $B_e^s$ i.e. $\EE=\EE_e^s\circ\EE_{A_{p_1}}$. By the universal property of the reduced amalgamated free product, it is enough to prove that
\begin{enumerate}
\item $\EE\circ\rho_{k}(x) = \rho_{k}\circ \EE_{k}(x)$ for $x\in A_k$ and $k = 1, 2$.
\item For any $n\geqslant 2$, $a_{1}, \dots, a_{n}$ with $a_{l}\in A_{l_{k}}\ominus i_{l_{k}}(B)$ and $l_{k}\neq l_{k+1}$ for all $k$, one has
$\EE(\rho_{l_{1}}(a_{1}) \dots \rho_{l_{n}}(a_{n})) = 0$.
\end{enumerate}

\noindent $(1).$ We prove it for $k = 2$ (it is obvious for $k = 1$). Let $x\in A_{2}$. We may suppose that $x$ is in $A_{p_2}$ or is a reduced operator in $A_2$. Recall that $\EE_{2} = \EE_{e}^{r}\circ \EE_{A_{p_{2}}}$. If $x = r_{e}(b)\in B_{e}^{r}$, then
\begin{equation*}
\EE(\rho_{2}(x)) = \EE(u_{e}r_{e}(b)u_{\rev{e}}) = \EE(s_{e}(b)) = s_{e}(b) \text{ and } \rho_{2}\circ \EE_{2}(x) = u_{e}r_{e}(b)u_{\rev{e}} = s_{e}(b).
\end{equation*}
If $x\in A_{p_{2}}\ominus B_{e}^{r}$, then $\rho_{2}(x) = u_{e}xu_{\rev{e}}$ is a reduced operator in $P(p_1)$ hence $\EE(\rho_{2}(x)) = 0$. Finally, if $x= a_{0}u_{e_{1}} \dots u_{e_{n}}a_{n}$ is a reduced operator in $A_{2}$ then $\EE_2(x)=0$ and,  since $e_{1}, e_{n} \notin\{e, \rev{e}\}$, $\rho_{2}(x) = u_{e}xu_{\rev{e}}$ is a reduced operator in $P(p_1)$ and $\EE(\rho_{2}(x)) = 0$.

\vspace{0.2cm}

\noindent $(2).$ Let $n\geqslant 2$ and $a_{1}, \dots, a_{n}$ with $a_{k}\in A_{l_{k}}\ominus i_{l_{k}}(B)$ and $l_{k}\neq l_{k+1}$. We may and will assume that all the $a_{k}$'s are either in $A_{p_{l_{k}}}\ominus i_{l_{k}}(B)$ or are reduced operators in $A_{l_{k}}$. Since the edges appearing in the elements $a_{k}$ differ from $e$ and $\rev{e}$, the operator $\rho_{l_{1}}(a_{1}) \dots \rho_{l_{n}}(a_{n})$ is always reduced in $P(p_1)$. Hence, $\EE(\rho_{l_{1}}(a_{1}) \dots \rho_{l_{n}}(a_{n})) = 0$.
\end{proof}

\noindent\emph{Case 2: The graph $\mathcal{G}'$ is connected.} Fix a maximal subtree $\mathcal{T}\subset \Gr'$ and let $g$ be the unique geodesic path in $\mathcal{T}$ from $p_{1}$ to $p_{2}$. Let $A$ be the reduced fundamental C*-algebra in $p_{1}$ of the graph of C*-algebras restricted to $\Gr'$, set $B = B^{s}_{e}\subset A$ and define a faithful unital $*$-homomorphism $\theta: B\rightarrow A$ by $\theta(x) = u_{g}[r_{e}\circ s_{e}^{-1}(x)]u_{g}^{*}$. Let $\EE_{1} = \EE_{e}^{s}$ be the canonical GNS-faithful conditional expectation from $A$ onto $B$ and set $\EE_{-1} = u_{g}\EE_{e}^{r}(u_{g}^{*} . u_{g})u_{g}^{*}$, which is a GNS-faithful conditional expectation from $A$ onto $\theta(B)$. Let $\HNN(A, B, \theta)$ be the reduced HNN extension with respect to the conditional expectations $\EE_{\epsilon}$, $\epsilon\in\{-1, 1\}$ and let $u\in\mathcal{U}(C)$ be the "stable letter" (see \cite{fima2012k}).

\begin{lem}\label{lem:hnnextension}
There exists a unique $*$-isomorphism $\rho: \HNN(A, B, \theta)\rightarrow P(p_1)$ such that,
\begin{equation*}
\rho(u) = u_{g}u_{\rev{e}}\quad\text{and}\quad\rho(x) = x\quad\text{for all}\quad x\in A.
\end{equation*}
Moreover, $\rho$ intertwines the canonical conditional expectations onto $B$.
\end{lem}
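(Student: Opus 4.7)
The plan is to mirror the proof of Lemma \ref{lem:freeproduct}, this time using the universal property of the reduced HNN extension from \cite{fima2012k} in place of that of the reduced amalgamated free product.

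First, I would verify the HNN defining relation. Taking the adjoint of $u_{\rev e} s_e(b') u_e = r_e(b')$ gives the companion identity $u_e r_e(b') u_{\rev e} = s_e(b')$. Setting $U := u_g u_{\rev e} \in P(p_1)$, which is clearly a unitary, a direct computation for $b = s_e(b') \in B$ yields
\begin{equation*}
U^* \theta(b) U = u_e u_g^* \cdot u_g r_e(b') u_g^* \cdot u_g u_{\rev e} = u_e r_e(b') u_{\rev e} = s_e(b') = b.
\end{equation*}
Proposition \ref{prop:universalreduced} applied to the subgraph $\Gr'$ (which inherits all the data of the graph of C*-algebras) provides a canonical faithful inclusion $\iota : A \hookrightarrow P(p_1)$. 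Combined with $U$ and the relation above, the universal property of the maximal HNN extension yields a $*$-homomorphism $\rho^{\max} : \HNN^{\max}(A, B, \theta) \to P(p_1)$ extending $\iota$ and sending the stable letter $u$ to $U$. Surjectivity is immediate: $u_g \in A$, so $u_{\rev e} = u_g^* U$ lies in the image, and together with $\iota(A)$ this generates all of $P(p_1)$.

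Second, to descend $\rho^{\max}$ to the reduced HNN extension, I would apply the analogue of Proposition \ref{prop:universalreduced} for reduced HNN extensions proved in \cite{fima2012k}. This requires a GNS-faithful conditional expectation $\EE : P(p_1) \to A$ such that $\EE \circ \rho^{\max}$ agrees with the composition of $\iota$ with the canonical expectation $E_C : \HNN(A, B, \theta) \to A$. I would construct $\EE$ by orthogonal projection of the path Hilbert module $\HH_{p_1, p_1}(\Gr) = \bigoplus_w \HH_w$ onto its sub-bimodule $\HH_{p_1, p_1}(\Gr') = \bigoplus_{w \subset \Gr'} \HH_w$ indexed by paths not using the edge $e$; the resulting compression is a GNS-faithful conditional expectation onto $A$ because the orthogonal decomposition is compatible with the distinguished cyclic vector $\Omega_{p_1}(p_1) \in \HH_\emptyset \subset \HH_{p_1, p_1}(\Gr')$.

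Third, the main technical point is to verify that $\rho^{\max}$ sends every HNN-reduced word $a_0 U^{\epsilon_1} a_1 \cdots U^{\epsilon_n} a_n$ into $\ker \EE$. Substituting $U = u_g u_{\rev e}$ and $U^* = u_e u_g^*$ produces a word in $P(p_1)$ alternating elements of $A$ with the edges $u_e, u_{\rev e}$. Decomposing each $a_i \in A$ as its expectation onto $A_{p_1}$ plus a sum of reduced operators along paths in $\Gr'$, a case analysis patterned on Lemma \ref{lem:product} shows that each resulting summand is a reduced operator in $P(p_1)$ along a path in $\Gr$ which uses $e$ or $\rev e$ at least once, and hence lies in $\ker \EE$. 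The HNN-reducedness assumptions — $a_i \in \ker \EE_1$ when $\epsilon_i = -1$ is followed by $\epsilon_{i+1} = 1$, and $a_i \in \ker \EE_{-1}$ in the dual case — are precisely the conditions needed to prevent the only collapse that could occur, namely the relation $u_{\rev e} s_e(b) u_e = r_e(b)$ (or its adjoint) at the outer $u_e$'s and $u_{\rev e}$'s sandwiching each $a_i$. GNS-faithfulness of $\EE$ then upgrades $\rho^{\max}$ to the desired $*$-isomorphism $\rho$ intertwining the canonical conditional expectations.

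The main obstacle is this third step, and more precisely the bookkeeping required to track the interaction between the geodesic word $u_g$ hidden inside each copy of $U$ and the reduced components of the adjacent $a_i$'s. Unlike in Lemma \ref{lem:freeproduct}, where the substituted letters $u_e, u_{\rev e}$ cannot interfere with the internal structure of the $A_k$ factors, here the word $u_g$ can in principle interact with a reduced operator in $A$ through its first or last edge; a careful case analysis is needed to show these interactions preserve reducedness and that the HNN condition exactly neutralizes the genuine cancellation introduced by the outer $u_e$'s and $u_{\rev e}$'s.
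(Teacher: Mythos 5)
Your proposal follows essentially the same skeleton as the paper's proof: verify the covariance relation for $U=u_{g}u_{\rev{e}}$, identify $A$ inside $P(p_1)$ via Proposition \ref{prop:universalreduced}, and then invoke the universal property of the reduced HNN extension from \cite{fima2012k} by checking that every HNN-reduced word, after substituting $U=u_gu_{\rev{e}}$ and absorbing the copies of $u_g$ into the adjacent $a_i$'s, becomes a reduced operator of $P(p_1)$ whose path uses $e$ or $\rev{e}$. Two deviations are worth flagging. First, the paper does not manufacture a new conditional expectation onto $A$ by compressing to the $\Gr'$-submodule; it uses $\EE'=\EE_e^s\circ\EE_{A_{p_1}}:P(p_1)\to B$, which is GNS-faithful for free (composition of GNS-faithful expectations, with $\EE_{A_{p_1}}$ coming from Proposition \ref{prop:PquasiGNS}) and is exactly the expectation appearing in the ``moreover'' clause. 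Your compression does kill the relevant words (by Lemma \ref{lem:product}, no edge of a path in $\Gr'$ can cancel against $e$ or $\rev{e}$), but you would still owe a proof that the compression takes values in $A$ and is GNS-faithful — work the paper's choice avoids entirely. Second, your statement of the HNN-reducedness conditions has the two cases interchanged: with your normalization $U b U^{*}=\theta(b)$, the collapse in $\cdots U a_i U^{-1}\cdots$ (i.e.\ $\epsilon_i=1$, $\epsilon_{i+1}=-1$) occurs when $a_i\in B$, so that case requires $a_i\in\ker\EE_1$, while $\epsilon_i=-1$, $\epsilon_{i+1}=1$ requires $a_i\in\ker\EE_{-1}$; as written you have it backwards, which would matter when carrying out the case analysis you defer in your third step. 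That case analysis (the paper's explicit list of replacements $a_k'\in\{a_k,\,a_ku_g,\,u_{\rev{g}}a_k,\,u_{\rev{g}}a_ku_g\}$ according to the signs $\epsilon_k,\epsilon_{k+1}$) is the real content of the proof and is only sketched in your proposal, but the ideas you name — that $u_g$ lives in $\T\subset\Gr'$ and hence cannot cancel against $e$ or $\rev{e}$, and that the HNN conditions neutralize the only possible collapse at the $u_e$/$u_{\rev{e}}$ junctions — are precisely the ones the paper uses.
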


\begin{proof}
Set $v = u_{g}u_{\rev{e}}\in P(p_1)$ and view $A$ as a subalgebra of $P(p_1)$ in a canonical way, by the universal property of Proposition \ref{prop:universalreduced}. Let $\EE' = \EE_{e}^{s}\circ\EE_{A_{p_{1}}}$ and note that $\EE'$ is a GNS-faithful conditional expectation from $P(p_1)$ onto $B$. Observe that $vbv^{*} = \theta(b)$ for every $b\in B$ and that $P(p_1)$ is generated, as a C*-algebra, by $A$ and $v$. For $\epsilon\in \{-1, 1\}$, we set
\begin{equation*}
B_{\epsilon} = \left\{\begin{array}{ccc}
B & \text{if} & \epsilon = 1 \\
\theta(B) & \text{if} & \epsilon = -1
\end{array}\right.
\end{equation*}
By the universal property \cite[Proposition 3.2]{fima2012k} of the reduced HNN extension, it is enough to check that for all $n\geqslant 1$, $a_{0}, \dots a_{n}\in A$ and $\epsilon_{1}, \dots, \epsilon_{n}\in \{-1,1\}$ such that $a_{k}\in A\ominus B_{\epsilon_{k}}$ whenever $\epsilon_{k}\neq \epsilon_{k+1}$, one has
\begin{equation*}
\EE'(a_{0}v^{\epsilon_{1}} \dots v^{\epsilon_{n}}a_{n}) = 0.
\end{equation*}
To prove this, we may assume that $a_{k}\in A$ is either a reduced operator or lies in $A_{p_{1}}$. Let $x = a_{0}v^{\epsilon_{1}} \dots v^{\epsilon_{n}}a_{n}$ be a generic element. By suitably selecting elements $a_{k}'$ for $0\leqslant k \leqslant n$ in the following sets:
\begin{itemize}
\item $a_{0}'\in \{a_{0}, a_{0}u_{g}\}$
\item $a_{n}'\in \{a_{n}, u_{\rev{g}}a_{n}\}$
\item $a_{k}'\in \{a_{k}, u_{\rev{g}}a_{k}u_{g}\}$ if $\epsilon_{k+1}\neq\epsilon_{k}$
\item $a_{k}'\in \{a_{k}u_{g}, u_{\rev{g}}a_{k}\}$ if $\epsilon_{k+1} = \epsilon_{k}$
\end{itemize}
we may write $x = a'_{0}u_{\rev{e}}^{\epsilon_{1}} \dots u_{\rev{e}}^{\epsilon_{n}}a'_{n}$. Using the relations of the C*-algebra, we can write each $a_{k}'$ as a sum of reduced operators from $p_i$ to $p_j$, $i,j\in\{1,2\}$ and with edges only in $\Gr'$ and elements of $A_{p_{1}}$ or $A_{p_{2}}$. Hence, we may assume that all the $a_k'$ are reduced such that the edges appearing in the reduced expression are not equal to $e$ or $\overline{e}$ or $a_k'\in A_{p_1}\cup A_{p_2}$. Moreover, we will then have:
\begin{itemize}
\item If $\epsilon_{1} = -1$, $a_{0}'$ is a reduced operator in $A$ or $a_{0}\in A_{p_{1}}$.
\item If $\epsilon_{1} = 1$, $a_{0}'$ is reduced operator from $p_{1}$ to $p_{2}$ or, if $p_{1} = p_{2}$, $a_{0}\in A_{p_{1}}$.
\item If $\epsilon_{n} = 1$, $a_{n}'$ is a reduced operator in $A$ or $a_{n}'\in A_{p_{1}}$.
\item If $\epsilon_{n} = -1$, $a_{n}'$ is reduced operator from $p_{2}$ to $p_{1}$ or, if $p_{1} = p_{2}$, $a_{n}'\in A_{p_{1}}$.
\end{itemize}
and, for $1\leqslant k\leqslant n-1$,
\begin{itemize}
\item If $\epsilon_{k} = 1$ and $\epsilon_{k+1} = -1$, $a_{k}'$ is reduced in $A$ or $a_{k}\in A_{p_{1}}\ominus B_{e}^{s}$.
\item If $\epsilon_{k} = -1$ and $\epsilon_{k+1} = 1$, $a_{k}'$ is a reduced operator from $p_{2}$ to $p_{2}$ or $a_{k}\in A_{p_{2}}\ominus B_{e}^{r}$.
\item If $\epsilon_{k} = 1$ and $\epsilon_{k+1} = 1$, $a_{k}'$ is a reduced operator from $p_{1}$ to $p_{2}$ or, if $p_{1} = p_{2}$, $a_{k}'\in A_{p_{1}}$.
\item If $\epsilon_{k} = -1$ and $\epsilon_{k+1} = -1$, $a_{k}'$ is a reduced operator from $p_{2}$ to $p_{1}$ or, if $p_{1} = p_{2}$, $a_{k}'\in A_{p_{1}}$.
\end{itemize}
Summing up, we see that $x$ is always a reduced operator in $P(p_1)$. Hence, $\EE'(x) = \EE_{e}^{s}\circ\EE_{A_{p_{1}}}(x) = 0$. This concludes the proof.
\end{proof}

\noindent Combining Lemma \ref{lem:freeproduct} and Lemma \ref{lem:hnnextension}, we get the following proposition by a straightforward induction.

\begin{prop}\label{prop:unscrewingfinite}
Let $(\Gr, (A_{q})_{q}, (B_{e})_{e})$ be a \emph{finite} graph of C*-algebras. Then, the reduced fundamental C*-algebra $\pi_{1}(\Gr, (A_{p})_{p}, (B_{e})_{e})$ is isomorphic to an iteration of amalgamated free products and HNN extensions of vertex algebras amalgamated over edge algebras.
\end{prop}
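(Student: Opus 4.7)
The plan is to induct on the number of unoriented edges, i.e.\ on $\#(\E(\Gr)/\sim)$ where $e\sim\rev{e}$.

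For the base case, a finite connected graph with no edges is a single vertex $\{p\}$, and then the reduced fundamental C*-algebra is just $A_p$ itself (there are no reduced operators other than elements of $A_p$, and the whole construction trivially collapses). This is vacuously an iterated amalgamated free product/HNN extension (zero iterations).

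For the inductive step, suppose $\Gr$ has $n\geqslant 1$ unoriented edges. Pick any edge $e\in\E(\Gr)$ with inverse $\rev{e}$ and form $\Gr'$ by deleting both $e$ and $\rev{e}$ (keeping all vertices and the remaining edges, together with their associated C*-algebras, states, and embeddings). I would split into the two cases treated above. If $\Gr'$ is disconnected, let $\Gr_1, \Gr_2$ be its two connected components (containing $p_1=s(e)$ and $p_2=r(e)$ respectively). Each $\Gr_k$ inherits from $\Gr$ the structure of a graph of C*-algebras with faithful states, and each has strictly fewer unoriented edges than $\Gr$. By Lemma \ref{lem:freeproduct},
\[
P(p_1) \;\cong\; A_1 \underset{B_e}{*} A_2,
\]
where $A_k$ is the reduced fundamental C*-algebra of $\Gr_k$. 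By the induction hypothesis, each $A_k$ is an iteration of amalgamated free products and HNN extensions of vertex algebras over edge algebras, and tacking on the outer amalgamation over $B_e$ gives the desired conclusion for $P(p_1)$. If instead $\Gr'$ is connected, it is again a graph of C*-algebras with strictly fewer unoriented edges, and Lemma \ref{lem:hnnextension} gives
\[
P(p_1) \;\cong\; \HNN\!\left(A,B_e^s,\theta\right),
\]
where $A$ is the reduced fundamental C*-algebra of $\Gr'$. Applying the induction hypothesis to $A$ and adding the outer HNN extension over the edge algebra $B_e^s$ finishes this case.

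The only points that require a bit of care are verifying that the restricted data $(\Gr_k,(A_q)_{q\in \V(\Gr_k)},(B_f)_{f\in\E(\Gr_k)})$ really is a graph of C*-algebras with faithful states (which is immediate since all the requirements are local at vertices/edges, and we have just forgotten some of them), and that the outer amalgamation/HNN data in the recursion step is indeed an edge algebra of $\Gr$, namely $B_e$ (respectively $B_e^s\cong B_e$). Neither of these is an obstacle. I do not foresee any serious difficulty; the only genuine work has already been done in Lemmas \ref{lem:freeproduct} and \ref{lem:hnnextension}, and the induction is strictly decreasing in the number of unoriented edges, so terminates in at most $n$ steps.
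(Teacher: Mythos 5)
Your proposal is correct and is essentially the paper's own argument: the paper proves this proposition by exactly the induction on the number of edges you describe, removing an edge and invoking Lemma \ref{lem:freeproduct} in the disconnected case and Lemma \ref{lem:hnnextension} in the connected case. You have simply written out the induction that the paper calls ``straightforward.''
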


\noindent Using an inductive limit argument, we can extend the previous result to arbitrary graph.

\begin{thm}\label{thm:unscrewing}
Let $(\Gr, (A_{q})_{q}, (B_{e})_{e})$ be a graph of C*-algebras. Then, the reduced fundamental C*-algebra $\pi_{1}(\Gr, (A_{q})_{q}, (B_{e})_{e})$ is isomorphic to an inductive limit of iterations of amalgamated free products and HNN extensions of vertex algebras amalgamated over edge algebras.

\end{thm}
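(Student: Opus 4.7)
The plan is to realize $\pi_{1}(\Gr,(A_{q})_{q},(B_{e})_{e})$ as a norm-inductive limit of reduced fundamental C*-algebras attached to finite subgraphs, and then apply Proposition \ref{prop:unscrewingfinite} to each finite piece. Since all C*-algebras in the paper are assumed to be separable, $\Gr$ may be taken countable, so I would fix a base vertex $p_{0}\in\V(\Gr)$ and choose an increasing exhaustion $\Gr_{1}\subset\Gr_{2}\subset\cdots$ of $\Gr$ by finite connected subgraphs each containing $p_{0}$, with $\Gr=\bigcup_{n}\Gr_{n}$. For each $n$, let $P_{n}$ denote the reduced fundamental C*-algebra in $p_{0}$ of the restricted graph of C*-algebras $(\Gr_{n},(A_{q})_{q\in\V(\Gr_{n})},(B_{e})_{e\in\E(\Gr_{n})})$ equipped with the restricted conditional expectations and states. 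By Proposition \ref{prop:unscrewingfinite}, each $P_{n}$ is already an iteration of amalgamated free products and HNN extensions of vertex algebras over edge algebras.

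Next, I would build a compatible system of embeddings $\iota_{n}\colon P_{n}\hookrightarrow P_{n+1}$ by invoking the universal property of Proposition \ref{prop:universalreduced}. Inside $P_{n+1}$ sit the faithful representations of the vertex algebras $A_{q}$, $q\in\V(\Gr_{n})$, together with the canonical unitaries $u_{e}$, $e\in\E(\Gr_{n})$; these satisfy precisely the defining relations of $P_{n}$. Let $A^{(n)}\subset P_{n+1}$ be the C*-subalgebra they generate. The fundamental conditional expectation $\EE_{A_{p_{0}}}$ on $P_{n+1}$ is GNS-faithful and annihilates every reduced operator, so its restriction to $A^{(n)}$ remains a GNS-faithful conditional expectation onto $A_{p_{0}}$ which vanishes on every reduced word built from $\Gr_{n}$-data (such a word remains reduced inside $P_{n+1}$). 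Proposition \ref{prop:universalreduced} then furnishes the desired $*$-isomorphism $\iota_{n}\colon P_{n}\xrightarrow{\sim}A^{(n)}\subset P_{n+1}$, which is the identity on $A_{p_{0}}$ and sends each reduced word in $P_{n}$ to the syntactically identical reduced word in $P_{n+1}$. The very same construction embeds each $P_{n}$ into $P:=\pi_{1}(\Gr,(A_{q})_{q},(B_{e})_{e})$ compatibly with the $\iota_{n}$'s.

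Finally, I would show that $P$ is the norm-closure of $\bigcup_{n}P_{n}$, which gives $P\cong\varinjlim P_{n}$. This uses the observation from the remark following the definition of reduced operators that $A_{p_{0}}$ together with the reduced words from $p_{0}$ to $p_{0}$ spans a dense $*$-subalgebra of $P$; each such word involves only finitely many vertices and edges and therefore lies in some $P_{n}$. Combined with Proposition \ref{prop:unscrewingfinite} applied to each level of the inductive system, this yields the claimed description of $P$ as an inductive limit of iterations of amalgamated free products and HNN extensions.

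The main technical obstacle I expect is the careful verification that the hypotheses of Proposition \ref{prop:universalreduced} are satisfied for each transition $P_{n}\to P_{n+1}$: one must confirm that the restriction of $\EE_{A_{p_{0}}}$ to $A^{(n)}$ is still GNS-faithful (which, being the restriction of a GNS-faithful map on $P_{n+1}$, follows at once from $\EE_{A_{p_{0}}}(x^{*}x)=0\Rightarrow x=0$) and that it annihilates precisely the $\Gr_{n}$-reduced operators, so that $\iota_{n}$ is an \emph{isomorphism} onto its image and not merely a surjection. Once these compatibilities are in place the inductive-limit description follows.
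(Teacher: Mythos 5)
Your proposal is correct and follows essentially the same route as the paper: restrict to finite connected subgraphs containing $p_{0}$, apply Proposition \ref{prop:unscrewingfinite} to each, use Proposition \ref{prop:universalreduced} to realize the finite-subgraph algebras compatibly inside the big one, and conclude by density of the span of $A_{p_{0}}$ and the reduced operators. The only cosmetic difference is that the paper takes the inductive limit over the directed set of \emph{all} finite connected subgraphs containing $p_{0}$ rather than a countable increasing exhaustion, which avoids having to invoke countability of $\Gr$.
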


\begin{proof}
Fix $p_{0}\in\V(\Gr)$ and let $A$ be the fundamental C*-algebra in $p_{0}$ of the graph of C*-algebras. Let $C_{f}(\Gr, p_{0})$ be the directed set of connected finite subgraphs of $\Gr$ containing $p_{0}$ ordered by inclusion. For any $\mathcal{K}\in C_{f}(\Gr, p_{0})$ let $A_{\mathcal{K}}$ be the reduced fundamental C*-algebra in $p_{0}$ of the graph of C*-algebras restricted to $\mathcal{K}$. By the universal property of Proposition \ref{prop:universalreduced}, we can view $A_{\mathcal{K}}$ as a subalgebra of $A$ in a canonical way. If $\mathcal{K}_{1}, \mathcal{K}_{2}\in C_{f}(\Gr, p_{0})$ are such that $\mathcal{K}_{1}\subset \mathcal{K}_{2}$ we can also identify in a canonical way $A_{\mathcal{K}_{1}}$ with a subalgebra of $A_{\mathcal{K}_{2}}$. This means that we have an inductive system of unital C*-algebras $(A_{\mathcal{K}})_{\mathcal{K}\in C_{f}(\Gr, p_{0})}$. Let
\begin{equation*}
A_{\infty} = \overline{\bigcup_{\mathcal{K}\in C_{f}(\Gr, p_{0})}A_{\mathcal{K}}}\subset A
\end{equation*}
be the inductive limit of this system. Our claim is that $A_{\infty}$ is in fact equal to $A$. To prove it, it is enough to prove that any reduced operator $x = a_{0}u_{e_{1}} \dots u_{e_{n}}a_{n}\in A$ lies in $A_{\infty}$. In fact, such an operator $x$ is in $A_{\mathcal{K}}$ where $\mathcal{K}$ is a finite connected subgraph of $\Gr$ containing the edges $e_{1}, \dots, e_{n}$ and the vertices $p_{0}, r(e_{1}), \dots r(e_{n-1})$, hence the result.
\end{proof}

\noindent As an application, we can give a permanence property for exactness. Other permanence properties will be proved later on for quantum groups (Section \ref{sec:fundamentalqgroup}) and for graphs of von Neumann algebras (Section \ref{sec:appendix}).

\begin{cor}\label{cor:exact}
Let $(\Gr, (A_{q})_{q}, (B_{e})_{e})$ be a graph of C*-algebras. Then, the reduced fundamental C*-algebra $\pi_{1}(\Gr, (A_{p})_{p}, (B_{e})_{e})$ is exact if and only if all the C*-algebras $A_{p}$ are exact.
\end{cor}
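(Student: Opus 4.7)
The plan is to prove the two implications separately, using the unscrewing machinery just developed for the nontrivial direction.

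For the easy implication, suppose $\pi_{1}(\Gr, (A_{q})_{q}, (B_{e})_{e})$ is exact. By the universal property of Proposition \ref{prop:universalreduced}, each vertex algebra $A_{p}$ sits as a unital C*-subalgebra of the reduced fundamental C*-algebra. Since exactness passes to C*-subalgebras, each $A_{p}$ is exact.

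For the converse, assume every $A_{p}$ is exact. Since $B_{e}$ embeds into $A_{s(e)}$ via $s_{e}$, every edge algebra $B_{e}$ is exact too. By Theorem \ref{thm:unscrewing}, $\pi_{1}(\Gr, (A_{q})_{q}, (B_{e})_{e})$ is an inductive limit of C*-algebras obtained from the $A_{p}$'s by finitely many iterations of reduced amalgamated free products and reduced HNN extensions, in each case amalgamated over an edge algebra $B_{e}$, and in each case the amalgamation comes with GNS-faithful conditional expectations (built into the construction of Section \ref{subsec:unscrewing}, cf. Lemmas \ref{lem:freeproduct} and \ref{lem:hnnextension}). The plan is then to combine three standard permanence facts: (i) a reduced amalgamated free product of exact C*-algebras over a common exact C*-subalgebra with GNS-faithful conditional expectations is exact (Dykema's theorem); (ii) a reduced HNN extension of an exact C*-algebra over an exact subalgebra with GNS-faithful conditional expectations is exact (this is the analogous statement for HNN extensions, which can for instance be deduced from the fact, proved in \cite{fima2012k}, that such an HNN extension embeds into a reduced amalgamated free product with a circle action, together with (i)); and (iii) an inductive limit of exact C*-algebras is exact (standard).

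Thus one proceeds by induction on the number of edges in the finite subgraph: at each step of the unscrewing of Proposition \ref{prop:unscrewingfinite}, the algebra being built is either an amalgamated free product or an HNN extension of algebras that are exact by the induction hypothesis, with the amalgamating edge algebra exact as noted above, so (i) and (ii) apply. Passing to the inductive limit via (iii) yields exactness of the full fundamental C*-algebra.

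The main obstacle is not conceptual but bibliographical: one must invoke the correct permanence results for reduced amalgamated free products and reduced HNN extensions, and in particular check that the GNS-faithfulness hypothesis of those results is indeed met at each unscrewing step. Both conditions are already verified in the proofs of Lemmas \ref{lem:freeproduct} and \ref{lem:hnnextension}, where the relevant canonical conditional expectations onto $B_{e}^{s}$ (respectively onto $B$ and $\theta(B)$) are shown to be GNS-faithful, so the machinery applies directly and no additional work is required beyond assembling these ingredients.
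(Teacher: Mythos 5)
Your proposal is correct and follows essentially the same route as the paper: subalgebra permanence for the easy direction, and Theorem \ref{thm:unscrewing} combined with exactness permanence for inductive limits, reduced amalgamated free products (Dykema) and reduced HNN extensions for the converse. The only cosmetic difference is that the paper cites Ueda's work directly for the HNN case rather than reducing it to an amalgamated free product.
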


\begin{proof}
The only if part comes from the fact that any subalgebra of an exact C*-algebra is again exact. For the if part, we can restrict to finite graphs since an inductive limit of exact C*-algebras is again exact. We know that exactness passes to amalgamated free products (see \cite[Thm 3.2]{dykema2004exactness})  and also to HNN extensions by \cite[Sec 7.4]{ueda2005hnn}. Combining this with Theorem \ref{thm:unscrewing}, we get the result.
\end{proof}

%%%%%%%%%%%%%%%%%%%%%%%%%%%%%%%%%%%%%%%%%%%%%%%%%%%%%%%%
\section{The fundamental quantum group of a graph of discrete quantum groups}\label{sec:fundamentalqgroup}
%%%%%%%%%%%%%%%%%%%%%%%%%%%%%%%%%%%%%%%%%%%%%%%%%%%%%%%%

\noindent We can now use the results of Section \ref{sec:graphCstar} to define and study the fundamental quantum group of a graph of discrete quantum groups.

\subsection{Definition of the fundamental quantum group}

\begin{de}
A \emph{graph of discrete quantum groups} is a tuple
$$(\Gr, (\G_{q})_{q\in \V(\Gr)}, (\G_{e})_{e\in \E(\Gr)}, (s_{e})_{e\in \E(\Gr)})\quad\text{satisfying the following properties:}$$
\begin{itemize}
\item $\Gr$ is a connected graph.
\item For every $q\in\V(\Gr)$ and every $e\in\E(\Gr)$, $\G_{q}$ and $\G_{e}$ are compact quantum groups.
\item For every $e\in \E(\Gr)$, $\G_{\rev{e}} = \G_{e}$.
\item For every $e\in\E(\Gr)$, $s_{e}: C_{\text{max}}(\G_{e}) \rightarrow C_{\text{max}}(\G_{s(e)})$ is a unital faithful $*$-homomorphism intertwining the coproducts (i.e. $\h{\G}_{e}$ is a discrete quantum subgroup of $\h{\G}_{s(e)}$).
\end{itemize}
\end{de}

\begin{rem}
As mentionned in Remark \ref{rem:discreteqgroups}, Pontryagin duality allows us to use only compact quantum groups when dealing with discrete quantum groups. That is the reason why we call the above object a graph of discrete quantum groups, even though there are only compact quantum groups in the definition.
\end{rem}

\noindent For the remainder of this section we fix a maximal subtree $\T\subset\Gr$ and we denote by $P_{m}$ the maximal fundamental C*-algebra of the graph of C*-algebras $(\Gr, (C_{\text{max}}(\G_{q}))_{q}, (C_{\text{max}}(\G_{e}))_{e})$ with respect to $\mathcal{T}$.

\vspace{0.2cm}

\noindent From a graph of discrete quantum groups, we also obtain another graph of C*-algebras, given by the reduced C*-algebras in the following way. For every $e\in\E(\Gr)$, the map $s_e$ induces a unital faithful $*$-homomorphism $s_e\,:\,C_{\text{red}}(\G_e)\rightarrow C_{\text{red}}(\G_{s(e)})$ which intertwines the coproduct (indeed, since $s_e$ intertwines the coproduct, it maps a unitary representation onto a unitary representation. Moreover, since $s_e$ is injective, it maps an irreducible representation onto an irreducible representation. Hence, $s_e$ preserves the Haar states). Let $\varphi_{s(e)}$ and $\varphi_{e}$ denote the Haar states of $\G_{s(e)}$ and $\G_{e}$ respectively. According to \cite[Prop 2.2]{vergnioux2004k}, $\varphi_{s(e)}\circ s_{e} = \varphi_{e}$ for every $e\in\E(\Gr)$ and there exists a unique conditional expectation $\EE_{e}^{s}: C_{\text{red}}(\G_{s(e)})\rightarrow s_e(C_{\text{red}}(\G_e))$ such that $\varphi_{s(e)} = \varphi_{e}\circ s_{e}^{-1}\circ \EE_{e}^{s}$. In particular, $\EE_{e}^{s}$ is automatically faithful (hence GNS-faithful) since $\varphi_{s(e)}$ is. This conditional expectation is also characterized by the following invariance property:
\begin{equation*}
(\ii\otimes \EE_{e}^{s})\circ\D_{s(e)} = (\EE_{e}^{s}\otimes \ii)\circ\D_{s(e)} = \D_{e}\circ s_{e}^{-1}\circ \EE_{e}^{s} = \D_{s(e)}\circ \EE_{e}^{s}.
\end{equation*}
We obtain a graph of C*-algebras with faithful states $(\mathcal{G},(C_{\text{red}}(\G_{p}),h_p),((C_{\text{red}}(\G_{e}),h_e))$, where $h_p$ and $h_e$ are the Haar states on $C_{\text{red}}(\G_{p})$ and $C_{\text{red}}(\G_{e})$ respectively. Until the end of this section, we fix a vertex $p_{0}\in \V(\Gr)$ and we denote by $P = P(p_{0})$ the reduced fundamental C*-algebra of $(\Gr, (C_{\text{red}}(\G_{q}))_{q}, (C_{\text{red}}(\G_{e}))_{e})$ in $p_{0}$ and by $\varphi$ the fundamental state on $P$. For $q\in\V(\Gr)$, let us denote by $\lambda_{q}$ the canonical surjective $*$-homomorphism $C_{\text{max}}(\G_{q})\rightarrow C_{\text{red}}(\G_q)$ which is the identity on $\text{Pol}(\G_q)$. By the universal property, we have a unique $*$-homomorphism
\begin{equation*}
\lambda: P_{m} \rightarrow P
\end{equation*}
such that $\lambda(u_{e}) = u^{p_{0}}_{e}$ for every $e\in\E(\Gr)$ and $\lambda(a) = \pi^{p_{0}}_{q, p_{0}}\circ\lambda_{q}(a)$ for every $q\in\V(\Gr)$ and every $a\in A_{q}$. In particular, $\lambda$ is injective on $\text{Pol}(\G_q)$ for every $q\in\V(\Gr)$.

\vspace{0.2cm}

\noindent By the universal property of Proposition \ref{prop:universal}, there is a unique unital $*$-homomorphism
\begin{equation*}
\D_{m}: P_{m}\rightarrow P_{m}\otimes P_{m}
\end{equation*}
such that for every $e\in \E(\Gr)$, and every $q\in \V(\Gr), a\in A_{q}$
$$
\D_{m}(u_{e}) = u_{e}\otimes u_{e}\quad\text{and}\quad \D_{m}(a) = \D_{q}(a).$$
Obviously, $P_{m}$ is generated as a C*-algebra by the group-like unitaries $u_{e}$ for $e\in\E(\Gr)$ and the elements $u^{\alpha}_{ij}$ for $\alpha\in \Irr(\G_{q})$, $1\leqslant i,j\leqslant \dim(\alpha)$ and $q\in\V(\Gr)$. The conditions of \cite[Definition 2.1']{wang1995free} being satisfied by these elements, $\G = (P_{m}, \D_{m})$ is a compact quantum group called the \emph{fundamental quantum group} of the graph of quantum groups.

\vspace{0.2cm}

\noindent For $p\in\V(\Gr)$ and $e\in\E(\Gr)$, set $M_p={\rm L}^{\infty}(\G_p)$ and $N_e={\rm L}^{\infty}(\G_e)$. Observe that, for any $e\in\E(\Gr)$, the map $s_e$ induces a unital normal faithful $*$-homomorphism, still denoted $s_e$, from $N_e$ to $M_{s(e)}$ which intertwines the coproducts, the Haar states and the modular groups. Hence, we get a graph of von Neumann algebras $(\mathcal{G},(M_q)_q,(N_e)e)$ and we denote by $M$ the fundamental von Neumann algebra in $p_0$. The hypothesis of section \ref{section:relationcstarvn} being satisfied, the fundamental state $\varphi$ on $P$ is faithful and the von Neumann algebra generated by $P$ in the GNS representation of $\varphi$ is $M$.

%%%%%%%%%%%%%%%%%%%%%%%%%%%%%%%%
\subsection{Representation theory and Haar state}
%%%%%%%%%%%%%%%%%%%%%%%%%%%%%%%%

Let us investigate the representation theory of the fundamental quantum group. For every $e\in\E(\Gr)$, the map $s_{e}$ induces an injective map, still denoted $s_{e}$, from $\Irr(\G_{e})$ to $\Irr(\G_{s(e)})\subset \Irr(\G)$ and $u_{e}\in P_{m}$ is an irreducible representation of dimension $1$. The following definition will be convenient to describe the irreducible representations of $\G$.

\begin{de}
A unitary representation $u$ of $\G$ is said to be \emph{reduced} if
$$
u = u^{\alpha_{0}}\otimes u_{e_{1}}\otimes \dots \otimes u_{e_{n}}\otimes u^{\alpha_{n}}$$
where $n\geqslant 1$ and
\begin{itemize}
\item $(e_{1}, \dots, e_{n})$ is a path in $\Gr$ from $p_{0}$ to $p_{0}$
\item $\alpha_{0} \in \Irr(\G_{p})$ and for $1\leqslant i\leqslant n$, $\alpha_{i}\in \Irr(\G_{r(e_i)})$
\item For all $1\leqslant i\leqslant n-1$, $\alpha_{i}\notin s_{e_{i+1}}(\Irr(\G_{e_{i+1}}))$ whenever $e_{i+1}=\rev{e}_{i}$.
\end{itemize}
\end{de}

\begin{thm} We have,
\begin{enumerate}
\item For an irreducible unitary representation $u$ of $\G$ one of the following holds:
\begin{itemize}
\item $u=u_e$ for some $e\in \E(\Gr)$.
\item $u$ is an irreducible representation of $\G_{q}$ for some $q\in\V(\Gr)$.
\item $u$ is unitarily equivalent to a subrepresentation of a reduced representation.
\end{itemize}
Hence, the set $\bigcup_{q\in\V(\Gr)}\Irr(\G_{q})\cup\{u_{e}: e\in\E(\Gr)\}$ generates the representation category of $\G$.
\item The Haar state $h$ of $\G$ is given by $h = \varphi\circ\lambda$.
\item We have $C_{\text{max}}(\G)=P_m$, $C_{\text{red}}(\G)=P$, ${\rm L}^{\infty}(\G)=M$, and $\lambda = \lambda_{\G}$.
\end{enumerate}
\end{thm}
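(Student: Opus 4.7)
The plan is to handle the three statements in order, treating $P_m=C(\G)$ throughout and exploiting the description of a dense $*$-subalgebra of $P_m$ from Remark \ref{rem:pathmax}.

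\emph{Part (1).} Remark \ref{rem:pathmax} shows that $P_m$ is densely spanned by $A_{p_0}$ together with operators $a_0 u_{e_1}\cdots u_{e_n}a_n$ along paths from $p_0$ to $p_0$. Specializing each $a_i$ to a matrix coefficient of some $u^{\alpha_i}\in\Irr(\G_{r(e_i)})$ identifies $\Pol(\G)$ with the linear span of matrix coefficients of tensor products $u^{\alpha_0}\otimes u_{e_1}\otimes \cdots \otimes u_{e_n}\otimes u^{\alpha_n}$; the extreme cases $n=0$ and $n\geqslant 1$ with trivial $\alpha_i$'s cover respectively the vertex-group irreducibles and the $u_e$'s. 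Whenever $e_{i+1}=\rev{e}_i$ and $u^{\alpha_i}_{jk}$ happens to lie in the subalgebra $B^s_{e_{i+1}}$, the relation $u_{\rev{e}}s_e(b)u_e=r_e(b)$ absorbs it into its neighbours and shortens the path by two. Iterating this ``de-bracketing'' decomposes every such tensor product into a direct sum of reduced representations, together with possibly some basic pieces $u_e$ and irreducibles of the $\G_q$. Combining this with Woronowicz's decomposition theorem yields the announced classification.

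\emph{Part (2).} By uniqueness of the Haar state it suffices to verify that $\varphi\circ\lambda$ is a state that annihilates every matrix coefficient of a non-trivial irreducible of $\G$, which by part (1) breaks into three subcases. First, for a non-trivial $u^{\alpha}\in\Irr(\G_q)$, one checks the identity $\varphi\bigl(\pi^{p_0}_{q,p_0}(a)\bigr)=\varphi_q(a)$ for $a\in A_q$; this is obtained by unfolding the definition $\pi^{p_0}_{q,p_0}(a)=(u^{p_0}_{g_{qp_0}})^{*}\, a\, u^{p_0}_{g_{qp_0}}$, noticing that $u^{p_0}_{g_{qp_0}}(1_{A_{p_0}})=\Omega_{p_0}(q)$, and applying Lemma \ref{lem:states} with the roles of the root and the base exchanged. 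Second, for $u_e$ with $e\notin\T$, one rewrites $\lambda(u_e)=w_e^{p_0}$ as $u^{p_0}_{w}$ where $w$ is the concatenation $\rev{g_{s(e)p_0}}\cdot (e)\cdot g_{r(e)p_0}$; since $e\notin\T$, neither junction produces backtracking, so $w_e^{p_0}$ is a reduced operator with trivial coefficients and Proposition \ref{prop:PquasiGNS}(2) applies. Third, for a reduced representation $u^{\alpha_0}\otimes u_{e_1}\otimes \cdots \otimes u^{\alpha_n}$, the inner factors $u^{p_0}_{g_{r(e_k)p_0}}$ and $(u^{p_0}_{g_{s(e_{k+1})p_0}})^{*}$ inside consecutive $w_{e_k}^{p_0}$'s and $\pi^{p_0}_{r(e_k),p_0}$'s telescope (using $s(e_{k+1})=r(e_k)$ and $s(e_1)=r(e_n)=p_0$), rewriting $\lambda$ of a matrix coefficient as $a_0 u^{p_0}_{e_1}a_1\cdots u^{p_0}_{e_n}a_n$ with $a_i=\lambda_{r(e_i)}(u^{\alpha_i}_{\cdot\cdot})$; the condition $\alpha_i\notin s_{e_{i+1}}(\Irr(\G_{e_{i+1}}))$ from the definition of a reduced representation is then exactly the Peter--Weyl statement that $\EE^s_{e_{i+1}}(a_i)=0$, so Proposition \ref{prop:PquasiGNS}(2) concludes.

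\emph{Part (3).} Since $\Pol(\G)$ is the dense $*$-subalgebra of $P_m$ described in part (1), Proposition \ref{prop:universal} shows that every Hilbert space $*$-representation of $\Pol(\G)$ extends to $P_m$; hence $P_m$ satisfies the defining universal property of $C_{\text{max}}(\G)$. Given part (2), the null ideal of $\varphi\circ\lambda$ on $P_m$ equals $\ker\lambda$ by faithfulness of $\varphi$ on $P$, so the GNS construction of the Haar state on $P_m$ factors through $\lambda$ and, after this quotient, is canonically the GNS construction of $\varphi$ on $P$. This identifies $C_{\text{red}}(\G)$ with $P$ and $\lambda_\G$ with $\lambda$; taking double commutants yields ${\rm L}^{\infty}(\G)=M$ via the identification of $M$ already recalled just before the theorem.

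The main obstacle, both conceptually and in bookkeeping, is the third subcase of part (2): one must track precisely how $\lambda$ interacts with the trivialization $u_e=1$ for $e\in\T$ inside $P_m$ (which contrasts with the non-trivialization of the corresponding $u_e^{p_0}$'s inside $P$) and then verify that the resulting expression is a genuine reduced operator in the sense of Lemma \ref{lem:product}. The auxiliary unitaries $w_e^{p_0}$ and embeddings $\pi^{p_0}_{q,p_0}$ from Section \ref{quotient} are designed precisely so that the necessary telescoping goes through, but the verification requires careful manipulation.
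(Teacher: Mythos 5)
Your proposal is correct. Parts (1) and (3) follow essentially the same route as the paper: one identifies $\Pol(\G)$ with the span of coefficients of the tensor products $u^{\alpha_0}\otimes u_{e_1}\otimes\dots\otimes u^{\alpha_n}$ via the density of reduced operators (Remark \ref{rem:pathmax}/\ref{rem:redmax}), collapses backtracking pieces using $u_{e}r_e(b)u_{\rev e}=s_e(b)$, and then reads off the maximal/reduced identifications from Proposition \ref{prop:universal} and the faithfulness of $\varphi$ on $P$. Part (2) is where you genuinely diverge. The paper never invokes the classification of irreducibles in the strong form you use: it lets $\Q_m$ be the span of coefficients of reduced representations, observes that $\D_m(\Q_m)\subset\Q_m\odot\Q_m$ and that $\lambda(\Q_m)$ consists of reduced operators (killed by $\varphi$ via Proposition \ref{prop:PquasiGNS}), so both sides of the invariance identity $(\varphi\otimes\ii)\circ\D_m(x)=\varphi(x)1$ vanish on $\Q_m$, and then checks invariance on $C_{\text{max}}(\G_{p_0})$ directly. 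You instead use the characterization of the Haar state as the unique state annihilating coefficients of non-trivial irreducibles, which forces three separate verifications: the identity $\varphi\circ\pi^{p_0}_{q,p_0}=\varphi_q$ via Lemma \ref{lem:states}, the observation that $w^{p_0}_e$ is a reduced operator with trivial coefficients when $e\notin\E(\T)$, and the telescoping of $\lambda$ on coefficients of reduced representations (which is exactly the content of Section \ref{quotient}, so the bookkeeping you worry about is already set up there). Both routes are valid; the paper's is shorter and needs only the density statement rather than the full classification, while yours makes the vanishing of $h$ on each class of generators explicit and, as a by-product, shows that the trivial representation never occurs inside a reduced representation.
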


\begin{proof}
$(1)$. It is clear that the unitaries $u_{e}$ and the irreducible representations of the compact quantum groups $\G_{q}$ are irreducible representations of $\G$. Moreover, the closure of the $*$-algebra $\mathcal{A}$ generated by the coefficients of the aforementioned representations and the reduced representations contains, by definition of the conditional expectations, the linear span of $C_{\text{max}}(\G_{p_0})$ and the reduced operators from $p_0$ to $p_0$ in $P_m$, which is a dense $*$-subalgebra in $P_m$ (see Remark \ref{rem:redmax}). Since $\mathcal{A}$ is dense in $P_{m}$ and generated by coefficients of representations, it must contain the coefficients of all irreducible representations i.e. $\mathcal{A}=\text{Pol}(\G)$. This proves $(1)$.

\vspace{0.2cm}

\noindent $(2)$. Let $\Q_{m} \subset P_{m}$ be the linear span of the coefficients of all reduced representations. One can easily check that $\D_{m}(\Q_{m})\subset \Q_{m}\odot \Q_{m}$ and that $\lambda(\Q_{m})$ is spanned by reduced operators. Hence, for any $x\in \Q_{m}$, $(\varphi\otimes \ii)\circ\D_{m}(x) = (\ii\otimes \varphi)\circ\D_{m}(x) = 0 = \varphi(x).1$. Since the linear span of $\Q_{m}$ and $C_{\text{max}}(\G_{p_0})$ is a dense $*$-subalgebra of $P_{m}$, we only have to check the invariance property of $\varphi$ on $C_{\text{max}}(\G_{p_0})$, which is obvious.

\vspace{0.2cm}

\noindent $(3)$. The map $\lambda$ is surjective and $\varphi$ is faithful on $P$, hence $P$ is the reduced C*-algebra of $\G$. The universal property of Proposition \ref{prop:universal} implies that $P_{m}$ is the enveloping C*-algebra of $\Pol(\G)$, i.e. $P_{m}$ is the maximal C*-algebra of $\G$. Moreover, $\lambda = \lambda_{\G}$ since it is the identity on $\Pol(\G)$. Eventually, ${\rm L}^{\infty}(\G)=M$ because $M$ is the von Neumann algebra generated by $P$ in the GNS representation of $\varphi$.
\end{proof}

\begin{rem}
In the case of a free product \emph{without amalgamation}, the irreducible representations are exactly the reduced representations (together with the representations coming from the quantum groups). However, this fails as soon as one allows amalgamation, as shown in the example at the end of Section $2$ of \cite{vergnioux2004k}.
\end{rem}

\noindent Combining the results of this section with Examples \ref{ex:freeproduct} and \ref{ex:hnn}, we see that we recover both the amalgamated free product construction of \cite{wang1995free} and the HNN construction of \cite{fima2012k}.

%%%%%%%%%%%%%%%%%%%%%%%%%%%%%%%%
\subsection{Permanence properties}
%%%%%%%%%%%%%%%%%%%%%%%%%%%%%%%%

We give in this section some permanence results for approximation properties under the fundamental quantum group construction. Let us say that a unimodular discrete quantum group $\h{\G}$ is \emph{hyperlinear} if the von Neumann algebra ${\rm L}^{\infty}(\G)$ embeds into an ultraproduct $R^{\omega}$ of the hyperfinite II$_{1}$ factor $R$.

\begin{thm}
Let $(\Gr, (\G_{q})_{q}, (\G_{e})_{e})$ be a graph of discrete quantum groups.
\begin{enumerate}
\item $\h{\G}$ is exact if and only if all the vertex quantum groups (hence all the edge quantum groups) are exact.
\item $\h{\G}$ is unimodular if and only if all the vertex quantum groups (hence all the edge quantum groups) are unimodular.
\item If all the vertex quantum groups are unimodular and hyperlinear and if all the edge quantum groups are amenable, then $\h{\G}$ is hyperlinear.
\item If all the vertex quantum groups are unimodular and have the Haagerup property and if all the edge quantum groups are finite, then $\h{\G}$ has the Haagerup property.
\item If all the vertex quantum groups are unimodular and weakly amenable with Cowling-Haagerup constant $1$ and if all the edge quantum groups are finite, then $\h{\G}$ is weakly amenable with Cowling-Haagerup constant $1$.
\end{enumerate}
\end{thm}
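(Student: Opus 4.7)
The plan is to invoke Theorem \ref{thm:unscrewing} (and its von Neumann algebra analogue sketched in the appendix) to express the reduced fundamental C*-algebra $P$, respectively the fundamental von Neumann algebra $M = L^\infty(\G)$, as an inductive limit of iterated amalgamated free products and HNN extensions of vertex algebras over edge algebras. It then suffices, for each of the five assertions, to verify permanence under a single amalgamated free product step, a single HNN extension step, and an inductive limit.

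Part $(1)$ is precisely Corollary \ref{cor:exact}. For part $(2)$, the ``only if'' direction is immediate, since each $C_{\text{red}}(\G_q)$ sits inside $P$ with a state-preserving conditional expectation coming from the fundamental state. For the ``if'' direction, the compatibility $\varphi_{s(e)} \circ s_e = \varphi_e$ ensures that every edge inclusion is trace-preserving as soon as each vertex Haar state is tracial; then tracial amalgamated free products and tracial reduced HNN extensions over trace-preserving conditional expectations produce tracial algebras (cf.\ \cite{ueda2005hnn,fima2012k}), and traciality passes trivially to inductive limits. Hence the fundamental state $\varphi$, and so the Haar state of $\G$, is tracial.

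Parts $(3)$, $(4)$ and $(5)$ follow the same scheme: combine the unscrewing with the corresponding tracial von Neumann algebra permanence results. Hyperlinearity (i.e.\ Connes-embeddability) is preserved by amalgamated free products and HNN extensions over a hyperfinite subalgebra; the Haagerup property is preserved by amalgamated free products and HNN extensions over a finite-dimensional subalgebra (Boca, resp.\ Jolissaint--Valette style results); and weak amenability with Cowling--Haagerup constant $1$ is preserved by amalgamated free products and HNN extensions over a finite-dimensional subalgebra (Ricard--Xu and subsequent work). Under the stated hypotheses the edge von Neumann algebras fall into the required class—hyperfinite when $\G_e$ is amenable, finite-dimensional when $\G_e$ is finite—and the edge inclusions are trace-preserving by \cite[Prop.~2.2]{vergnioux2004k}. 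Since each of these three properties is stable under inductive limits of finite von Neumann algebras along trace-preserving unital embeddings, the conclusions for $\h{\G}$ follow.

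The main technical obstacle will be handling the HNN step uniformly across $(3)$, $(4)$ and $(5)$, since the existing literature concentrates on amalgamated free products. The cleanest route is to use the classical trick of realising a reduced HNN extension with base $B$ as a corner of an amalgamated free product over $M_2(B)$: this preserves traciality and the class (finite-dimensional or hyperfinite) of the base, so each HNN permanence statement is reduced to the corresponding free product permanence statement. A secondary technical point is that Theorem \ref{thm:unscrewing} is formulated at the C*-algebra level, so for $(3)$--$(5)$ one must transfer the unscrewing to the von Neumann algebraic setting; this is exactly the purpose of the appendix, which provides a faithful tracial state at each stage and identifies the completion of the inductive limit with $M$.
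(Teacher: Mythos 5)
Your proposal is correct and, for parts $(1)$, $(3)$, $(4)$ and $(5)$, follows essentially the same route as the paper: Corollary \ref{cor:exact} for exactness, and for $(3)$--$(5)$ the unscrewing of Theorem \ref{thm:unscrewing} transferred to the von Neumann setting via Proposition \ref{prop:quantumvonneumann}, with the HNN step reduced to an amalgamated free product by the corner trick $\HNN(M,N,\theta)\simeq e_{11}\left((M_{2}(\C)\otimes M)\ast_{N\oplus N}(M_{2}(\C)\otimes N)\right)e_{11}$ exactly as in Corollary \ref{cor:embedable}. The one place where you genuinely diverge is part $(2)$: you deduce traciality of the fundamental state by running the unscrewing once more and invoking permanence of traciality under tracial amalgamated free products, HNN extensions and inductive limits, whereas the paper reads it off directly from Proposition \ref{prop:modulargraph}, which exhibits the modular operator of the fundamental state as a direct sum of tensor products $\nabla_{s(e_{1})}\otimes\nabla_{r(e_{1})}\otimes\dots\otimes\nabla_{r(e_{n})}$ of the vertex modular operators, so that $\varphi$ is a trace if and only if every $\varphi_{q}$ is. Your route is perfectly valid (the edge inclusions are state-preserving, so each elementary step is a trace-preserving construction), but the modular-operator computation is more direct, is needed elsewhere in the appendix anyway, and avoids having to check traciality of reduced HNN extensions separately; on the other hand your argument is more uniform with the other four parts and would apply verbatim in situations where one has an unscrewing but no explicit GNS description of the fundamental state.
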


\begin{proof}
$(1)$ follows from Corollary \ref{cor:exact} while $(2)$ follows from Propositions \ref{prop:modulargraph} and \ref{prop:quantumvonneumann}. $(3)$ and $(4)$ are proved in Corollary \ref{cor:embedable}, by using again Proposition \ref{prop:quantumvonneumann} (see also \cite[Proposition 7.13 and 7.14]{Daws2013haagerup} for the cases of amalgamated free product and HNN extensions). Finally, $(5)$ is a  straightforward consequence of Theorem \ref{thm:unscrewing} and the permanence properties proved in \cite[Chap 2]{freslon2013proprietes}.
\end{proof}

%%%%%%%%%%%%%%%%%%%%%%%%%%%%%%%%
\section{K-amenability}\label{sec:kamenability}
%%%%%%%%%%%%%%%%%%%%%%%%%%%%%%%%

\noindent In this section we will illustrate the construction of the fundamental quantum group by generalizing the Julg-Valette theorem \cite{julg1984k}.

\begin{thm}\label{thm:kamenabilitygraph}
The fundamental quantum group of a graph of amenable discrete quantum groups is $K$-amenable.
\end{thm}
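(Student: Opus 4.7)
The plan is to construct explicitly a quantum Bass-Serre tree attached to the graph of quantum groups and to derive from it a Kasparov cycle representing the required element $\gamma \in \KK(C_{\text{red}}(\G), \C)$, thereby verifying K-amenability via the first characterization of Theorem \ref{thm:kamenabilitydefinition}. This approach generalizes the free product case treated in \cite{vergnioux2004k} and the HNN case treated in \cite{fima2012k}; the unscrewing Theorem \ref{thm:unscrewing} cannot be used directly for a naive induction because K-amenability is not, a priori, known to pass through amalgamated free products or HNN extensions of K-amenable discrete quantum groups over arbitrary common subgroups, so a global construction is needed.

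Fixing an orientation $\E(\Gr) = \E^{+}(\Gr) \sqcup \E^{-}(\Gr)$ and a base vertex $p_0$, I would introduce the two Hilbert $C_{\text{red}}(\G)$-modules
\[
\mathcal{E}_V = \bigoplus_{q \in \V(\Gr)} \HH_{p_0, q}, \qquad \mathcal{E}_E = \bigoplus_{e \in \E^{+}(\Gr)} \HH_{p_0, r(e)},
\]
each carrying the natural left action of $P(p_0) \cong C_{\text{red}}(\G)$ constructed in Section \ref{sec:graphCstar} (combined with the identifications of Proposition \ref{prop:base}). Between them I would define a Julg-Valette-type operator $F: \mathcal{E}_V \to \mathcal{E}_E$ that, on the summand indexed by a non-empty reduced path ending at its last edge, amputates that edge and places the result in the appropriate summand of $\mathcal{E}_E$, while $F$ vanishes on the summand indexed by $p_0$ itself. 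Passing to the GNS Hilbert spaces with respect to the Haar states $h_q$ then turns the data into a $\Z/2$-graded Hilbert space with an odd operator, and the explicit formulas of Lemma \ref{lem:product} make it possible to verify, directly on reduced operators and then on the dense $*$-subalgebra $\mathcal{R}(p_0)$ introduced in Section \ref{quotient}, that every commutator $[F, x]$ is compact (in fact, of finite rank on generators) and that $F^2 - 1$ vanishes off a one-dimensional subspace. This would produce the required class $\gamma \in \KK(C_{\text{red}}(\G), \C)$.

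The main obstacle is to show that $\lambda_{\G}^{*}(\gamma) = [\varepsilon]$ in $\KK(C_{\text{max}}(\G), \C)$. Here the hypothesis that every $\h{\G}_q$ is amenable enters decisively: by Proposition \ref{prop:amenability}, each counit $\varepsilon_q$ of $C_{\text{max}}(\G_q)$ factors through $\lambda_{\G_q}$, which supplies a distinguished vector in the GNS space of $h_q$ that behaves like a trivial representation vector under $C_{\text{red}}(\G_q)$. Following the schemes of \cite{vergnioux2004k} and \cite{fima2012k}, I would use these invariant vectors summand by summand to assemble an operator homotopy connecting $\lambda_{\G}^{*}(\gamma)$ to the degenerate cycle associated to $\varepsilon$: along the homotopy, the Julg-Valette operator is progressively deformed into a rank-one projection onto the "root" vector $\Omega_{p}(p_0)$, the continuity of the path being controlled by the amenability-provided approximations. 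Carrying this deformation out uniformly, and coherently, across all vertices and edges of a possibly infinite graph, rather than for a single edge as in the two special cases, is the delicate technical heart of the argument. Once the homotopy is established, Theorem \ref{thm:kamenabilitydefinition} delivers the K-amenability of $\h{\G}$.
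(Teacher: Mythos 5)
Your overall strategy (quantum Bass--Serre tree, Julg--Valette operator, homotopy of the pulled-back cycle to a degenerate one) is the same as the paper's, but there is a concrete error at the foundation of the construction: you propose to form the $\ell^{2}$-spaces of the tree by taking the GNS construction of the path modules with respect to the \emph{Haar states} $h_{q}$. The paper instead takes the GNS construction of $(\HH_{p_{0},q},\varepsilon_{q})$ and $(\HH_{p_{0},s(f)},\varepsilon_{s(f)}\circ\EE_{f}^{s})$, i.e.\ with respect to the \emph{counits}. This is not a cosmetic choice. With the Haar state each $L_{p_{0},q}$ would be a copy of $\Lde(\G)$ (Remark \ref{rem:GNSstates}), the analogue of $\ell^{2}(G)$ rather than of $\ell^{2}(G/G_{q})$; the ``amputate the last edge'' operator is then not even well defined (there is no canonical way to dispose of the last tensor leg $a_{n}\in A_{q}$ without evaluating it against a character), and the defect spaces $X_{a}$ of the commutators, which in the paper are finite-dimensional precisely because each $\eta_{p_{0},e_{k}}(\h{a}_{n}\otimes\cdots\otimes a_{k})$ is a single vector, become infinite-dimensional, so $\F\pi(a)-\rho(a)\F$ is no longer compact. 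This also means you have misplaced where amenability enters: it is needed from the very first step, since the counit $\varepsilon_{q}$ only exists on the \emph{reduced} vertex algebra $C_{\text{red}}(\G_{q})$ (over which $\HH_{p_{0},q}$ is a module) when $\h{\G}_{q}$ is amenable (Proposition \ref{prop:amenability}); it is not merely a device for the second half of the argument.

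The second half of your sketch is also not yet a proof. The paper does not ``deform the Julg--Valette operator into a rank-one projection'' nor does it use any amenability-provided approximate invariance: it keeps $F$ fixed and deforms the two \emph{representations}, replacing the edge unitaries $u_{e}^{K}$ and $u_{e}^{L}$ by explicit one-parameter families $u_{e}^{K}\exp(ith_{e})$ and $u_{e}^{L}\exp(itk_{e})$ commuting with the edge subalgebras, and then invokes the universal property of $C_{\text{max}}(\G)$ (Corollary \ref{cor:universal}) to assemble these into representations $\pi_{t}$, $\widetilde{\rho}_{t}$ for all $t$; the norm continuity is automatic and the degeneracy at $t=1$ is checked by the explicit computation of Lemma \ref{lem:degenerate}. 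Without some such concrete mechanism, the assertion that a coherent operator homotopy exists ``uniformly across all vertices and edges'' is exactly the point that needs proving, so as written the argument has a gap both in the construction of $\gamma$ and in the identification $\lambda_{\G}^{*}(\gamma)=[\varepsilon]$.
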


\noindent The proof will be done in several steps. The strategy consists in using the natural representations of the reduced fundamental C*-algebra on a quantum Bass-Serre tree, i.e. an analogue of the $\ell^{2}$-spaces of vertices and edges of the Bass-Serre tree associated to a classical graph of groups. In that way, we get two representations of the reduced C*-algebra. We then build a KK-element and prove that it yields the K-amenability of the fundamental quantum group.

\vspace{0.2cm}

\noindent From now on, we fix an \emph{oriented} graph of compact quantum groups $(\Gr, (\G_{q})_{q}, (\G_{e})_{e})$ such that all the compact quantum groups $\G_{q}$ are co-amenable. We set $A_q=C_{\text{max}}(\G_q)=C_{\text{red}}(\G_q)$, $B_e=C_{\text{max}}(\G_e)=C_{\text{red}}(\G_e)$ and we use the notations of the preceding sections. For $q\in\V(\Gr)$ and $e\in\E(\Gr)$, denote by $\varepsilon_q\,:\,A_q\rightarrow\C$ and $\varepsilon_e\,:\,B_e\rightarrow\C$ the counit of $\G_q$ and $\G_e$ respectively. Since the maps $s_e$, $r_e$ are faithful and intertwine the coproducts we have, by the characterization of the counit given in Section \ref{section:CQG}, $\varepsilon_{s(e)}\circ s_e=\varepsilon_e=\varepsilon_{r(e)}\circ r_e$ for every $e\in\E(\Gr)$.

\vspace{0.2cm}

\noindent Let $\G = (P,\Delta)$ be the reduced fundamental quantum group of the graph of quantum groups, where $P=P(p_0)$ is the reduced fundamental C*-algebra at a fixed vertex $p_{0}\in\V(\Gr)$. Since the Haar states $\varphi_{q}$, $q\in\V(\Gr)$, and $\varphi_{e}$, $e\in\E(\Gr)$ form a graph of faithful states, we can identify canonically $P$ with any of the C*-algebras $P_{p}(p_{0})$ for $p\in\V(\Gr)$. We will consequently identify $P$ with its images in all the spaces $\LL_{A_{p}}(\HH_{p_{0},p})$ for $p\in\V(\Gr)$ and omit the superscripts $p$ in the unitaries $u_{e}^{p}$, i.e. we write $a_{0} u_{e_{1}} \dots u_{e_{n}} a_{n}$ for a reduced operator in $P$.

%%%%%%%%%%%%%%%%%%%%%%%%%%
\subsection{The quantum Bass-Serre tree}
%%%%%%%%%%%%%%%%%%%%%%%%%%

Let us fix a vertex $p\in \V(\Gr)$. For every $q\in\V(\Gr)$, let $(L_{p, q}, \pi_{p, q}, \eta_{p, q})$ be the GNS construction of $(\HH_{p, q}, \varepsilon_{q})$. Set $\xi_{p, q}^{L} = \eta_{p, q} (\Omega_{q}(p))$. The "$\ell^{2}$-space of vertices" relative to $p$ is the Hilbert space
\begin{equation*}
L_{p} = \bigoplus_{q\in\V(\Gr)}L_{p, q}
\end{equation*}
on which $P( p )$ ($\simeq P_q( p )$) acts by $\pi_{p} = \oplus_{q} \pi_{p, q}$. For $p=p_0$ we write $L=L_{p_0}$, $\pi=\pi_{p_0}$ and $\xi^L_{p_0}=\xi^L_{p_0,p_0}$.

\vspace{0.2cm}

\noindent For every $p \in \V(\Gr)$ and every $f \in \E(\Gr)$, let $(K_{p, f}, \rho_{p, f}, \eta_{p, f})$ be the GNS construction of $(\HH_{p, s(f)}, \varepsilon_{s(f)}\circ\EE_{f}^{s})$ and set $\xi_{p, f}^{K} = \eta_{p, f}(\Omega_{s(f)}(q))$. Let $\E^{+}(\Gr)$ be the set of positive vertices corresponding to the orientation. Then, the "$\ell^{2}$-space of \emph{positive} edges" relative to $p$ is the Hilbert space
\begin{equation*}
K_{p} = \bigoplus_{f\in\E^{+}(\Gr)} K_{p, f}
\end{equation*}
on which $P( p )$ ($\simeq P_{s(f)}( p )$) acts by $\rho_{p} = \oplus_{f\geqslant 0} \rho_{p, f}$. For $p=p_0$ we write $K=K_{p_0}$ and $\rho=\rho_{p_0}$.

\noindent Let us give some relations between the norms of these Hilbert spaces.

\begin{lem}\label{lem:jvisometry}
Let $w = (e_{1}, \dots, e_{n})$ be a path in $\Gr$  from $p$ to $q$ and let $a= \h{a}_{0}\underset{e_{1}}{\otimes} \dots \underset{e_{n}}{\otimes} a_{n} \in \HH_{w}$. Then,
\begin{eqnarray*}
\|\eta_{p, q}(a)\|^{2}_{L_{p, q}} & = & \vert\varepsilon_{q}(a_{n})\vert^{2}\|\eta_{p, e_{n}}(\h{a}_{0}\underset{e_{1}}{\otimes} \dots \underset{e_{n-1}}{\otimes} a_{n-1})\|^{2}_{K_{p, e_{n}}} \\
& = & \vert\varepsilon_{q}(a_{n})\vert^{2}\|\eta_{p, \rev{e}_{n}}(\h{a}_{0}\underset{e_{1}}{\otimes} \dots \underset{e_{n-1}}{\otimes} \widehat{a}_{n-1}\underset{e_{n}}{\otimes} 1)\|^{2}_{K_{p, \rev{e}_{n}}}.
\end{eqnarray*}
\end{lem}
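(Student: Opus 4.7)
The plan is to unwind everything through Lemma \ref{lem:innerproduct} and the counit compatibility relations $\varepsilon_{s(e)}\circ s_{e} = \varepsilon_{e} = \varepsilon_{r(e)}\circ r_{e}$ recalled at the start of Section \ref{sec:kamenability}. First I would apply Lemma \ref{lem:innerproduct} to express $\langle a, a\rangle_{\HH_{w}}$ as the terminal element $x_{n}$ of the inductive sequence $x_{0} = a_{0}^{*}a_{0}$, $x_{k} = a_{k}^{*}(r_{e_{k}}\circ s_{e_{k}}^{-1}\circ \EE_{e_{k}}^{s}(x_{k-1}))a_{k}$, so that by definition of the GNS construction $\|\eta_{p,q}(a)\|^{2}_{L_{p,q}} = \varepsilon_{q}(x_{n})$.

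For the first equality, the last step of the induction has the form $x_{n} = a_{n}^{*}y_{n}a_{n}$ with $y_{n} = r_{e_{n}}\circ s_{e_{n}}^{-1}\circ \EE_{e_{n}}^{s}(x_{n-1}) \in A_{q}$. Since $\varepsilon_{q}$ is a $*$-homomorphism, this factors out $|\varepsilon_{q}(a_{n})|^{2}$, and the counit compatibility collapses the remaining $\varepsilon_{q}(y_{n})$ to $\varepsilon_{s(e_{n})}\circ \EE_{e_{n}}^{s}(x_{n-1})$. On the other hand, applying Lemma \ref{lem:innerproduct} to the truncated path $(e_{1},\dots,e_{n-1})$ shows that the $B_{e_{n}}$-valued self-inner-product of $\h{a}_{0}\underset{e_{1}}{\ot}\dots\underset{e_{n-1}}{\ot} a_{n-1}$ in $\HH_{(e_{1},\dots,e_{n-1})}$ is precisely $x_{n-1}$, and the GNS norm on $K_{p,e_{n}}$ is the one induced by $\varepsilon_{s(e_{n})}\circ \EE_{e_{n}}^{s}$; the two quantities therefore match.

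For the second equality, I would run Lemma \ref{lem:innerproduct} on the extended path $(e_{1},\dots,e_{n})$ applied to $\h{a}_{0}\underset{e_{1}}{\ot}\dots\underset{e_{n-1}}{\ot}\h{a}_{n-1}\underset{e_{n}}{\ot} 1$: the first $n$ inductive values are the same $x_{0},\dots,x_{n-1}$, and the terminal value is $x_{n}' = r_{e_{n}}\circ s_{e_{n}}^{-1}\circ \EE_{e_{n}}^{s}(x_{n-1}) \in B_{e_{n}}^{r}$. The GNS state defining the $K_{p,\rev{e}_{n}}$ norm is $\varepsilon_{r(e_{n})}\circ \EE_{e_{n}}^{r}$, and since $\EE_{e_{n}}^{r}$ fixes $B_{e_{n}}^{r}$ pointwise, this evaluates to $\varepsilon_{q}\circ r_{e_{n}}(s_{e_{n}}^{-1}\circ \EE_{e_{n}}^{s}(x_{n-1})) = \varepsilon_{s(e_{n})}\circ \EE_{e_{n}}^{s}(x_{n-1})$, exactly the same scalar as in the first equality, so the two norms in $K_{p,e_n}$ and $K_{p,\rev{e}_n}$ agree.

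No serious obstacle presents itself: the lemma is essentially a definitional check riding on Lemma \ref{lem:innerproduct}. The only pitfall is bookkeeping---keeping straight that when the edge $f = \rev{e}_{n}$ is used to build $K_{p,f}$, the defining state $\varepsilon_{s(f)}\circ \EE_{f}^{s}$ becomes $\varepsilon_{r(e_{n})}\circ \EE_{e_{n}}^{r}$, which is precisely what produces the asymmetric-looking second formula and forces the insertion of $\underset{e_{n}}{\ot} 1$ on that side.
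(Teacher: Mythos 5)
Your proposal is correct and follows essentially the same route as the paper's proof: compute the three self-inner-products via Lemma \ref{lem:innerproduct}, apply the defining GNS states of $L_{p,q}$, $K_{p,e_n}$ and $K_{p,\rev{e}_n}$, and reconcile the scalars through $\varepsilon_{s(e_n)}\circ s_{e_n}=\varepsilon_{r(e_n)}\circ r_{e_n}$ and the fact that $\EE_{e_n}^{r}$ fixes $B_{e_n}^{r}$. The only slip is calling the self-inner-product of the truncated tensor ``$B_{e_n}$-valued'' when it is $A_{s(e_n)}$-valued, but this does not affect the argument since you then correctly apply the state $\varepsilon_{s(e_n)}\circ\EE_{e_n}^{s}$ to it.
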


\begin{proof}
Let $x_0=a_0^*a_0$ and, for $1\leqslant k\leqslant n$, $x_k=a_k^*(r_{e_k}\circ s_{e_k}^{-1}\circ \EE_{e_k}^s(x_{k-1}))a_k$. Set $a'=\h{a}_{0}\underset{e_{1}}{\otimes} \dots \underset{e_{n-1}}{\otimes} \widehat{a}_{n-1}\underset{e_{n}}{\otimes} 1\in\mathcal{H}_{p,s(\overline{e}_n)}$ and $a''=\h{a}_{0}\underset{e_{1}}{\otimes} \dots \underset{e_{n-1}}{\otimes} a_{n-1}\in\mathcal{H}_{p,s(e_n)}$. By Lemma \ref{lem:innerproduct}, we have
\begin{equation*}
\langle a,a\rangle_{\mathcal{H}_{p,q}}=x_n,\quad \langle a',a'\rangle_{\mathcal{H}_{p,s(e_n)}}=x'_{n}\quad\text{and}\quad\langle a'',a''\rangle_{\mathcal{H}_{p,s(\overline{e}_n)}}=x_{n-1},
\end{equation*}
where $x'_n=r_{e_n}\circ s_{e_n}^{-1}\circ\EE_{e_n}^s(x_{n-1})$. Hence, we get
\begin{equation*}
\|\eta_{p, q}(a)\|^{2}_{L_{p, q}}=\varepsilon_q(x_n)=\varepsilon_q(a_n^*(r_{e_n}\circ s_{e_n}^{-1}\circ \EE_{e_n}^s(x_{n-1}))a_n)
=\vert\varepsilon_{q}(a_{n})\vert^{2}\varepsilon_{r(e_n)}\circ r_{e_n}\circ s_{e_n}^{-1}\circ \EE_{e_n}^s(x_{n-1})
\end{equation*}
and
\begin{equation*}
\|\eta_{p, \overline{e}_{n}}(a')\|^{2}_{K_{p, \overline{e}_{n}}}=\varepsilon_{s(\overline{e}_n)}\circ\EE_{\overline{e}_n}^s(x_n')=\varepsilon_{r(e_n)}\circ r_{e_n}\circ s_{e_n}^{-1}\circ \EE_{e_n}^s(x_{n-1}).
\end{equation*}
This proves the second equality. Moreover, using the relation $\varepsilon_{r(e_n)}\circ r_{e_n}=\varepsilon_{s(e_n)}\circ s_{e_n}$, we have
\begin{equation*}
\varepsilon_{r(e_n)}\circ r_{e_n}\circ s_{e_n}^{-1}\circ \EE_{e_n}^s(x_{n-1})=\varepsilon_{s(e_n)}\circ\EE_{e_n}^s(x_{n-1})=\|\eta_{p, e_{n}}(a'')\|^{2}_{K_{p, e_{n}}},
\end{equation*}
proving the first equality.
\end{proof}

\begin{rem}\label{rem:lastspacetrivial}
In $L_{p, q}$ we have $\eta_{p, q}(\widehat{a}_{0}\otimes \dots \otimes a_{n}) = \varepsilon_{q}(a_{n})\eta_{p, q}(\widehat{a}_{0}\otimes \dots \otimes 1)$. Indeed, the formula is obvious for $n=0$ and, for $n\geqslant 1$, it follows by Lemma \ref{lem:jvisometry} that
$$||\eta_{p, q}(\widehat{a}_{0}\otimes \dots \otimes a_{n})- \varepsilon_{q}(a_{n})\eta_{p, q}(\widehat{a}_{0}\otimes \dots \otimes 1) ||^2
= ||\eta_{p, q}(\widehat{a}_{0}\otimes \dots \otimes (a_{n}-\varepsilon_q(a_n)))||^2=0.$$
\end{rem}

\begin{rem}\label{rem:lastspaceK}
An easy computation shows that in $K_{p,f}$ we have, for all $n\geq 0$,
$$\eta_{p,f}(\widehat{a}_{0}\otimes \dots \otimes a_{n}b) =\varepsilon_{s(f)}(b)\eta_{q,f}(\widehat{a}_{0}\otimes \dots \otimes a_{n})\quad\text{for all}\,\,b\in B_f^s.$$

\noindent Actually, the following holds for the spaces $K_{p, f}$:
\begin{equation*}
\eta_{p, f}(a_{0}\otimes \dots \otimes a_{n}) = \varepsilon_{s(f)}\circ\EE_{f}^{s}(a_{n})\eta_{p, f}(a_{0}\otimes \dots \otimes 1) + \eta_{p, f}(a_{0}\otimes \dots \otimes \PP_{f}^{s}(a_{n})).
\end{equation*}
\end{rem}

%%%%%%%%%%%%%%%%%%%%%%%%%%
\subsection{The Julg-Valette operator}
%%%%%%%%%%%%%%%%%%%%%%%%%%

We now define an operator $\F: L \rightarrow K$ which will give our KK-element. In the classical situation, the operator $\F$ associates to each vertex $p$ the last edge in the unique geodesic path from $p_{0}$ to $p$. Here, the construction is similar but we use the orientation we fixed on the graph $\Gr$ instead of the ascending orientation. Set $\F(\xi_{p_{0}}^{L}) = 0$ and for every path $w = (e_{1}, \dots, e_{n})$ from $p_{0}$ to $q$ and every $x = \h{x}_{0}\underset{e_{1}}{\otimes} \dots \underset{e_{n}}{\otimes} x_{n} \in \HH_{w}$, set
\begin{equation*}
\F(\eta_{p_{0}, q}(x)) = \left\{\begin{array}{ccc}
\varepsilon_{r(e_{n})}(x_{n})\eta_{p_{0}, e_{n}}(\h{x}_{0}\underset{e_{1}}{\otimes} \dots \underset{e_{n-1}}{\otimes} x_{n-1}) \in K_{p_{0}, e_{n}} & \text{if} & e_{n}\in \E^{+}(\Gr) \\
\varepsilon_{r(e_{n})}(x_{n})\eta_{p_{0}, \rev{e}_{n}}(\h{x}_{0}\underset{e_{1}}{\otimes} \dots \underset{e_{n-1}}{\otimes} \h{x}_{n-1} \underset{e_{n}}{\otimes} 1) \in K_{p_{0}, \rev{e}_{n}} & \text{if} & e_{n}\notin \E^{+}(\Gr) \\
\end{array}\right.
\end{equation*}

\noindent The operator $\F$ is called the \emph{Julg-Valette operator} associated to the graph of quantum groups. For every vertex $q\in \V(\Gr)$, let $\HH^{+}_{p_{0}, q}$ (resp. $\HH^{-}_{p_{0}, q}$) be the direct sum of all path Hilbert modules $\HH_{w}$ where $w$ is a non-empty path from $p_{0}$ to $q$ such that the last edge of $w$ is positive (resp. negative) and let $L_{p_{0}, q}^{+}$ (resp. $L_{p_{0}, q}^{-}$) be the closure of $\eta_{p_{0}, q}(\HH^{+}_{p_{0}, q})$ (resp.  $\eta_{p_{0}, q}(\HH^{-}_{p_{0}, q})$). We have a decomposition
\begin{equation*}
L_{p_{0}, q} = \left\{\begin{array}{lcl}
L_{p_{0}, q}^{+}\oplus L_{p_{0}, q}^{-}\oplus\C.\xi_{p_{0}}^{L} & \text{if} & p_{0} = q,\\
L_{p_{0}, q}^{+}\oplus L_{p_{0}, q}^{-} & \text{if} & p_{0}\neq q.
\end{array}\right.
\end{equation*}
Summing up over all $q$'s gives a similar decomposition $L = L^{+}\oplus L^{-}\oplus\C.\xi_{p_{0}}^{L}$ where,
\begin{equation*}
L^{+} = \bigoplus_{q\in\V(\Gr)}L_{p_{0}, q}^{+}\text{ and } L^{-} = \bigoplus_{q\in\V(\Gr)}L_{p_{0}, q}^{-}.
\end{equation*}
It is easily seen from Lemma \ref{lem:jvisometry} that $\F$ is an isometry on $L^{+}$ and $L^{-}$ and hence extends to a bounded operator so that, $\F(L^{+})$ and $\F(L^{-})$ being orthogonal, $\F$ is isometric on the orthogonal complement of $\xi_{p_{0}}^{L}$. Since $\F$ is also surjective on that space, we have $\F\F^{*} = \Id_{K}$. This also implies that $\F^{*}\F = \Id_{L} - p_{\xi_{p_{0}}^{L}}$, where $p_{\xi_{p_{0}}^{L}}$ is the orthogonal projection onto $\C.\xi_{p_{0}^{L}}$. In short, we have proven that $\F$ is unitary modulo compact operators. In order to get a KK-element, we now have to prove that $\F$ commutes with the representations $\pi$ and $\rho$ of $P$ up to the compact operators.

\begin{lem}
With the notations above, the following hold:
\begin{enumerate}
\item For every $a\in A_{p_{0}}$ we have $\F\circ\pi(a) = \rho(a)\circ\F$.
\item For every reduced operator $a = a_{n} u_{e_{n}} \dots u_{e_{1}}a_{0}\in P$, $\I(\F\circ\pi(a) - \rho(a)\circ \F) = X_{a}$, where
\begin{eqnarray*}
X_{a} & = & \underset{1\leqslant k\leqslant n}{\Span}\left(\left\{\eta_{p_{0}, e_{k}}(\h{a}_{n}\underset{e_{n}}{\otimes} \dots \underset{e_{k+1}}{\otimes} a_{k}): e_{k}\geqslant 0\right\}\right) \\
& \bigoplus & \underset{1\leqslant k\leqslant n}{\Span}\left(\left\{ \eta_{p_{0},\rev{e}_{k}}(\h{a}_{n}\underset{e_{n}}{\otimes} \dots \underset{e_{k+1}}{\otimes}\h{a}_{k}\underset{e_{k}}{\otimes} 1): e_{k}\leqslant 0\right\}\right).
\end{eqnarray*}
\item For every $a\in P$ the operator $\F\circ\pi(a)-\rho(a)\circ\F$ is compact.
\end{enumerate}
\end{lem}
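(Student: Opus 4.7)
The plan is to prove the three statements in sequence, reducing (3) to the density argument based on (1) and (2), and to handle (2) by a direct calculation using Lemma \ref{lem:product} applied to the explicit action of a reduced operator on a basis vector of $L$.

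For (1), I would work on the dense subspace of $L$ spanned by vectors of the form $\eta_{p_{0},q}(\h{x}_{0}\underset{e_{1}}{\otimes}\dots\underset{e_{n}}{\otimes} x_{n})$ for a path $w=(e_{1},\dots,e_{n})$ from $p_{0}$ to $q$ (plus $\xi^{L}_{p_{0}}$ when $q=p_{0}$). An element $a\in A_{p_{0}}$ acts on $\HH_{p_{0},q}$ by left multiplication on the first factor, so $\pi(a)\eta_{p_{0},q}(\h{x}_{0}\otimes\dots\otimes x_{n})=\eta_{p_{0},q}(\widehat{ax_{0}}\otimes\dots\otimes x_{n})$; the same is true for $\rho(a)$ on $K$. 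Applying $\F$ on both sides, only the first factor is affected, and the formula defining $\F$ on the two parts $L^{+}$ and $L^{-}$ gives the intertwining $\F\pi(a)=\rho(a)\F$ immediately (and $\F\pi(a)\xi_{p_{0}}^{L}=\varepsilon_{p_{0}}(a)\F\xi^{L}_{p_{0}}=0=\rho(a)\F\xi^{L}_{p_{0}}$).

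For (2), fix a reduced operator $a=a_{n}u_{e_{n}}\dots u_{e_{1}}a_{0}$ and apply it to a basis vector $b=\eta_{p_{0},p_{1}}(\widehat{b}_{0}\underset{f_{1}}{\otimes}\dots\underset{f_{m}}{\otimes}b_{m})\in L_{p_{0},p_{1}}$. Using Lemma \ref{lem:product} (with appropriate swap $s\leftrightarrow r$ to match the left action), $\pi(a)b$ decomposes as a sum indexed by $k=1,\dots,n_{0}$ of terms of the form $\widehat{a}_{n}\underset{e_{n}}{\otimes}\dots\underset{e_{k}}{\otimes}\widehat{y}_{k}\underset{f_{k}}{\otimes}\dots\underset{f_{m}}{\otimes}b_{m}$ plus a "tail" term. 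Applying $\F$ kills most of these intermediate contributions by cancellation against $\rho(a)\F b$: indeed, $\rho(a)\F b$ is computed by first replacing $b_{m}$ by $\varepsilon(b_{m})$ (Remarks \ref{rem:lastspacetrivial} and \ref{rem:lastspaceK}) and then letting $a$ act, which by the very same Lemma \ref{lem:product} produces exactly the analogous sum with $b_{m}$ replaced by $\varepsilon(b_{m})1$. The terms whose last edge is one of the $f_{j}$ match term by term, so the only contributions that survive in $\F\pi(a)b-\rho(a)\F b$ are those coming from the edges $e_{k}$ of $a$ where $\pi(a)b$ has a new final segment $\widehat{y}_{k}$ or ends in $\widehat{x}_{n_{0}}$; after applying $\F$ and invoking the formulas of Remarks \ref{rem:lastspacetrivial} and \ref{rem:lastspaceK} (noting that $\varepsilon_{s(e_{k})}\circ\EE^{r}_{e_{k}}=\varepsilon_{r(e_{k})}$ and that $\PP^{r}_{e_{k}}(x_{k-1})$ contributes through its counit), these are precisely the vectors spanning $X_{a}$. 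This bookkeeping is the main obstacle: one has to treat the five cases of Lemma \ref{lem:product} separately, and keep track of which "last edge" applies in $\F$ according to the orientation $\E^{+}(\Gr)$ versus $\E^{-}(\Gr)$ for $e_{k}$.

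For (3), observe that the space $X_{a}$ described in (2) is finite-dimensional, hence $\F\pi(a)-\rho(a)\F$ is a finite-rank operator for every reduced $a$. By (1) the same commutator vanishes for $a\in A_{p_{0}}$. By the linearity of $a\mapsto \F\pi(a)-\rho(a)\F$, its restriction to the dense $*$-subalgebra $\R(p_{0})+A_{p_{0}}$ of $P$ takes values in $\K(L,K)$. Since the commutator map $P\to \mathcal{B}(L,K)$ is norm-continuous and $\K(L,K)$ is closed in $\mathcal{B}(L,K)$, it follows that $\F\pi(a)-\rho(a)\F\in\K(L,K)$ for every $a\in P$, completing the proof.
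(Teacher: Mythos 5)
Your proposal follows essentially the same route as the paper: part (1) by noting that $a\in A_{p_{0}}$ and $\F$ act on disjoint tensor legs, part (2) by applying Lemma \ref{lem:product} to $a.b$ and to the truncated or relabelled vector underlying $\rho(a)\circ\F(b)$, cancelling the matching terms and identifying the surviving ``tail'' terms as generators of $X_{a}$ via Remarks \ref{rem:lastspacetrivial} and \ref{rem:lastspaceK}, and part (3) by finite-rankness on the dense span of $A_{p_{0}}$ and the reduced operators together with norm-continuity of the commutator map. The bookkeeping you defer (the orientation of the last edge, the cases of Lemma \ref{lem:product}, and the shift from $n_{0}$ to $n_{0}'$ when passing from $b$ to its truncation $b'$) is exactly the content of the paper's proof, so nothing essential is missing.
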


\begin{proof}
The proof of $(1)$ is obvious. Let us prove $(2)$. Let $a = a_{n} u_{e_{n}} \dots u_{e_{1}}a_{0}$, where $n\geqslant 1$ and $(e_{n}, \dots, e_{1})$ is a path in $\Gr$ from $p_{0}$ to $p_{0}$. Observe that $(\F\circ\pi(a)-\rho(a)\circ\F)\xi^{L}_{p_{0}}$ is equal to
\begin{equation*}
\F\circ\pi(a)\xi^{L}_{p_{0}} = \varepsilon_{p_{0}}(a_{0})\left\{
\begin{array}{lcl}
\eta_{p_{0}, e_{1}}(\h{a}_{n}\underset{e_{n}}{\otimes} \dots \underset{e_{2}}{\otimes} a_{1})\in X_{a} & \text{if} & e_{1}\geqslant 0,\\
\eta_{p_{0}, \rev{e}_{1}}(\h{a}_{n}\underset{e_{1}}{\otimes} \dots \underset{e_{2}}{\otimes} \h{a}_{1}\underset{e_{1}}{\otimes} 1)\in X_{a} & \text{if} & e_{1}\leqslant 0.\end{array}\right.
\end{equation*}
Take now $\xi\in L$, of the form $\eta_{p_{0}, q}(b)$ for some $b = \h{b}_{0}\underset{f_{1}}{\otimes} \dots \underset{f_{m}}{\otimes} 1 \in \HH_{p_{0}, q}$, where $m\geqslant 1$ and $(f_{1}, \dots, f_{m})$ is a path from $p_{0}$ to $q$. Set $b' = b_{0}$ if $m = 1$ and, $b' = \h{b}_{0}\underset{f_{1}}{\otimes} \dots \underset{f_{m-1}}{\otimes} b_{m-1}$ if $m > 1$. We have
\begin{equation}\label{eqrank1}
(\F\circ\pi(a)-\rho(a)\circ\F)\xi = \left\{\begin{array}{lcl}
\F\eta_{p_{0}, q}(a.b) - \eta_{p_{0}, f_{m}}(a.b') & \text{if} & f_{m} \geqslant 0, \\
\F\eta_{p_{0}, q}(a.b) - \eta_{p_{0}, \rev{f}_{m}}(a.b) & \text{if} & f_{m}\leqslant 0
\end{array}\right.
\end{equation}
Using the notations of Lemma \ref{lem:product}, we have an integer $n_{0}$ associated to the pair $(a, b)$. Denote by $n_{0}'$ the integer associated to the pair $(a, b')$ (assume $m\geqslant 2$, otherwise it is easy to conclude). Observe that
\begin{equation*}
n_{0}' = 
\left\{\begin{array}{lcl}
n_{0} & \text{if} & n_{0} < m, \\
m-1 & \text{if} & n_{0} = m.
\end{array}\right.
\end{equation*}

\vspace{0.2cm}

\noindent\textbf{Case 1: $\xi\in L^{-}$.} This means that $f_{m}\leqslant 0$.

\vspace{0.2cm}

\noindent If $n < m$, Lemma \ref{lem:product} implies that $a.b\in\HH_{p_{0}, q}^{-}$ and, by definition of $\F$, $(\F\circ\pi(a) - \rho(a)\circ\F)\xi = 0$.

\vspace{0.2cm}

\noindent Assume $n\geqslant m$. If $n_{0} < m$, Lemma \ref{lem:product} again implies that $a.b\in\HH_{p_{0}, q}^{-}$ and we get, as before, $(\F\circ\pi(a) - \rho(a)\circ\F)\xi = 0$. If $n_{0} = m$, $e_{m} = \rev{f}_{m} \geqslant 0$ and, with the notations of Lemma \ref{lem:product}, we have
\begin{equation*}
a.b = \sum_{k = 1}^{m}\h{a}_{n}\underset{e_{n}}{\otimes} \dots \underset{e_{k}}{\otimes}\h{y}_{k}\underset{f_{k}}{\otimes} \dots \underset{f_{m}}{\otimes} 1 + \h{a}_{n} \underset{e_{n}}{\otimes} \dots \underset{e_{m+1}}{\otimes} x_{m},
\end{equation*}
where $x_{m} = a_{m}z$ and $z\in B_{e_{m}}^{s} = B_{f_{m}}^{r}$. On the one hand we have, since $f_{m}\leqslant 0$,
\begin{equation*}
\F\eta_{p_{0}, q}(a.b) = \sum_{k = 1}^{m}\eta_{p_{0}, \rev{f}_{m}}(\h{a}_{n}\underset{e_{n}}{\otimes} \dots \underset{e_{k}}{\otimes}\h{y}_{k}\underset{f_{k}}{\otimes} \dots \underset{f_{m}}{\otimes} 1) + \varepsilon_{q}(x_{m})\mathcal{F}\eta_{p_{0}, q}(\h{a}_{n}\underset{e_{n}}{\otimes} \dots \underset{e_{m+1}}{\otimes} 1)
\end{equation*}
and $\F\eta_{p_{0}, q}(\h{a}_{n}\underset{e_{n}}{\otimes} \dots \underset{e_{m+1}}{\otimes} 1) = \left\{\begin{array}{lcl}
\eta_{p_{0}, e_{m+1}}(\h{a}_{n}\underset{e_{n}}{\otimes} \dots \underset{e_{m+2}}{\otimes}a_{m+1})\in X_{a} & \text{if} & e_{m+1}\geqslant 0,\\
\eta_{p_{0}, \rev{e}_{m+1}}(\h{a}_{n}\underset{e_{n}}{\otimes} \dots \underset{e_{m+1}}{\otimes} 1) \in X_{a} & \text{if} & e_{m+1}\leqslant 0.
\end{array}\right.$

\vspace{0.2cm}

\noindent On the other hand, since $z\in B_{f_{m}}^{r}$ we get
\begin{equation*}
\eta_{p_{0}, \rev{f}_m}(a.b) = \sum_{k = 1}^{m}\eta_{p_{0},\rev{f}_{m}}(\h{a}_{n}\underset{e_{n}}{\otimes} \dots \underset{e_{k}}{\otimes}\h{y}_{k}\underset{f_{k}}{\otimes} \dots \underset{f_{m}}{\otimes} 1) + \varepsilon_{q}(z)\eta_{p_{0},\rev{f}_{m}}(\h{a}_{n}\underset{e_{n}}{\otimes} \dots \underset{e_{m+1}}{\otimes} a_{m})
\end{equation*}
and $\eta_{p_{0}, \rev{f}_{m}}(\h{a}_{n}\underset{e_{n}}{\otimes} \dots \underset{e_{m+1}}{\otimes} a_{m}) = \eta_{p_{0}, e_{m}}(\h{a}_{n}\underset{e_{n}}{\otimes} \dots \underset{e_{m+1}}{\otimes} a_{m}) \in X_{a}$. Hence, by Equation (\ref{eqrank1}), we get
\begin{equation*}
(\F\circ\pi(a) - \rho(a)\circ\F) \xi = \varepsilon_{q}(x_{m})\F\eta_{p_{0}, q}(\h{a}_{n}\underset{e_{n}}{\otimes} \dots \underset{e_{m+1}}{\otimes} 1) - \varepsilon_{q}(z)\eta_{p_{0}, \rev{f}_{m}}(\h{a}_{n}\underset{e_{n}}{\otimes} \dots \underset{e_{m+1}}{\otimes} a_{m}) \in X_{a}.
\end{equation*}

\vspace{0.2cm}

\noindent\textbf{Case 2: $\xi\in L^{+}$.} This means that $f_{m}\geqslant 0$.

\vspace{0.2cm}

\noindent If $n < m$, we have $n_{0}' = n_{0}\leqslant n < m$. Lemma \ref{lem:product} applied to the pairs $(a, b)$ and $(a, b')$ and the definition of $\F$ imply that $(\F\circ\pi(a) - \rho(a)\circ\F) \xi = 0$.

\vspace{0.2cm}

\noindent Assume $n\geqslant m$. If $n_{0} < m$ then $n_{0}' = n_{0}$ and Lemma \ref{lem:product} again implies $(\F\circ\pi(a) - \rho(a)\F) \xi = 0$. If $n_{0} = m$ then $n_{0}' = m-1$, with the notations of Lemma \ref{lem:product} we have
\begin{equation*}
a.b= \sum_{k = 1}^{m}\h{a}_{n}\underset{e_{n}}{\otimes} \dots \underset{e_{k}}{\otimes}\h{y}_{k}\underset{f_{k}}{\otimes} \dots \underset{f_{m}}{\otimes} 1 + \h{a}_{n} \underset{e_{n}}{\otimes} \dots \underset{e_{m+1}}{\otimes} x_{m}
\end{equation*}
and
\begin{equation*}
a.b' = \sum_{k = 1}^{m-1}\h{a}_{n}\underset{e_{n}}{\otimes} \dots \underset{e_{k}}{\otimes}\h{y}_{k}\underset{f_{k}}{\otimes} \dots \underset{f_{m-1}}{\otimes}b_{m-1} + \h{a}_{n} \underset{e_{n}}{\otimes} \dots \underset{e_{m}}{\otimes} x_{m-1}.
\end{equation*}
where $x_{m-1}\in A_{r(e_{m})}$ and $y_{m} = x_{m-1}-\EE_{e_{m}}^r{(}x_{m-1})$. On the one hand we have, since $f_{m}\geqslant 0$,
\begin{eqnarray*}
\F\eta_{p_{0}, q}(a.b) & = & \sum_{k = 1}^{m-1}\eta_{p_{0}, f_{m}}(\h{a}_{n}\underset{e_{n}}{\otimes} \dots \underset{e_{k}}{\otimes}\h{y}_{k}\underset{f_{k}}{\otimes} \dots \underset{f_{m-1}}{\otimes}b_{m-1})+\eta_{p_{0}, f_{m}}(\h{a}_{n}\underset{e_{n}}{\otimes} \dots \underset{e_{m}}{\otimes}y_{m})\\
& + & \varepsilon_{q}(x_{m})\F\eta_{p_{0}, q}(\h{a}_{n}\underset{e_{n}}{\otimes} \dots \underset{e_{m+1}}{\otimes} 1).
\end{eqnarray*}
As before, we see easily that $\F\eta_{p_{0}, q}(\h{a}_{n}\underset{e_{n}}{\otimes} \dots \underset{e_{m+1}}{\otimes} 1)\in X_{a}$. On the other hand, 
\begin{equation*}
\eta_{p_{0}, f_{m}}(a.b') = \sum_{k = 1}^{m-1}\eta_{p_{0}, f_{m}}(\h{a}_{n}\underset{e_{n}}{\otimes} \dots \underset{e_{k}}{\otimes}\h{y}_{k}\underset{f_{k}}{\otimes} \dots \underset{f_{m-1}}{\otimes}b_{m-1}) + \eta_{p_{0}, f_{m}}(\h{a}_{n}\underset{e_{n}}{\otimes} \dots \underset{e_{m}}{\otimes}x_{m-1}).
\end{equation*}
Hence, by Equation (\ref{eqrank1}), we have
\begin{equation*}
(\F\circ\pi(a) - \rho(a)\circ\F) \xi =
\varepsilon_{q}(x_{m})\F\eta_{p_{0}, q}(\h{a}_{n}\underset{e_{n}}{\otimes} \dots \underset{e_{m+1}}{\otimes} 1)
+\eta_{p_{0}, f_{m}}(\h{a}_{n}\underset{e_{n}}{\otimes} \dots \underset{e_{m}}{\otimes}(y_{m}-x_{m-1})).
\end{equation*}

\noindent Since $y_{m} - x_{m-1} = -\EE_{e_{m}}^{r}(x_{m-1}) = -\EE_{f_{m}}^{s}(x_{m-1})$,
\begin{equation*}
\eta_{p_{0}, f_{m}}(\h{a}_{n}\underset{e_{n}}{\otimes} \dots \underset{e_{m}}{\otimes}(y_{m}-x_{m-1})) = -\varepsilon_{s(f_{m})}\circ\EE_{f_{m}}^{s}(x_{m-1})\eta_{p_{0}, \rev{e}_{m}}(\h{a}_{n}\underset{e_{n}}{\otimes} \dots \underset{e_{m}}{\otimes} 1)\in X_{a}.
\end{equation*}
Hence, $(\F\circ\pi(a) - \rho(a)\circ\F)\xi\in X_{a}$. 

\vspace{0.2cm}

\noindent $(3)$ now follows easily from $(1)$ and $(2)$ since $(\F\circ\pi(a) - \rho(a)\circ\F)$ has finite rank for every $a$ in the linear span of $A_{p_{0}}$ and the reduced operators, which is dense in $P$.
\end{proof}

\noindent We thus have constructed a KK-element $\gamma = [(L, K, \F)]\in \KK(C_{\text{red}}(\G), \C)$. Let us describe a triple which is equal to $\lambda_{\G}^{*}(\gamma) - [\varepsilon]$. Set
\begin{equation*}
\widetilde{K} = K\oplus \C.\Omega
\end{equation*}
for some norm-one vector $\Omega$ and endow it with $\widetilde{\rho}$, which is the direct sum of the representation $\rho\circ\lambda_{\G}$ and the trivial representation. Let $F: L\rightarrow \widetilde{K}$ be the operator defined by $F = \F$ on the orthogonal complement of $\C.\xi_{p_{0}}^{L}$ and $F(\xi_{p_{0}}^{L}) = \Omega$, which is unitary. Then, the triple $(L, \widetilde{K}, F)$ (where $L$ is endowed with the representation $\pi\circ\lambda_{\G}$, that we still denote by $\pi$) defines a class $\widetilde{\gamma}\in \KK(C_{\text{max}}(\G), \C)$ which is equal to $\lambda_{\G}^{*}(\gamma) - [\varepsilon]$.

\begin{rem}\label{rem:commutation}
Note that we have $F\circ\pi(a)\circ F^{*} = \widetilde{\rho}(a)$ for every $a\in A_{p_0}$.
\end{rem}

%%%%%%%%%%%%%%%%%%%%%%%%%%
\subsection{The homotopy}
%%%%%%%%%%%%%%%%%%%%%%%%%%

We now want to construct an homotopy in $\KK(C_{\text{max}}(\G), \C)$ from $\widetilde{\gamma}$ to $0$. This will be done using deformations of both representations $\pi$ and $\widetilde{\rho}$, conjugating them by some path of unitaries. We thus first have to define these unitaries. Recall that if $e\in\E(\Gr)$ and $q\in\V(\Gr)$, the unitary
\begin{equation*}
u_{e}^{q}\in\mathcal{L}_{A_{q}}\left(\mathcal{H}_{r(e), q}, \mathcal{H}_{s(e), q}\right)
\end{equation*}
induces in a canonical way an operator in $\B\left(L_{r(e), q}, L_{s(e), q}\right)$ still denoted $u_{e}^{q}$. This procedure respects composition and adjoints, hence $u_{e}^{q}$ is unitary for every $e\in\E(\Gr)$ and $q\in\V(\Gr)$. We also define, for $e,f\in\E(\Gr)$, the unitaries $u_{e}^{f}\in \B\left(K_{r(e), f},K_{s(e), f}\right)$ which are induced by the unitaries
\begin{equation*}
u_{e}^{s(f)}\in\mathcal{L}_{A_{s(f)}}\left(\mathcal{H}_{r(e), s(f)}, \mathcal{H}_{s(e), s(f)}\right).
\end{equation*}

%%%%%%%%%%%%%%%%%%%%%%%%%%
\subsubsection{Deformation of $\widetilde{\rho}$}
%%%%%%%%%%%%%%%%%%%%%%%%%%

For $q\in\V(\Gr)$, set $\widetilde{K}_{q} = K_{q}\oplus \C.\Omega$ endowed with the sum $\widetilde{\rho}_q$ of $\rho_q\circ\lambda_{\G}$ and the trivial representation. For $e\in \E^{+}(\Gr)$, set
\begin{equation*}
u_{e}^{K} = \Id_{\C.\Omega}\oplus\bigoplus_{f\in\E(\Gr)^+}u_{e}^{f}: \widetilde{K}_{r(e)}\rightarrow \widetilde{K}_{s(e)}\quad\text{and}\quad u_{\rev{e}}^{K} = (u_{e}^{K})^{*}.
\end{equation*}
\begin{rem}
For every $e\in \E^{+}(\Gr), b\in B_{e}$ we have $\widetilde{\rho}_{s(e)}(s_{e}(b)) = u^K_{e}\widetilde{\rho}_{r(e)}(r_{e}(b))(u^K_{e})^{*}$. Indeed, since the relation holds on $K_{r(e)}$, it suffices to check it on $\Omega$:
\begin{eqnarray*}
u^K_{e}\widetilde{\rho}_{r(e)}(r_{e}(b))(u^K_{e})^{*}.\Omega &=& u^K_{e}\widetilde{\rho}_{r(e)}(r_{e}(b)).\Omega 
 = \varepsilon_{r(e)}(r_{e}(b))u^K_{e}.\Omega 
 =  \varepsilon_{s(e)}(s_{e}(b))\Omega\\
 &=&  \widetilde{\rho}_{s(e)}(s_{e}(b)).\Omega.
\end{eqnarray*}
\end{rem}
\noindent For $e\in \E^{+}(\Gr)$, we define a unitary $v_e\,:\,\widetilde{K}_{r(e)}\rightarrow \widetilde{K}_{s(e)}$ in the following way:

\vspace{0.2cm}

\noindent\emph{Case 1: e is not a loop.} This means that $s(e) \neq r(e)$. In that case we set $v_{e} = u_{e}^{K}$.

\vspace{0.2cm}

\noindent\emph{Case 2: e is a loop}. Let $p=r(e)=s(e)$. If $f\in\E(\Gr)^+$ and $f\neq e$, $v_{e}$ acts on $K_{p, f}$ by $u_{e}^{f}$ as before. Let $\mathcal{L}_{e}$ be the closure of the linear span of elements $\eta_{p, e}(b)\in K_{p, e}$ for $b = \h{1}\underset{e}{\otimes}b_{1}\underset{e_{2}}{\otimes} \dots \underset{e_{n}}{\otimes}b_{n}$ with $n\geqslant 1$, i.e. the span of "words starting with $e$" and denote by $\mathcal{R}_{e}$ the orthogonal complement of $\mathcal{L}_{e}$ in $K_{p, e}$. In the same way, we define spaces $\mathcal{L}_{\rev{e}}$ and $\mathcal{R}_{\rev{e}}$. Obviously, $u_{e}^{r(e)}$ sends $\mathcal{R}_{\rev{e}}$ (resp. $\mathcal{L}_{\overline{e}}$) bijectively onto $\mathcal{L}_{e}$ (resp. $\mathcal{R}_e$). Moreover,
\begin{equation*}
u_{e}^{r(e)}\left(\mathcal{L}_{\rev{e}}\ominus \C.\eta_{p, e}(\h{1}\underset{\rev{e}}{\otimes} 1)\right) = \mathcal{R}_{e}\ominus \C.\xi^{K}_{p, e}.
\end{equation*}
Let $v_{e}$ be equal to $u^{r(e)}_{e}$ on these spaces and extend it by setting
\begin{itemize}
\item $v_{e}(\Omega) = \xi^{K}_{p, e}$
\item $v_{e}\left(\eta_{p, e}(\h{1}\underset{\rev{e}}{\otimes} 1)\right) = \Omega$
\end{itemize}
Summing up, we have again defined a unitary operator 
\begin{equation*}
v_{e}: \widetilde{K}_{r(e)}=\widetilde{K}_{p}\rightarrow\widetilde{K}_{p}= \widetilde{K}_{s(e)}.
\end{equation*}

\begin{lem}
For every $e\in \E^{+}(\Gr), b\in B_{e}$ we have $\widetilde{\rho}_{s(e)}(s_{e}(b)) = v_{e}\widetilde{\rho}_{r(e)}(r_{e}(b))v_{e}^{*}$.
\end{lem}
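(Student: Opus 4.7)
The plan is to split according to the two cases in the definition of $v_e$. If $e$ is not a loop, then $v_e = u_e^K$ and the identity is exactly the content of the remark preceding the lemma.

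Suppose now that $e$ is a loop at $p := s(e) = r(e)$. I would verify the equivalent relation $\widetilde{\rho}_p(s_e(b))\, v_e = v_e\, \widetilde{\rho}_p(r_e(b))$ on each summand of the orthogonal decomposition
\[
\widetilde{K}_p = \C\,\Omega \;\oplus\; \Bigl(\bigoplus_{f\in\E^+(\Gr),\, f\neq e} K_{p, f}\Bigr) \;\oplus\; \mathcal{R}_{\rev{e}} \;\oplus\; \C\,\eta_{p, e}(\h{1} \underset{\rev{e}}{\otimes} 1) \;\oplus\; \bigl(\mathcal{L}_{\rev{e}} \ominus \C\,\eta_{p, e}(\h{1} \underset{\rev{e}}{\otimes} 1)\bigr).
\]
On every $K_{p, f}$ with $f \neq e$ the operator $v_e$ coincides with $u_e^K$, and on $\mathcal{R}_{\rev{e}}$ and $\mathcal{L}_{\rev{e}} \ominus \C\,\eta_{p, e}(\h{1} \underset{\rev{e}}{\otimes} 1)$ it coincides with the module-level unitary $u_e^p$; on all these ``module'' pieces the relation reduces to the intertwining identity $s_e(b)\, u_e^p = u_e^p\, r_e(b)$, obtained directly from the defining relation $u_{\rev{e}}^p\, s_e(b)\, u_e^p = r_e(b)$ of $P$, provided one knows that $\widetilde{\rho}_p(r_e(b))$ preserves this decomposition.

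The crux of the argument is the pair of scalar identities
\[
r_e(b)\, \eta_{p, e}(\h{1} \underset{\rev{e}}{\otimes} 1) = \varepsilon_e(b)\, \eta_{p, e}(\h{1} \underset{\rev{e}}{\otimes} 1) \quad \text{and} \quad s_e(b)\, \xi^K_{p, e} = \varepsilon_e(b)\, \xi^K_{p, e}
\]
in $K_{p, e}$. Writing $b' = b - \varepsilon_e(b)\, 1$, I would prove them by computing the GNS seminorms of the differences with Lemma \ref{lem:innerproduct}: the computation collapses, after repeated use of $\EE_e^s \circ s_e = s_e$, $\EE_{\rev{e}}^s \circ r_e = r_e$ and $\varepsilon_p \circ s_e = \varepsilon_p \circ r_e = \varepsilon_e$, to $\varepsilon_e(b'^* b') = 0$, which holds by multiplicativity of the counit $\varepsilon_e$. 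The first identity ensures that the line $\C\,\eta_{p, e}(\h{1} \underset{\rev{e}}{\otimes} 1)$ is $\widetilde{\rho}_p(r_e(B_e))$-invariant (and, by adjunction, so is its orthogonal complement inside $\mathcal{L}_{\rev{e}}$), completing the verification on the module summands; combined with $v_e(\Omega) = \xi^K_{p, e}$, $v_e(\eta_{p, e}(\h{1} \underset{\rev{e}}{\otimes} 1)) = \Omega$ and the triviality of $\widetilde{\rho}_p$ on $\C\,\Omega$, the two identities also yield the desired equality on the exceptional lines $\C\,\Omega$ and $\C\,\eta_{p, e}(\h{1} \underset{\rev{e}}{\otimes} 1)$ by direct inspection.

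I expect the main difficulty to be purely combinatorial: keeping straight two competing decompositions of $K_{p, e}$, namely the one by whether a word starts with $e$ (defining $\mathcal{L}_e$, $\mathcal{R}_e$) and the one by whether it starts with $\rev{e}$ (implicit in the construction of $v_e$), and checking that $\widetilde{\rho}_p(r_e(b))$ respects the latter. Once the two scalar identities above are in hand, each of the five summands is routine.
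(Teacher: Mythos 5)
Your proposal is correct and follows essentially the same route as the paper: dispose of the non-loop case via the preceding remark, and for a loop reduce to the exceptional vectors $\Omega$ and $\xi^K_{p,e}$ (equivalently $\eta_{p,e}(\h{1}\underset{\rev{e}}{\otimes}1)$) using exactly the two scalar identities $r_e(b)\,\eta_{p,e}(\h{1}\underset{\rev{e}}{\otimes}1)=\varepsilon_e(b)\,\eta_{p,e}(\h{1}\underset{\rev{e}}{\otimes}1)$ and $s_e(b)\,\xi^K_{p,e}=\varepsilon_e(b)\,\xi^K_{p,e}$, which in the paper appear as the computation $\h{r_e(b)}\underset{\rev{e}}{\otimes}1=\h{1}\underset{\rev{e}}{\otimes}s_e(b)$ together with Remark \ref{rem:lastspaceK}. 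The only difference is presentational: you spell out the five-summand decomposition and the $\widetilde{\rho}_p(r_e(B_e))$-invariance needed to justify the reduction, which the paper compresses into the phrase ``$v_e$ only differs from the $u_e^f$'s on a finite-dimensional subspace''.
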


\begin{proof}
Since $v_e=u_e^K$ whenever $e$ is not a loop and since the equation is known to hold for $u_e^K$, we may and will assume that $e$ is a loop. Let $p=s(e)=r(e)$. Since the equation is already known to hold for the $u_{e}^{f}$'s and since $v_e$ only differs from the $u_e^{f}$'s on a finite-dimensional subspace, we can restrict our attention to finitely many vectors.
\vspace{0.2cm}

\noindent We have to check the equality on $\Omega$ and $\xi_{p, e}^{K}$:
\begin{eqnarray*}
v_{e}\widetilde{\rho}_{r(e)}(r_{e}(b))v_{e}^{*}.\Omega & = &v_{e}\widetilde{\rho}_{r(e)}(r_{e}(b)).\eta_{p, e}(\h{1}\underset{\rev{e}}{\otimes}1) 
 = \varepsilon_{s(e)}(s_{e}(b))v_{e}.\eta_{p, e}(\h{1}\underset{\rev{e}}{\otimes} 1)\\
&=&  \varepsilon_{s(e)}(s_{e}(b)) \Omega
 =  \widetilde{\rho}_{s(e)}(s_{e}(b)).\Omega
\end{eqnarray*}
using $\h{r_{e}(b)}\underset{\rev{e}}{\otimes} 1 = \h{1}\underset{\rev{e}}{\otimes}s_{e}(b)$. Similarly,
\begin{eqnarray*}
v_{e}\widetilde{\rho}_{r(e)}(r_{e}(b))v_{e}^{*}.\xi_{p, e}^{K}&= &v_{e}\widetilde{\rho}_{r(e)}(r_{e}(b)).\Omega 
 = \varepsilon_{r(e)}(r_{e}(b))v_{e}.\Omega 
 =  \varepsilon_{s(e)}(s_{e}(b)) \xi_{p, e}^{K}\\
&=& \widetilde{\rho}_{s(e)}(s_{e}(b)).\xi_{p, e}^{K}.
\end{eqnarray*}
\end{proof}

\noindent Up to now, $v_{e}$ is only defined for positive edges $e$. If $e$ is a negative edge, we set $v_{e} = (v_{\rev{e}})^{*}$. Let us deform these operators. Because the unitaries $u_{e}^{K}$ satisfy the same relations as the $v_e$'s, we have $(u^K_{e})^{*}v_{e}\in\mathcal{B}\left(\widetilde{K}_{r(e)}\right) \cap \widetilde{\rho}_{r(e)}(B_{e}^{r})'$ for every $e\in\E(\Gr)$. Let $h_{e}$ be a positive element in $\mathcal{B}\left(\widetilde{K}_{r(e)}\right)\cap \widetilde{\rho}_{r(e)}(B_{e}^{r})'$ such that $(u_{e}^{K})^{*}v_{e} = \exp(ih_{e})$. For any $t\in\mathbb{R}$, set
\begin{equation*}
v_{e}^{t} = u_{e}^{K}\exp(ith_{e}).
\end{equation*}
Applying Corollary \ref{cor:universal} with our given fixed maximal subtree $\T$, the vertex $p_{0}$, the representations $\widetilde{\rho}_{q}$ and the unitaries $v_{e}^{t}$ we get, for all $t\in\mathbb{R}$, a representation
\begin{equation*}
\widetilde{\rho}_{t}: C_{\text{max}}(\G)\rightarrow \mathcal{B}(\widetilde{K})
\end{equation*}
such that, for any path $w = (e_{1},\dots, e_{n})$ in $\Gr$ from $p_{0}$ to $p_{0}$ and any $a_{0}\in A_{s(e_{1})}$, $a_{i}\in A_{r(e_{i})}$, we have
\begin{equation*}
\widetilde{\rho}_t(a_{0}u_{e_{1}} \dots u_{e_{n}}a_{n}) = \widetilde{\rho}_{s(e_{1})}(a_{0})v_{e_{1}}^{t} \dots v_{e_{n}}^{t} \widetilde{\rho}_{r(e_{n})}(a_{n}),
\end{equation*}
where $u_{e}$ denotes the canonical unitary in $C_{\text{max}}(\G)$ for every $e\in \E(\Gr)$.

%%%%%%%%%%%%%%%%%%%%%%%%%%
\subsubsection{Deformation of $\pi$}
%%%%%%%%%%%%%%%%%%%%%%%%%%

The deformation of $\pi$ uses unitaries $w_{e} = L_{r(e)}\rightarrow L_{s(e)}$ which are quite similar to the $v_{e}$'s. For every $e\in \E(\Gr)$, the sum of the operators $u_{e}^{q}$ is denoted $u_{e}^{L}$. It is unitary and satisfies, for every $e\in\E(\Gr)$, $(u_{e}^{L})^{*}=u_{\overline{e}}^L$ and, for every $e\in\E(\Gr)$ and $b\in B_{e}$,
\begin{equation*}
u_{e}^{L}\pi_{r(e)}(r_{e}(b))(u_{e}^{L})^{*} = \pi_{s(e)}(s_{e}(b)).
\end{equation*}
Let $e\in\E(\Gr)$. We define the unitary $w_e$ in the following way:

\vspace{0.2cm}

\noindent \noindent\emph{Case 1: If $e$ is not a loop}. We set $w_{e} = u_{e}^{q}$ on $L_{r(e), q}$ for every $q\notin \{s(e), r(e)\}$. Observe that
\begin{equation*}
u_{e}^{r(e)}\left(\xi^{L}_{r(e), r(e)}\right) = \eta_{s(e), r(e)}(\h{1}\underset{e}{\otimes} 1)\text{ and }u_{e}^{r(e)}\left(\eta_{r(e), s(e)}(\h{1}\underset{\rev{e}}{\otimes} 1)\right) = \xi^{L}_{s(e), s(e)}.
\end{equation*}
Thus, we can define a unitary $w_{e}: L_{r(e), r(e)}\oplus L_{r(e), s(e)}\rightarrow L_{s(e), r(e)}\oplus L_{s(e), s(e)}$ by setting
\begin{equation*}
\left\{\begin{array}{ccc}
w_{e}\left(\xi^{L}_{r(e), r(e)}\right) & = & \xi^{L}_{s(e), s(e)} \\
w_{e}\left(\eta_{r(e), s(e)}(\widehat{1}\underset{\rev{e}}{\otimes} 1)\right) & = & \eta_{s(e), r(e)}(\widehat{1}\underset{e}{\otimes} 1)
\end{array}\right.
\end{equation*}
and $w_{e} = u_{e}^{r(e)}\oplus u_{e}^{s(e)}$ on the orthogonal complement of the above vectors.
\vspace{0.2cm}

\noindent\emph{Case 2: If $e$ is a loop}. We set $w_{e} = u_{e}^{L}$.

\vspace{0.2cm}

\noindent In both cases, we get a unitary $w_{e} = L_{r(e)}\rightarrow L_{s(e)}$ satisfying $w_e^*=w_{\overline{e}}$.

\begin{lem}
For every $e\in \E^{+}(\Gr), b\in B_{e}$, we have $\pi_{s(e)}(s_{e}(b)) = w_{e}\pi_{r(e)}(r_{e}(b))w_{e}^{*}$.
\end{lem}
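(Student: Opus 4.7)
My plan is to separate the two cases in the definition of $w_e$ and, in each, exploit the fact that $w_e$ is only a finite-rank modification of the unitary $u_e^L = \bigoplus_{q\in\V(\Gr)} u_e^q$, for which the intertwining relation $u_e^L\,\pi_{r(e)}(r_e(b))\,(u_e^L)^{*} = \pi_{s(e)}(s_e(b))$ has just been recorded. If $e$ is a loop, there is nothing to do: by definition $w_e = u_e^L$, and the desired identity is exactly this relation.

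Assume now that $e$ is not a loop. Unraveling the definition, $w_e$ and $u_e^L$ coincide on the orthogonal complement of the two-dimensional subspace $V = \C\,\xi^L_{r(e),r(e)} \oplus \C\,\eta_{r(e),s(e)}(\h{1}\underset{\rev{e}}{\ot} 1)$, and consequently $w_e^{*}$ coincides with $(u_e^L)^{*}$ outside of $w_e(V) = \C\,\xi^L_{s(e),s(e)} \oplus \C\,\eta_{s(e),r(e)}(\h{1}\underset{e}{\ot} 1)$. Since the desired identity already holds with $w_e$ replaced by $u_e^L$, I would reduce the proof to checking $w_e\,\pi_{r(e)}(r_e(b))\,w_e^{*}\xi = \pi_{s(e)}(s_e(b))\,\xi$ for $\xi$ running over the two explicit vectors spanning $w_e(V)$.

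Both sides of the identity on these two test vectors can be computed via a direct observation: each of the four vectors $\xi^L_{r(e),r(e)}$, $\eta_{r(e),s(e)}(\h{1}\underset{\rev{e}}{\ot} 1)$, $\xi^L_{s(e),s(e)}$ and $\eta_{s(e),r(e)}(\h{1}\underset{e}{\ot} 1)$ is an eigenvector for the corresponding $B_e$-action on $L_{s(e)}$ or $L_{r(e)}$, with common eigenvalue $\varepsilon_e(b) = \varepsilon_{s(e)}(s_e(b)) = \varepsilon_{r(e)}(r_e(b))$. For $\xi^L_{q,q}$ this follows from the multiplicativity of the character $\varepsilon_q$ on $A_q \subset \HH_{q,q}$, and for the vectors in the path Hilbert modules $\HH_{(e)}$ and $\HH_{(\rev{e})}$ it follows from Remark \ref{rem:lastspacetrivial} after pushing $s_e(b)$ (respectively $r_e(b)$) across the tensor sign via the appropriate right $B_e$-module structure. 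Combining this with the defining swap formulas $w_e(\xi^L_{r(e),r(e)}) = \xi^L_{s(e),s(e)}$ and $w_e(\eta_{r(e),s(e)}(\h{1}\underset{\rev{e}}{\ot} 1)) = \eta_{s(e),r(e)}(\h{1}\underset{e}{\ot} 1)$, both sides of the desired identity become $\varepsilon_e(b)$ times the test vector, and we are done.

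The only mild obstacle is keeping straight the two different path Hilbert modules $\HH_{(e)}$ and $\HH_{(\rev{e})}$, which carry distinct right $B_e$-actions (one through $s_e$, the other through $r_e$) and under which the cancellation formula for $u_e^L$ differs accordingly; but the eigenvalue computation collapses these distinctions to a single scalar through the compatibility $\varepsilon_{s(e)}\circ s_e = \varepsilon_e = \varepsilon_{r(e)}\circ r_e$, so the actual check is essentially a one-line verification.
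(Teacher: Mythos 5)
Your proof is correct and takes essentially the same route as the paper's: the loop case is immediate since $w_{e}=u_{e}^{L}$ there, and in the non-loop case one reduces (using that $w_{e}$ only differs from $u_{e}^{L}$ on the two swapped vectors, which span a subspace invariant under the $B_{e}$-action) to checking the identity on $\xi^{L}_{s(e),s(e)}$ and $\eta_{s(e),r(e)}(\h{1}\underset{e}{\ot}1)$, where both sides act by the scalar $\varepsilon_{e}(b)$. The eigenvector observation you make is exactly the computation carried out in the paper.
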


\begin{proof}
We may and will assume that $e$ is not a loop and we only have to check the equality on $\xi^{L}_{s(e),s(e)}$ and $\eta_{s(e), r(e)}(\h{1}\underset{e}{\otimes} 1)$. We have
\begin{eqnarray*}
w_{e}\pi_{r(e)}(r_{e}(b))w_{e}^{*}.\xi^{L}_{s(e),s(e)} & = & w_{e}\pi_{r(e)}(r_{e}(b)).\xi^{L}_{r(e),r(e)}
 =  \varepsilon_{r(e)}(r_{e}(b))w_{e}.\xi^{L}_{r(e),r(e)}\\
 &=&  \varepsilon_{s(e)}(s_{e}(b))\xi^{L}_{s(e),s(e)} 
 =  \pi_{s(e)}(s_{e}(b)).\xi^{L}_{s(e),s(e)}
\end{eqnarray*}
and,
\begin{eqnarray*}
w_{e}\pi_{r(e)}(r_{e}(b))w_{e}^{*}.\eta_{s(e), r(e)}(\h{1}\underset{e}{\otimes} 1) & = & w_{e}\pi_{r(e)}(r_{e}(b)).\eta_{r(e), s(e)}(\h{1}\underset{\rev{e}}{\otimes} 1) 
 =  w_{e}.\eta_{r(e), s(e)}( \h{r_{e}(b)}\underset{\rev{e}}{\otimes} 1) \\
& = & \varepsilon_{s(e)}(s_{e}(b))w_{e}.\eta_{r(e), s(e)}(\h{1}\underset{\rev{e}}{\otimes} 1)
 =  \varepsilon_{s(e)}(s_{e}(b))\eta_{s(e), r(e)}(\h{1}\underset{e}{\otimes} 1) \\
 &= & \pi_{s(e)}(s_{e}(b)).\eta_{s(e), r(e)}(\h{1}\underset{e}{\otimes} 1).
\end{eqnarray*}
\end{proof}

\noindent As for $v_{e}$, we can find $k_{e}\in \mathcal{B}(L_{r(e)}) \cap \pi_{r(e)}(B_{e}^{r})'$ such that $(u_{e}^{L})^{*}w_{e} = \exp(ik_{e})$ and set
\begin{equation*}
w_{e}^{t} = u_{e}^{L}\exp(itk_{e}).
\end{equation*}
Again, using the universal property of Corollary \ref{cor:universal} yields a representation $\pi_{t}$ of the maximal fundamental C*-algebra for all $t\in \mathbb{R}$ satisfying
\begin{equation*}
\pi_{t}(a_{0}u_{e_{1}} \dots u_{e_{n}}a_{n}) = \pi_{s(e_{1})}(a_{0})w_{e_{1}}^{t}\dots w_{e_{n}}^{t} \pi_{r(e_{n})}(a_{n}).
\end{equation*}

%%%%%%%%%%%%%%%%%%%%%%%%%%
\subsubsection{Deformation of the triple}
%%%%%%%%%%%%%%%%%%%%%%%%%%

We will now prove that the representations above yield a degenerate triple at $t = 1$.

\begin{lem}\label{lem:degenerate}
For every $x\in C_{\text{max}}(\G)$ we have $F\pi_{1}(x) F^{*} = \widetilde{\rho}_{1}(x)$.
\end{lem}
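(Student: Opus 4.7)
The plan is to exhibit both $F\pi_1(\cdot)F^{*}$ and $\widetilde{\rho}_1(\cdot)$ as the representation produced by Corollary \ref{cor:universal} from the \emph{same} data on the Hilbert spaces $\widetilde{K}_q = K_q\oplus\C.\Omega$. Note first that at $t=1$ the definitions give $w_{e}^{1} = u_{e}^{L}\exp(ik_e) = w_{e}$ and $v_{e}^{1} = v_{e}$, so $\pi_1$ and $\widetilde{\rho}_1$ are the representations assembled via the universal property from $(\pi_q, w_e)_{q,e}$ and $(\widetilde{\rho}_q, v_e)_{q,e}$, respectively. It therefore suffices to find, for each $q\in\V(\Gr)$, a unitary $F_q:L_q\to\widetilde{K}_q$ that intertwines the two sets of data.

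For each $q\in\V(\Gr)$, I would introduce a Julg-Valette-type unitary $F_q:L_q\to\widetilde{K}_q$ defined by the same recipe as $F=F_{p_0}$: it sends $\xi^{L}_{q,q}$ to $\Omega$ and, on the orthogonal complement, coincides with the Julg-Valette operator of the graph relative to the root $q$, constructed exactly as $\F$ with $p_0$ replaced by $q$. Each $F_q$ is a unitary by the same argument that works for $F$, and a verbatim generalization of Remark \ref{rem:commutation} shows that $F_q\,\pi_q(a)\,F_q^{*} = \widetilde{\rho}_q(a)$ for every $a\in A_q$.

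The key geometric step is then the intertwining identity
\begin{equation*}
F_{s(e)}\,w_{e} \;=\; v_{e}\,F_{r(e)} \qquad\text{for every } e\in\E(\Gr).
\end{equation*}
Since both $w_e$ and $v_e$ were engineered to agree with the canonical unitaries $u_e^L$ and $u_e^K$ outside of a small finite-dimensional "correction" subspace, the identity only needs to be checked on the finitely many distinguished vectors involved; the relations $w_{\rev e} = w_e^{*}$ and $v_{\rev e} = v_e^{*}$ reduce us to $e\in\E^+(\Gr)$. For instance, if $e\in\E^+(\Gr)$ is not a loop, both sides send $\xi^{L}_{r(e),r(e)}$ to $\Omega$ (using $w_e(\xi^{L}_{r(e),r(e)}) = \xi^{L}_{s(e),s(e)}$, $F_q(\xi^{L}_{q,q}) = \Omega$ and $v_e(\Omega) = \Omega$), and for $\eta_{r(e),s(e)}(\h{1}\underset{\rev e}{\ot} 1)$ both sides yield $\xi^{K}_{s(e),e}$ after unpacking the explicit action of $u_{e}^{s(e)}$ on $\h{1}\underset{\rev e}{\ot} 1$ and the positive-edge rule defining $F$. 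The loop case, where $v_e$ genuinely interchanges $\Omega$ with $\xi^{K}_{p,e}$ and $\eta_{p,e}(\h{1}\underset{\rev e}{\ot} 1)$ with $\xi^{L}_{p,p}$-type vectors, is handled by the same direct verification.

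Granted these two properties, the lemma follows by inserting $F_{r(e_i)}^{*} F_{r(e_i)} = \Id$ between consecutive factors in the Corollary \ref{cor:universal} formula. For a reduced operator $x = a_0 u_{e_1}\cdots u_{e_n} a_n$ with path from $p_0$ to $p_0$,
\begin{align*}
F\,\pi_1(x)\,F^{*} &= \bigl(F_{p_0}\pi_{p_0}(a_0)F_{p_0}^{*}\bigr)\bigl(F_{p_0}w_{e_1}F_{r(e_1)}^{*}\bigr)\bigl(F_{r(e_1)}\pi_{r(e_1)}(a_1)F_{r(e_1)}^{*}\bigr)\cdots \\
 &= \widetilde{\rho}_{p_0}(a_0)\,v_{e_1}\,\widetilde{\rho}_{r(e_1)}(a_1)\cdots v_{e_n}\,\widetilde{\rho}_{p_0}(a_n) \;=\; \widetilde{\rho}_1(x),
\end{align*}
and for $a\in A_{p_0}$ the identity $F\pi_1(a)F^{*} = \widetilde{\rho}(a) = \widetilde{\rho}_1(a)$ is precisely Remark \ref{rem:commutation}. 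These elements span a dense $*$-subalgebra of $C_{\text{max}}(\G)$ (Remark \ref{rem:redmax}), so the equality extends to the whole algebra. The principal obstacle is the edge-by-edge verification of $F_{s(e)}\,w_e = v_e\,F_{r(e)}$: although each case reduces to unpacking the relevant definitions, one must carefully track the interplay between the "last-edge" rule defining the Julg-Valette operators and the deliberately nonstandard action of $w_e$ and $v_e$ at the base vectors, particularly in the loop case.
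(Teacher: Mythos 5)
Your strategy is sound and genuinely different from the paper's. The paper never formulates the edge-wise intertwining relation: it verifies the single operator identity $F\pi_1(x)F^*=\widetilde{\rho}_1(x)$ for reduced $x$ only on the two families of vectors $\Omega$ and $\xi^K_{p_0,f}$ (via the product formula of Lemma \ref{lem:product}), extends it by multiplicativity to the subspace $\mathcal{V}$ spanned by $\widetilde{\rho}_1(C_{\text{max}}(\G)).\Omega$ and the $\widetilde{\rho}_1(C_{\text{max}}(\G)).\xi^K_{p_0,f}$, and then proves separately that $\mathcal{V}$ is dense in $\widetilde{K}$. Your telescoping through rooted operators $F_q$ replaces both the cyclicity step and the density argument by the local relation $F_{s(e)}w_e=v_eF_{r(e)}$, which, as far as I can check, is true; it also explains conceptually why $w_e$ and $v_e$ were modified exactly at those vectors. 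What it costs is that the relation must be verified on \emph{all} of $L_{r(e)}$, not merely on cyclic vectors.

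The one place your write-up is too quick is the reduction of that relation to ``the finitely many distinguished vectors.'' Off those vectors you still must prove $F_{s(e)}u^L_e=u^K_eF_{r(e)}$ (and its loop analogue with $v_e$), and this is not automatic: $u^L_e$ acts on the first leg of a path, possibly collapsing $e\cdot\rev{e}$, while $F_q$ acts on the last leg, and the two interact for paths of length $\leqslant 2$. The reduction does work, but only because of two facts that need to be stated and proved: (i) for paths of length at least two the collapse and the last-edge chopping commute leg by leg, and (ii) every length-one vector $\eta(\h{x}_0\underset{\rev{e}}{\ot}x_1)$ with $x_0\in B^r_e$, on which $u_e$ collapses the path to the empty one, is a \emph{scalar multiple} of the distinguished vector $\eta(\h{1}\underset{\rev{e}}{\ot}1)$ in the relevant GNS space --- this uses Remarks \ref{rem:lastspacetrivial} and \ref{rem:lastspaceK}, i.e.\ the fact that the GNS states are counits composed with conditional expectations, so the $B_e$-leg evaluates to a scalar. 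Without (ii) the claim that only finitely many vectors need checking is unjustified, since the collapsing vectors a priori form an infinite-dimensional family. Once (i) and (ii) are in place your argument closes; the residual case analysis (positive versus negative last edge, loop versus non-loop) is comparable in volume to, though differently organised than, the paper's proof of its Claim.
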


\begin{proof}
By Remark \ref{rem:redmax}, it suffices to prove the Lemma for $x\in A_{p_0}$ and $x=a_nu_{e_n}\dots u_{e_1}a_0$ a reduced operator in $C_{\text{max}}(\G)$, where $w=(e_n,\dots,e_1)$ is a path from $p_0$ to $p_0$. When $x\in A_{p_0}$, the relation follows from Remark \ref{rem:commutation}, since $\pi_1(x)=\pi(x)$ and $\widetilde{\rho}_1(x)=\widetilde{\rho}(x)$.
\vspace{0.2cm}

\noindent\emph{Claim. Let $x=a_nu_{e_n}\dots u_{e_1}a_0\in C_{\text{max}}(\G)$ be a reduced operator from $p_0$ to $p_0$. One has
\vspace{0.2cm}
\begin{enumerate}
\item  $[F\pi_{1}(x) F^{*}]\Omega = \widetilde{\rho}_{1}(x)\Omega$.
\vspace{0.2cm}
\item  $[F\pi_{1}(x) F^{*}]\xi^K_{p_0,f} = \widetilde{\rho}_{1}(x)\xi^K_{p_0,f}$ for every $f\in\E(\Gr)^+$.
%\item $F\pi^{1}(x) F^{*}\xi = \widetilde{\rho}^{1}(x)\xi$ for every $\xi\in\mathcal{V}$, where $\mathcal{V}$ is the linear span of $F\pi^1(C_{\text{max}}(\G)).\xi^H_{p_0}$ and $\widetilde{\rho}^1(C_{\text{max}}(\G)).\xi^K_{p_0,f}$ for $f\in\E(\Gr)^+$.
\end{enumerate}}

\vspace{0.2cm}

\noindent\emph{Proof of the claim.}$(1)$. We have
$$[F\pi_{1}(x) F^{*}]\Omega = \varepsilon_{p_0}(a_0)F\pi_{s(e_n)}(a_n)w_{e_n}\dots w_{e_1}\xi^L_{p_0},\,\,
\widetilde{\rho}_{1}(x)\Omega=\varepsilon_{p_0}(a_0)\widetilde{\rho}_{s(e_n)}(a_n)v_{e_n}\dots v_{e_1}\Omega.$$
\noindent \emph{Case 1: $e_1$ is a loop.} In that case we have, since $x$ is reduced,
$$[F\pi_{1}(x) F^{*}]\Omega =\varepsilon_{p_0}(a_0)F(\eta_{p_0,p_0}( \widehat{a}_n\underset{e_n}{\otimes}\dots\underset{e_1}{\otimes} 1))=\varepsilon_{p_0}(a_0)\left\{\begin{array}{lcl}
\eta_{p_0,e_1}( \widehat{a}_n\underset{e_n}{\otimes}\dots\underset{e_{2}}{\otimes} a_{1})&\text{if}&e_1\geqslant 0,\\
\eta_{p_0,\overline{e}_1}( \widehat{a}_n\underset{e_n}{\otimes}\dots\underset{e_{1}}{\otimes} 1)&\text{if}&e_1\leqslant 0.
\end{array}\right.$$
\noindent If $e_1$ is positive, then $v_{e_1}.\Omega=\xi^K_{s(e_1),e_1}$ and if $e_1$ is negative, then $v_{e_1}.\Omega=\eta_{r(\overline{e}_1),\overline{e}_1}(\widehat{1}\underset{e_1}{\otimes} 1)$. Because $x$ is reduced, we get
$$\widetilde{\rho}_{1}(x).\Omega=\varepsilon_{p_0}(a_0)\left\{\begin{array}{lcl}
\eta_{p_0,e_1}( \widehat{a}_n\underset{e_n}{\otimes}\dots\underset{e_{2}}{\otimes} a_{1})&\text{if}&e_1\geqslant 0,\\
\eta_{p_0,\overline{e}_1}( \widehat{a}_n\underset{e_n}{\otimes}\dots\underset{e_{1}}{\otimes} 1)&\text{if}&e_1\leqslant 0.
\end{array}\right.$$
\noindent \emph{Case 2: $e_1$ is not a loop.} Define $k=\min\{l:s(e_l)=r(e_l)\}$ if this set is non-empty and $k=0$ otherwise. Since $v_e\Omega=\Omega$ when $e$ is not a loop and since $e_l$ is not a loop for $l\leqslant k-1$ we get, for $k\geqslant 1$,
$$\widetilde{\rho}_1(x).\Omega=\varepsilon_{p_0}(a_0)\left(\prod_{l=1}^{k-1}\varepsilon_{s(e_l)}(a_l)\right)\widetilde{\rho}_{s(e_n)}(a_n)v_{e_n}\dots \widetilde{\rho}_{s(e_k)}(a_{k})v_{e_k}.\Omega,$$
and, since $w_e.\xi^L_{r(e),r(e)}=\xi^L_{s(e),s(e)}$ whenever $e$ is not a loop,
$$[F\pi_{1}(x) F^{*}]\Omega =\varepsilon_{p_0}(a_0)\left(\prod_{l=1}^{k-1}\varepsilon_{s(e_l)}(a_l)\right)F\pi_{s(e_n)}(a_n)w_{e_n}\dots \pi_{s(e_k)}(a_{k})w_{e_k}.\xi^L_{r(e_k),r(e_k)}.$$
For $k=0$ the proof is easy since $\widetilde{\rho}_1(x).\Omega=\varepsilon_{p_0}(a_0)\left(\prod_{l=1}^n\varepsilon_{s(e_l)}(a_l)\right)\Omega=[F\pi_{1}(x) F^{*}]\Omega$. When $k\geqslant 1$, $e_k$ is a loop and the end of the computation is the same as in case 1.
\vspace{0.2cm}

\noindent $(2)$. Let $f\in\E(\Gr)^+$. When $s(f)\neq p_0$, let $(f_1,\dots,f_m)$ be the unique geodesic path in $\T$ from $p_0$ to $s(f)$. Recall that if $s(f)\neq p_0$, then we have $\xi^K_{p_0,f}=\eta_{p_0,f}(\widehat{1}\underset{f_1}{\otimes}\dots\underset{f_m}{\otimes} 1)$. Hence,
$$F^*\xi^K_{p_0,f}=\left\{\begin{array}{lcl}
\eta_{p_0,r(f)}(\widehat{1}\underset{f_1}{\otimes}\dots\underset{f_m}{\otimes}\widehat{1}\underset{f}{\otimes} 1)&\text{if}&s(f)\neq p_0\,\,\,\text{and}\,\,f_m\neq\overline{f},\\
\eta_{p_0,r(f_m)}(\widehat{1}\underset{f_1}{\otimes}\dots\underset{f_m}{\otimes} 1)&\text{if}&s(f)\neq p_0\,\,\,\text{and}\,\,f_m=\overline{f},\\
\eta_{p_0,r(f)}(\widehat{1}\underset{f}{\otimes} 1)&\text{if}&s(f)=p_0.
\end{array}\right.$$
Assume that $s(f) \neq p_{0}$ and set $n_{0} = \max\{k, e_{k} = \rev{f}_{k}\}$  (set $n_{0} = 0$ if this set is empty). Observe that if $n_{0} < m$, then 
\begin{eqnarray*}
\widetilde{\rho}_{1}(x)\xi_{p_{0}, f}^{K} & = & \widetilde{\rho}_{s(e_{n})}(a_{n})u^{K}_{e_{n}} \dots u^{K}_{e_{1}}\widetilde{\rho}_{r(e_{1})}(a_{0})\xi_{p_{0}, f}^{K} \\{}
[F\pi_{1}(x)F^{*}]\xi_{p_{0}, f}^{K} & = & [F\pi_{s(e_{n})}(a_{n})u^{L}_{e_{n}}\dots u^{L}_{e_{1}}\pi_{r(e_{1})}(a_{0})F^{*}]\xi_{p_{0}, f}^{K}
\end{eqnarray*}

\vspace{0.2cm}

\noindent Hence, with $a=a_nu_{e_n}\ldots u_{e_1}a_0$, $b=\widehat{1}\underset{f_1}{\otimes}\dots\underset{f_m}{\otimes} 1$ and $b'=\widehat{1}\underset{f_1}{\otimes}\dots\underset{f_m}{\otimes}\widehat{1}\underset{f}{\otimes} 1$ we have
$$\widetilde{\rho}_{1}(x)\xi_{p_0,f}^K=\eta_{p_0,f}(a.b)\quad\text{and}\quad [F\pi_1(x)F^*]\xi_{p_0,f}^K=
\left\{\begin{array}{lcl}
F\eta_{p_0,r(f)}(a.b')&\text{if}&f_m\neq\overline{f},\\
F\eta_{p_0,r(f_m)}(a.b)&\text{if}&f_m=\overline{f}.
\end{array}\right.$$
Now, using Lemma \ref{lem:product} to decompose $a.b$ and $a.b'$ as a sum of reduced tensors and since $n_0<m$, it is easy to see that $[F\pi_1(x)F^*]\xi_{p_0,f}^K=\widetilde{\rho}_{1}(x)\xi_{p_0,f}^K$ in both cases. Hence we may and will assume for the rest of the proof that $n_0=m$.

\vspace{0.2cm}

\noindent Set $a'=a_{m-1}u_{e_{m-1}}\dots a_1u_{e_1}a_0$. We have
$$\widetilde{\rho}_1(x).\xi^K_{p_0,f}=\widetilde{\rho}_{s(e_n)}(a_n)v_{e_n}\dots v_{e_m}.\eta_{r(e_m),f}(a'.b)$$
and
$$[F\pi_1(x)F^*]\xi^K_{p_0,f}=F\pi_{s(e_n)}(a_n)w_{e_n}\dots w_{e_m}.\left\{
\begin{array}{lcl}
\eta_{r(e_m),r(f)}(a'.b')&\text{if}&f_m\neq\overline{f},\\
\eta_{r(e_m),r(f_m)}(a'.b)&\text{if}&f_m=\overline{f}.\end{array}\right.$$

\noindent Since $e_m=\overline{f}_m$, we have
\begin{equation*}
\begin{array}{lclcl}
w_{e_{m}}.\eta_{s(f_{m}),r(f)}(\h{1}\underset{f_{m}}{\otimes}\h{1}\underset{f}{\otimes} 1) &= &\eta_{s(f), r(f)}(\h{1}\underset{f}{\otimes} 1) & \text{if} & f_{m}\neq \rev{f}, \\
w_{e_{m}}.\eta_{s(f_{m}),r(f)}(\h{1}\underset{f_{m}}{\otimes} 1) &=& \eta_{s(f),r(f)}(\h{1}\underset{f}{\otimes} 1) & \text{if} & f_{m} = \rev{f}.
\end{array}
\end{equation*}

\noindent Hence, using this computation and Lemma \ref{lem:product} to decompose $a'.b$ and $a'.b'$ as sums of reduced tensors, we see that the difference $[F\pi_1(x)F^*]\xi^K_{p_0,f}-\widetilde{\rho}_1(x).\xi^K_{p_0,f}$ is equal to
$$
F\pi_{s(e_n)}(a_n)w_{e_n}\ldots w_{e_{m+1}}\pi_{s(f)}(a_m).\eta_{s(f),r(f)}(\widehat{1}\underset{f}{\ot} 1)
-\widetilde{\rho}_{s(e_n)}(a_n)v_{e_n}\dots v_{e_{m+1}}\widetilde{\rho}_{s(f)}(a_m).\xi^K_{s(f),f}.
$$
Note that if $s(f) = p_{0}$, this formula is still valid with $m = 0$ and the rest of the proof also applies. If $e_{m+1} \neq \rev{f}$ or if $\EE_s^f(a_{m})=0$, it is easy to see, because $a$ is reduced, that the difference is $0$. Otherwise, we can replace $a_{m}$ by $1$ since it acts trivially on both sides via the counit. Hence, we assume that $a_m=1$ and $e_{m+1}=\overline{f}$.

\vspace{0.2cm}

\noindent If $f$ is a loop, we have, using $e_{m+1}=\overline{f}$ (in particular $e_{m+1}$ is a negative loop)
$$\widetilde{\rho}_{s(e_n)}(a_n)v_{e_n}\dots v_{e_{m+1}}.\xi^K_{s(f),f}=\varepsilon_{s_{r(f)}}(a_{m+1})\widetilde{\rho}_{s(e_n)}(a_n)v_{e_n}\dots v_{e_{m+2}}.\Omega$$
$$F\pi_{s(e_n)}(a_n)w_{e_n}\ldots w_{e_{m+1}}.\eta_{s(f),r(f)}(\h{1}\underset{f}{\otimes} 1)=\varepsilon_{s_{r(f)}}(a_{m+1})F\pi_{s(e_n)}(a_n)w_{e_n}\ldots w_{e_{m+2}}\xi^L_{r(f),r(f)}.$$
It is easy to check, by induction and since $a$ is reduced, that these two expressions are equal. If now $f$ is not a loop, we have, since $e_{m+1}=\overline{f}$ (in particular $e_{m+1}$ is not a loop)
$$\widetilde{\rho}_{s(e_n)}(a_n)v_{e_n}\dots v_{e_{m+1}}.\xi^K_{s(f),f}=\widetilde{\rho}_{s(e_n)}(a_n)v_{e_n}\dots\widetilde{\rho}_{s(e_{m+1})}(a_{m+1}).\eta_{s(e_{m+1}),f}(\widehat{1}\underset{e_{m+1}}{\ot} 1)$$
$$F\pi_{s(e_n)}(a_n)w_{e_n}\ldots w_{e_{m+1}}.\eta_{s(f),r(f)}(\h{1}\underset{f}{\otimes} 1)$$
$$=F\pi_{s(e_n)}(a_n)w_{e_n}\ldots\pi_{s(e_{m+1})}(a_{m+1}).\eta_{s(e_{m+1}),r(e_{m+1})}(\h{1}\underset{e_{m+1}}{\otimes} 1).$$
Again, it is easy to check by induction (using the fact that $a$ is reduced and $e_{m+1}\leqslant 0$) that these two expressions are equal.$\hfill{\Box}$

\vspace{0.2cm}

\noindent\emph{End of the proof of Lemma \ref{lem:degenerate}.} It follows from the first assertion of the claim and from Remark \ref{rem:commutation} that, for $a,b\in C_{\text{max}}(\G)$ either reduced (from $p_0$ to $p_0$) or in $A_{p_0}$,
\begin{eqnarray*}
[F\pi_{1}(a) F^{*}]\left(\widetilde{\rho}_{1}(b).\Omega\right)&=&[F\pi_{1}(a) F^{*}]( [F\pi_{1}(b) F^{*}]\Omega)
=[F\pi_{1}(ab)F^{*}].\Omega
=\widetilde{\rho}_{1}(ab).\Omega\\
&=&\widetilde{\rho}_{1}(a)(\widetilde{\rho}_{1}(b).\Omega).
\end{eqnarray*}
A similar computation, using the second assertion of the claim, shows that we also have
\begin{equation*}
[F\pi_{1}(a) F^{*}](\widetilde{\rho}_{1}(b).\xi^K_{p_0,f})=\widetilde{\rho}_{1}(a)(\widetilde{\rho}_{1}(b).\xi^K_{p_0,f})
\end{equation*}
for every $f\in\E(\Gr)^+$. Hence, for every $x\in C_{\text{max}}(\G)$ we have $[F\pi_1(x)F^*]\xi=\widetilde{\rho}_1(x)\xi$ for every $\xi\in\mathcal{V}$, where $\mathcal{V}\subset\widetilde{K}$ is the linear span of $\widetilde{\rho}_1(C_{\text{max}}(\G)).\Omega$ and $\widetilde{\rho}_1(C_{\text{max}}(\G)).\xi^K_{p_0,f}$ for $f\in\E(\Gr)^+$. Thus, it suffices to show that $\mathcal{V}$ is dense in $\widetilde{K}$.

\vspace{0.2cm}

\noindent Let $f\in \E^{+}(\Gr)$, let $w = (e_{n}, \dots, e_{1})$ be a path from $p_{0}$ to $s(f)$, set $x = \h{x}_{n} \underset{e_{n}}{\otimes} \dots \underset{e_{1}}{\otimes} x_{0}$ and $x^{\sharp} = x_{n}u_{e_{n}} \dots u_{e_{1}}x_{0}\in C_{\text{max}}(\G)$. Let $g=(f_1,\dots,f_l)$ be the unique geodesic path in $\T$ from $p_{0}$ to $s(f)$. Set $u_g=u_{f_1}\dots u_{f_l}$, $v_g=v_{f_1}\dots v_{f_l}$ and observe that, by definition of $\widetilde{\rho}_1$, we have, $\widetilde{\rho}_1(x^{\sharp}u_g^*)=\widetilde{\rho}_{s(e_n)}(x_n)v_{e_n}\dots v_{e_1}\widetilde{\rho}_{r(e_1)}(x_0)v_g^*$. Moreover, since $f_i\in\E(\mathcal{T})$ for all $i$, none of the $f_i$'s are loops, hence $v_{g}^*(\Omega) = \Omega$ and $v_{g}^*(\xi^{K}_{p_{0}, f}) = \xi^{K}_{s(f), f}$.

\vspace{0.2cm}

\noindent Assume that $e_1\neq\overline{f}$ or $e_1=\overline{f}$ and $\EE^r_{e_1}(x_0)=0$ then,
$$v_{e_1}\widetilde{\rho}_{r(e_1)}(x_0)\xi^K_{s(f),f}=u^K_{e_1}\widetilde{\rho}_{r(e_1)}(x_0)\xi^K_{s(f),f}.$$
Since $x^{\sharp}$ is reduced, we deduce that
$$\widetilde{\rho}_1(x^{\sharp}u_g^*).\xi^K_{p_0,f}=\widetilde{\rho}_{s(e_n)}(x_n)v_{e_n}\dots v_{e_1}\widetilde{\rho}_{r(e_1)}(x_0).\xi^K_{s(f),f}=\eta_{p_{0}, f}(x)\in\mathcal{V}.$$
\noindent Observe that the previous computation also works for $e_1=\overline{f}$ and any $x_0$ whenever $e_1$ is not a loop since in this case we have $v_{e_1}=u_{e_1}^K$.

\vspace{0.2cm}

\noindent Assume now that $e_1=\overline{f}$, $x_0\in B^r_{e_1}$ and $e_1$ is a loop. Then $v_{e_1}(\Omega)=v_f^*(\Omega)=\eta_{s(e_1),f}(\widehat{1}\underset{f}{\ot} 1)$. Since $x^{\sharp}$ is reduced, we get

\begin{eqnarray*}
\widetilde{\rho}_{1}(x^{\sharp}u_{g}^*).\Omega
&=&\widetilde{\rho}_{s(e_n)}(x_n)v_{e_n}\dots v_{e_1}\widetilde{\rho}_{r(e_1)}(x_0).\Omega\\
&=&\varepsilon_{r(e_1)}(x_0)\widetilde{\rho}_{s(e_n)}(x_n)v_{e_n}\dots v_{e_2}\widetilde{\rho}_{s(e_1)}(x_1).\eta_{s(e_1),f}(\widehat{1}\underset{f}{\ot} 1)\\
&=&\varepsilon_{r(e_1)}(x_0)\eta_{p_0,f}(\h{x}_{n} \underset{e_{n}}{\otimes} \dots \underset{e_{1}}{\otimes} 1)=\eta_{p_0,f}(x)\in\mathcal{V},
\end{eqnarray*}
where the last equality follows from Remark \ref{rem:lastspaceK} and the fact that $x_0\in B^r_{e_1}$.
\end{proof}

%%%%%%%%%%%%%%%%%%%%%%%%%%
\subsubsection{End of the proof}
%%%%%%%%%%%%%%%%%%%%%%%%%%

We are now ready to prove that $\h{\G}$ is K-amenable.

\begin{proof}[Proof of Theorem \ref{thm:kamenabilitygraph}]
Let $L_{t}$ and $\widetilde{K}_{t}$ be the Hilbert spaces $L$ and $\widetilde{K}$ endowed with the representations $\pi_{t}$ and $\widetilde{\rho}_{t}$ respectively. Note that since the unitaries $v_{e}^{t}$ and $w_{e}^{t}$ are finite-dimensional perturbations of $u_{e}$, the maps $\pi_{t}(a) - \pi(a)$ and $\widetilde{\rho}_{t}(a) - \rho(a)$ have finite rank for every reduced operator $a$ and every $t\in\R$. This implies that $F\circ\pi_{t}(x) - \widetilde{\rho}_{t}(x)\circ F$ is a compact operator for every $x\in P$, hence $(L_{t}, \widetilde{K}_{t}, F)$ is a KK-element, denoted $\widetilde{\gamma}_{t}$.

\vspace{0.2cm}

\noindent The maps $t\mapsto v_{e}^{t}$ and $t\mapsto w_{e}^{t}$ are norm continuous for every $e\in\E(\Gr)$ by construction, implying that the representation $\pi_{t}$ and $\widetilde{\rho}_{t}$ are pointwise norm continuous. Thus, $(\widetilde{\gamma}_{t})_{t}$ is an homotopy in KK-theory. Setting $t = 0$ gives $\widetilde{\gamma}$ by definition, and it was proven in Lemma \ref{lem:degenerate} that setting $t = 1$ gives a degenerate triple. The proof is therefore complete.
\end{proof}

%%%%%%%%%%%%%%%%%%%%%%%%%%%%%%%%%%%%%%%%
\section{Appendix: graphs of von Neumann algebras}\label{sec:appendix}
%%%%%%%%%%%%%%%%%%%%%%%%%%%%%%%%%%%%%%%%

\noindent All the results of Section \ref{sec:graphCstar} can also be worked out in the von Neumann algebraic setting. Since the techniques are the same, we only give a concise exposition of the main points.

\begin{de}
A \emph{graph of von Neumann algebras} is a tuple
$$(\Gr, (M_{q}, \Fi_{q})_{q}, (N_{e}, \Fi_{e})_{e}, (s_{e})_{e}),$$
where $\Gr$ is a connected graph and
\begin{itemize}
\item For every $q\in\V(\Gr)$ and every $e\in\E(\Gr)$, $M_{q}$ and $N_{e}$ are von Neumann algebras  with distinguished normal faithful states $\Fi_{q}$ and $\Fi_{e}$ respectively.
\item For every $e\in \E(\Gr)$, $N_{\rev{e}} = N_{e}$ and $\Fi_{\rev{e}} = \Fi_{e}$.
\item For every $e\in \E(\Gr)$, $s_{e}: N_{e} \rightarrow M_{s(e)}$ is a unital faithful normal state-preserving $*$-homomorphism such that $\sigma^{s(e)}_{t}\circ s_{e} = s_{e}\circ \sigma^{e}_{t}$ for all $t\in \mathbb{R}$, where $\sigma_{t}^{e}$ and $\sigma_{t}^{s(e)}$ are the modular automorphism groups of the states $\Fi_{e}$ and $\Fi_{s(e)}$ respectively.
\end{itemize}
For every $e\in \E(\Gr)$ we set $r_{e} = s_{\rev{e}}: N_{e} \rightarrow M_{r(e)}$, $N_{e}^{s} = s_{e}(N_{e})$ and $N_{e}^{r} = r_{e}(N_{e})$.
\end{de}

\noindent We will always use the shorthand notation $(\Gr, (M_{q})_{q}, (N_{e})_{e})$ for a graph of von Neumann algebras.

%%%%%%%%%%%%%%%%%%%%%%%%%%
\subsection{Path bimodules}
%%%%%%%%%%%%%%%%%%%%%%%%%%

As in the C*-algebra case, we will use paths in $\Gr$ to build a representation of the fundamental von Neumann algebra. The main difference is that we will work with bimodules over von Neumann algebras instead of Hilbert modules, which is in some sense more tractable. %The counterpart is that we have to keep the commutant of the von Neumann algebras in the picture in order to get nice properties for the objects we build. %That is the reason why all constructions hereafter will have both a left and a right version.
\vspace{0.2cm}

\noindent Let $\EE_{e}^{s}$ be the unique state-preserving (for the state $\varphi_e\circ s_e^{-1}$ on $N_e^s$) normal faithful conditional expectation from $M_{s(e)}$ onto $N_{e}^{s}$ and denote its kernel by $M_{s(e)}\ominus N_{e}^{s}$. We canonically identify ${\rm L}^{2}(N_{e}^{s})$ with a closed subspace of ${\rm L}^{2}(M_{s(e)})$.

\vspace{0.2cm}

\noindent For $n\geqslant 1$ and $w = (e_{1}, \dots, e_{n})$ a path in $\Gr$, we define a $M_{s(e_1)}$-$M_{r(e_n)}$-bimodule
$$H_{w} = K_{0} \underset{N_{e_1}}{\otimes} \dots \underset{N_{e_n}}{\otimes} K_{n},$$
where $K_{0} = \Lde(M_{s(e_{1})})$, $K_n=\Lde(M_{r(e_{n})})$ and, for $1\leq i\leq n-1$,
$$K_i=\left\{\begin{array}{lcl}
\Lde(M_{s(e_{i+1})})&\text{if}&e_{i+1}\neq \rev{e}_{i},\\
\Lde(M_{s(e_{i+1})})\ominus \Lde(N_{e_{i+1}}^{s})&\text{if}&e_{i+1}= \rev{e}_{i}.\end{array}\right.$$
We view  $K_{0} = \Lde(M_{s(e_{1})})$ as a $M_{s(e_1)}$-$N_{e_1}$-bimodule, where the left $M_{s(e_1)}$-action is the obvious one and the right $N_{e_1}$-action is given by $\xi\cdot x=\xi s_{e_1}(x)$. Similarly, we view $K_n=\Lde(M_{r(e_{n})})$ as a $N_{e_n}$-$M_{r(e_n)}$-bimodule, where the right $M_{r(e_n)}$-action is the obvious one and the left $N_{e_n}$-action is given by $x\cdot\xi=r_{e_n}(x)\xi$. Finally, for $1\leq i\leq n-1$, we view $K_i$ as a $N_{e_i}$-$N_{e_{i+1}}$-bimodule by setting $x\cdot\xi\cdot y=r_{e_i}(x)\xi s_{e_{i+1}}(y)$.

\vspace{0.2cm}

\noindent For any two vertices $p, q\in \V(\Gr)$, we now set
\begin{equation*}
H_{p, q} = \bigoplus_{w} H_{w},
\end{equation*}
where the sum runs over all paths $w$ in 
$\Gr$ from $p$ to $q$. By convention, the bimodule $H_{p, p}$ also contains the "empty path" $M_{p}$-$M_{p}$-bimodule $H_{\emptyset}=\Lde(M_{p})$.

%%%%%%%%%%%%%%%%%%%%%%%%%%%%%%%%%%%
\subsection{The fundamental von Neumann algebra}
%%%%%%%%%%%%%%%%%%%%%%%%%%%%%%%%%%

We first have to define unitaries realizing the relations of the fundamental algebra. Let us fix a vertex $p_0\in \V(\Gr)$.

\vspace{0.2cm}

\noindent We define, for every $e\in \E(\Gr)$, an operator $u_{e}: H_{r(e), p_0}\rightarrow H_{s(e), p_0}$ in the following way. Let $w$ be a path from $r(e)$ to $p_0$.
\begin{itemize}
\item If $w$ is the empty path or does not begin with $\bar{e}$ and $\xi\in H_{w}$, we set
$$u_e\xi=\widehat{1}\ot\xi\in H_{(ew)}\subset  H_{s(e),p_0}.$$
\item If $w$ begins with $\bar{e}$ and $\xi\in H_{w}$, $\xi=\widehat{x}_0\ot\xi^{'}$ with $x_0\in M_{r(e)}$, we set
$$u_e\xi=\left\{\begin{array}{ll}
\widehat{1}\ot\xi & \text{if}\,\, x_0\in M_{r(e)}\ominus N_e^r \\
s_e\circ r_e^{-1}(x_0)\xi' & \text{if}\,\, x_0\in N_e^r
\end{array}\right.$$
\end{itemize}
It is easy to check that $u_e$ extends to a unitary operator and $u_e^{*}=u_{\bar{e}}$. Moreover we have,
\begin{eqnarray*}\label{relation}
u_{\bar{e}}s_e(b)u_e & = & r_e(b)\quad\text{for all}\quad b\in N_e
\end{eqnarray*}
Note that $u_e\,:\,H_{r(e),p_0}\rightarrow H_{s(e),p_0}$ commutes with the right actions of $M_{p_0}$.

\vspace{0.2cm}

\noindent The right version $v_e\,:\,H_{p_0,s(e)}\rightarrow H_{p_0,r(e)}$ is defined in a similar way. Let $w$ be a path from $p_0$ to $s(e)$,
\begin{itemize}
\item If $w$ is the empty path or does not end with $\bar{e}$ and $\xi\in H_{w}$, we set
\begin{equation*}
v_e\xi=\xi\ot\widehat{1}.
\end{equation*}
\item If $w$ ends with $\bar{e}$ and $\xi\in H_{w}$, $\xi=\xi^{'}\ot\widehat{x}_n$ with $x_n\in M_{s(e)}$, we set
$$v_e\xi=\left\{\begin{array}{ll}
\xi\ot\widehat{1} & \text{if}\,\, x_n\in M_{s(e)}\ominus N_e^s\\
\xi^{'}r_e\circ s_e^{-1}(x_n) & \text{if}\,\, x_n\in N_e^s
\end{array}\right.$$
\end{itemize}
Again, $v_e$ extends to a unitary operator, $v_e^*=v_{\bar{e}}$, $v_e\,:\,H_{p_0,s(e)}\rightarrow H_{p_0,r(e)}$ commutes with the left action of $M_{p_0}$ and we have, for every $b\in N_{e}$,
$$v_{\bar{e}}\rho(r_e(b))v_e = \rho(s_e(b)),$$
where $\rho$ denotes the right action of $M_{q}$ on $H_{p_0,q}$ for any vertex $q$.

\vspace{0.2cm}

\noindent For a path $w=(e_1,\ldots,e_n)$ we define unitaries
$$u_{w} = u_{e_{1}} \dots u_{e_{n}}\,:\,H_{r(e_n), p_0}\rightarrow H_{s(e_{1}), p_0}\quad\text{and}\quad v_{w}= v_{e_{n}}\dots v_{e_1}\,: \, H_{p_0,s(e_{1})}\rightarrow H_{p_0,r(e_{n})}.$$

\begin{rem}\label{rem:commutationvN}
We may also define, as in the C*-algebra case, the operators $u_e$ and $v_e$ relative to another base $p\in\V(\Gr)$ instead of the fixed base $p_0$. We get unitaries $u_e^{ p }\,:\,H_{r(e),p}\rightarrow H_{s(e),p}$ and $v^{ p }_e\,:\;H_{p, s(e)}\rightarrow H_{p,r(e)}$, satisfying the same relations as before and such that $u_e^{ p }$ commutes with the right $M_p$-actions and $v_e^{ p }$ commutes with the left $M_p$-actions. Moreover, one can easily check that for every $e, f\in \E(\Gr)$,
$$v_f^{s(e)}u_e^{s(f)}=u_e^{r(f)}v_f^{r(e)}.$$
When $w=(e_1,\dots,e_n)$ is a path, if we set $u_{w}^{p} = u^{p}_{e_{1}} \dots  u^{p}_{e_{n}}$ and $v_{w}^{p} = v^{p}_{e_{n}} \dots v^{p}_{e_{1}}$ then, for all paths $w, z$, we have $v_{z}^{s(w)}u_{w}^{s(z)} = u_{w}^{r(z)}v_{z}^{r(w)}$.
\end{rem}

\begin{de}
Let $(\Gr, (M_{q}), (N_{e}))$ be a graph of von Neumann algebras and let $p_{0}\in \V(\Gr)$.
\begin{itemize}
\item The \emph{(left) fundamental von Neumann algebra} of $(\Gr, (M_{q}), (N_{e}))$ in $p_{0}$ is
$$
\pi_{1}(\Gr, (M_{q}), (N_{e})) =\left\langle (u_{z})^{*}M_{q}u_{w} \vert q \in \V(\Gr),w,z \text{ paths from $q$ to $p_{0}$ }\right\rangle\subset\B(H_{p_{0}, p_{0}}).
$$
\item The \emph{right fundamental von Neumann algebra} of $(\Gr, (M_{q}), (N_{e}))$ in $p_{0}$ is
$$
\pi'_{1}(\Gr, (M_{q}), (N_{e})) = \left\langle (v_{z})^{*}\rho(M_{q})v_{w} \vert q \in \V(\Gr),w,z \text{ paths from $p_0$ to $q$ }\right\rangle \subset\B(H_{p_{0}, p_{0}}).
$$
\end{itemize}
\end{de}

\noindent By Remark \ref{rem:commutationvN} it is easy to check that the left and right fundamental von Neumann algebras commute with each other. From now on we denote by $M$ the fundamental von Neumann algebra of $(\Gr, (M_{q}), (N_{e}))$ in $p_{0}$ and by $M_{R}$ its right version.
\begin{rem}\label{rem:commutant}
The fundamental von Neumann algebras do not really depend on the choice of $p_{0}$. This follows from the same argument as in Remark \ref{rem:basepoint} for the C*-algebra case, using a maximal subtree $\T$ of $\Gr$.
\end{rem}

\begin{ex}
Using the graphs of Examples \ref{ex:freeproduct} and \ref{ex:hnn}, one recovers the amalgamated free product and HNN extension constructions for von Neumann algebras.
\end{ex}

\noindent In order to have a convenient way of defining both left and right reduced operators, we introduce the following notation inspired from \cite[Sec 5.2]{serre1977arbres}: let $w = (e_{1}, \dots, e_{n})$ be a path in $\Gr$ from $p$ to $q$ and let $\mu = (x_{0}, \dots, x_{n})$, where $x_0\in M_p$ and $x_{i}\in M_{r(e_{i})}$ for $1\leqslant i\leqslant n$. We say that $(w, \mu)$ is a \emph{reduced pair} (from $p$ to $q$) if, for $1\leqslant i\leqslant n-1$, $e_{i+1} = \rev{e}_{i}$ implies that $x_{i}\in M_{r(e_i)} \ominus N_{e_{i}}^{r}$. Whenever $(w, \mu)$ is a reduced pair, we define a \emph{left reduced operator} (from $p$ to $q$) $\vert w, \mu\vert: H_{q, p_0} \rightarrow H_{p, p_0}$ by
\begin{equation*}
\vert w,\mu\vert = x_{0}u_{e_{1}} \dots u_{e_{n}}x_{n}
\end{equation*}
and a \emph{right reduced operator} (from $p$ to $q$) $\vert w, \mu\vert': H_{p_{0},p} \rightarrow H_{p_0,q}$ by
\begin{equation*}
\vert w, \mu\vert' = \rho(x_{n})v_{e_{n}} \dots v_{e_{1}}\rho(x_{0}).
\end{equation*}
It is easy to check that, as in the C*-algebra case, the left reduced operators from $p_0$ to $p_0$ are in $M$. Moreover, the linear span of $M_{p_0}$ and the left reduced operators from $p_0$ to $p_0$ is a $\sigma$-weakly dense $*$-subalgebra of $M$. Similarly, the right reduced operators from $p_0$ to $p_0$ are in $M_R$ and the linear span of $\rho(M_{p_0})$ and the right reduced operators from $p_0$ to $p_0$ is a $\sigma$-weakly dense $*$-subalgebra of $M_{R}$.

%%%%%%%%%%%%%%%%%%%%%%
\subsection{Modular theory}
%%%%%%%%%%%%%%%%%%%%%%

Let $H= H_{p_{0}, p_{0}}$ and $\Omega = \h{1}_{p_{0}}\in \Lde(M_{p_{0}})\subset H$. We now build the fundamental state on $M$ and describe its modular theory. First note that, whenever $(w, \mu)$ is a reduced pair with $w = (e_{1}, \dots, e_{n})$ and $\mu = (x_{0}, \dots, x_{n})$, we have
\begin{equation*}
\vert w, \mu\vert.\Omega = \h{x}_{0} \otimes\dots\otimes \h{x}_{n} = \vert w, \mu\vert'.\Omega.
\end{equation*}
Hence, $\Omega$ is a cyclic vector for both $M$ and $M_{R}$. Since $M_{R}\subset M'$, $\Omega$ is also a separating vector for $M$. Hence, the normal state defined by
\begin{equation*}
\Fi(x) = \langle \Omega, x.\Omega\rangle
\end{equation*}
is faithful and $(H, \Omega)$ is its GNS construction. Note also that $\varphi(x) = 0$ for any reduced operator $x\in M$. From the GNS construction, one can easily compute the modular operators. To simplify notations, set
\begin{equation*}
\Sigma_{w}: \left\{\begin{array}{ccc}
H_{w} & \rightarrow & H_{\overline{w}} \\
\xi_{0} \otimes \dots \otimes \xi_{n} & \mapsto & \xi_{n} \otimes \dots \otimes \xi_{0}
\end{array}\right.
\end{equation*}
for any path $w=(e_1,\dots, e_n)$ in $\Gr$, where $\overline{w}=(\overline{e}_n,\dots,\overline{e}_1)$. The following is easy to check.

\begin{prop}\label{prop:modulargraph}
Let $J$ and $\nabla$ be respectively the modular conjugation and the modular operator of $\Fi$. We have
\begin{equation*}
J = \bigoplus_{w = (e_{1}, \dots, e_{n})}(J_{r(e_{n})}\otimes \dots \otimes J_{s(e_{1})})\Sigma_{w}\quad\text{and}\quad
\nabla=\bigoplus_{w = (e_{1}, \dots, e_{n})}\nabla_{s(e_{1})}\otimes \nabla_{r(e_{1})}\otimes \dots \otimes \nabla_{r(e_{n})},
\end{equation*}
where $J_{q}$ and $\nabla_{q}$ denote respectively the modular conjugation and modular operator of $\Fi_{q}$. In particular, $\varphi$ is a trace if and only if $\varphi_q$ is a trace for any $q\in\V(\Gr)$.
\end{prop}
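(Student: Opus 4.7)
The plan is to apply uniqueness of the polar decomposition of Tomita's closed antilinear operator $S_\varphi$, defined as the closure of $x\Omega\mapsto x^*\Omega$ on $M\Omega$. I will define operators $\tilde J$ and $\tilde\nabla$ by the stated formulas, show that $\tilde J\tilde\nabla^{1/2}$ agrees with $S_\varphi$ on a core, and conclude by uniqueness.

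The key preliminary observation is that for a reduced pair $(w,\mu)$ with $w = (e_1,\dots,e_n)$ and $\mu = (x_0,\dots,x_n)$, the adjoint $|w,\mu|^*$ equals $|\rev{w},\mu^*|$, where $\rev{w} = (\rev{e}_n,\dots,\rev{e}_1)$ and $\mu^* = (x_n^*,\dots,x_0^*)$; this is again a reduced pair since each kernel $M_q\ominus N_e^r$ is $*$-invariant. Evaluating on $\Omega$ gives
\[
S_\varphi(\h{x}_0\ot\cdots\ot\h{x}_n) = \h{x_n^*}\ot\cdots\ot\h{x_0^*} \in H_{\rev{w}}.
\]
I then define $\tilde\nabla|_{H_w} = \nabla_{s(e_1)}\ot\nabla_{r(e_1)}\ot\cdots\ot\nabla_{r(e_n)}$; this is positive self-adjoint and well-defined on the relative tensor product because the compatibility $\sigma_t^{s(e)}\circ s_e = s_e\circ\sigma_t^e$ forces $\nabla_{s(e)}^{it}$ to preserve $\Lde(N_e^s)$ and restrict there to $\nabla_e^{it}$. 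Similarly $\tilde J|_{H_w}\colon H_w\to H_{\rev{w}}$ is the antilinear isomorphism $\xi_0\ot\cdots\ot\xi_n\mapsto J_{r(e_n)}\xi_n\ot\cdots\ot J_{s(e_1)}\xi_0$, which descends to the relative tensor product via the identity $J_q(\xi\cdot y) = \pi_q(y^*)J_q\xi$ exchanging right and left $N_{e_i}$-actions between matching slots of $H_w$ and $H_{\rev{w}}$; a short computation then gives $\tilde J^2 = 1$.

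To finish, let $\mathcal{A}_q\subset M_q$ denote the Tomita subalgebra of $\sigma^q$-analytic elements. The linear span $\mathcal{A}$ of $\mathcal{A}_{p_0}$ together with all reduced operators $|w,\mu|$ whose entries $x_i$ lie in $\mathcal{A}_{r(e_i)}$ is a $\sigma$-weakly dense $*$-subalgebra of $M$, and one verifies by induction on path length that $\sigma_z^\varphi(\mathcal{A})\subset\mathcal{A}$ for every $z\in\C$, so $\mathcal{A}\Omega$ is a core for $S_\varphi$. A direct computation on such analytic reduced tensors gives $\tilde J\tilde\nabla^{1/2}(\h{x}_0\ot\cdots\ot\h{x}_n) = \h{x_n^*}\ot\cdots\ot\h{x_0^*}$, matching $S_\varphi$, and uniqueness of the polar decomposition yields $J = \tilde J$, $\nabla = \tilde\nabla$. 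The trace characterization is then immediate: $\varphi$ is a trace iff $\nabla = 1$, iff each local summand is trivial; restricting to $H_\emptyset$ forces $\nabla_{p_0} = 1$, and choosing a path from $p_0$ to any $q$ forces $\nabla_q = 1$, while the converse is clear. The main obstacle will be justifying the well-definedness of $\tilde J$ on the relative tensor product together with the core property of $\mathcal{A}\Omega$, both of which rest essentially on the modular compatibility $\sigma_t^{s(e)}\circ s_e = s_e\circ\sigma_t^e$ built into the definition of a graph of von Neumann algebras.
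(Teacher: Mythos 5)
Your proposal is correct, and it supplies an actual argument where the paper offers none: the paper simply asserts that the formulas are ``easy to check,'' so there is no proof to compare against. Your route --- compute $S_\varphi$ on reduced vectors via $\vert w,\mu\vert^*=\vert\rev{w},\mu^*\vert$, define candidate operators $\tilde J,\tilde\nabla$ slot-by-slot, and invoke uniqueness of the polar decomposition --- is the standard Tomita--Takesaki verification used for amalgamated free products (Ueda) and it adapts to path bimodules exactly as you describe; the compatibility $\sigma_t^{s(e)}\circ s_e=s_e\circ\sigma_t^e$ is indeed what makes $\tilde\nabla$ and $\tilde J$ descend to the relative tensor products and preserve the constrained slots $\Lde(M_{s(e)})\ominus\Lde(N_e^s)$. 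One step should be rephrased to avoid circularity: you cannot invoke $\sigma_z^\varphi(\mathcal A)\subset\mathcal A$ to get a core for $S_\varphi$ before the modular group of $\varphi$ has been identified. The standard fix is either to show directly that $t\mapsto\mathrm{Ad}(\tilde\nabla^{it})$ is a $\varphi$-preserving automorphism group of $M$ satisfying the KMS condition (which identifies it as $\sigma^\varphi$ and makes $\mathcal A\Omega$ a core for $\tilde\nabla^{1/2}$), or to run the dual computation on the right fundamental algebra $M_R$ and check that $\tilde J\tilde\nabla^{-1/2}$ agrees with $y\Omega\mapsto y^*\Omega$ there, so that $S_\varphi\subset F_\varphi^*$ pins down $S_\varphi=\tilde J\tilde\nabla^{1/2}$; the identity $J\vert w,\mu\vert^*J=\vert w,\mu\vert'$ recorded in the paper's subsequent remark is precisely what this second route requires. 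With that adjustment the argument is complete, and the trace characterization follows as you state.
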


\begin{rem}
Let $(w, \mu)$ be a reduced pair with $w$ a path in $\Gr$ from $p_{0}$ to $p_{0}$ and observe that
\begin{equation*}
J\vert w, \mu\vert^{*}J = \vert w,\mu\vert'.
\end{equation*}
This implies that $M_{R}$ is exactly the commutant $M'$ of $M$.
\end{rem}

\noindent Let us fix a maximal subtree $\T$ of $\Gr$. The embeddings $\pi^{\T}_{q}: M_{q} \rightarrow M$ given by $\T$ (defined as in Section \ref{quotient}) are state-preserving and commute with the modular automorphism groups by Proposition \ref{prop:modulargraph}. Hence, for every $q\in \V(\Gr)$ and $e\in \E(\Gr)$, there are conditional expectations $\EE_{q}: M\rightarrow  \pi_{q}(M_{q})$ and $\EE_{e}^{s}: M\rightarrow \pi_{s(e)}(N_{e}^{s})$ such that 
\begin{equation*}
\left\{\begin{array}{ccc}
\varphi_{q}\circ\pi_{q}^{-1}\circ \EE_{q} & = & \varphi \\
\varphi_{e}\circ\pi_{s(e)}^{-1}\circ \EE_{e}^{s} & = & \varphi_{e}
\end{array}\right.
\end{equation*}

%%%%%%%%%%%%%%%%%%%%%%
\subsection{Universal property and unscrewing process}
%%%%%%%%%%%%%%%%%%%%%%

We can now give a universal property for the fundamental von Neumann algebra. With this in hand, we will easily get an unscrewing process. Assume that we have
\begin{itemize}
\item For every $p\in\V(\Gr)$, a Hilbert space $K_p$ and a faithful normal unital $*$-homomorphism $\pi_p\,:\,M_p\rightarrow\mathcal{B}(K_p)$.
\item For every $e\in\E(\Gr)$, a unitary $w_{e}\in \B(K_{r(e)}, K_{s(e)})$ such that $w_{e}^{*} = w_{\rev{e}}$ and for every $e\in \E(\Gr)$ and $b\in N_{e}$,
\begin{equation*}
w_{\rev{e}}\pi_{s(e)}(s_{e}(b))w_{e} = \pi_{r(e)}(r_{e}(b)).
\end{equation*}
\end{itemize}
Let $L$ be the $\sigma$-weakly closed linear span of $\pi_{p_{0}}(M_{p_{0}})$ and all elements of the form
\begin{equation*}
\pi_{s(e_{1})}(a_{0})w_{e_{1}} \dots w_{e_{n}}\pi_{r(e_{n})}(a_{n})
\end{equation*}
in $\B(K_{p_{0}})$, where $n\geqslant 1$, $(e_{1}, \dots, e_{n})$ is a path in $\Gr$ from $p_{0}$ to $p_{0}$, $a_{k}\in M_{r(e_{k})}$, $1 \leqslant k \leqslant n$, and $a_{0}\in M_{p_{0}}$. Using the relations, we see that $L$ is a von Neumann algebra. We assume moreover the existence of a faithful normal state $\psi\in L_{*}$ such that $\psi\circ\pi_{p_0}=\varphi_{p_0}$ and, for every reduced pair $(w,\mu)$ from $p_0$ to $p_0$ we have,
$$\psi(\pi_{s(e_{1})}(x_{0})w_{e_{1}} \dots w_{e_{n}}\pi_{r(e_{n})}(x_{n})) = 0,$$
where $w=(e_1,\ldots,e_n)$ and $\mu=(x_0,\ldots,x_n)$.

\begin{prop}\label{prop:universalvonneumann}
With the hypothesis and notations above, there exists a unique normal $*$-isomorphism
\begin{equation*}
\pi: \pi_{1}(\Gr, (M_{q}), (N_{e}), p_{0})\rightarrow L
\end{equation*}
such that $\pi = \pi_{p_{0}}$ on $M_{p_{0}}$ and
\begin{equation*}
\pi(a_{0}u_{e_{1}} \dots u_{e_{n}} a_{n}) = \pi_{s(e_{1})}(a_{0})w_{e_{1}} \dots w_{e_{n}}\pi_{r(e_{n})}(a_{n})
\end{equation*}
for every reduced operator $a_{0}u_{e_{1}} \dots u_{e_{n}} a_{n}\in \pi_{1}(\Gr,(M_{q}), (N_{e}), p_{0})$. Moreover, $\varphi = \psi\circ \pi$.
\end{prop}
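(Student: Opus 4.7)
The plan is to construct $\pi$ as conjugation by a unitary between the GNS Hilbert space $H=H_{p_{0},p_{0}}$ of $(M,\varphi)$ and the GNS Hilbert space of $(L,\psi)$, in direct analogy with the proof of Proposition \ref{prop:universalreduced} for the reduced fundamental C*-algebra. Uniqueness is immediate: the prescribed formula determines $\pi$ on the linear span $\mathcal{A}$ of $M_{p_{0}}$ and the left reduced operators from $p_{0}$ to $p_{0}$, and a normal $*$-homomorphism between von Neumann algebras is determined by its restriction to the $\sigma$-weakly dense subalgebra $\mathcal{A}\subset M$.

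For existence, let $(K',\iota,\xi_{\psi})$ denote the GNS construction of $\psi$; since $\psi$ is faithful I identify $L$ with $\iota(L)\subset \B(K')$ and take $\iota=\mathrm{id}$. I will define a linear map $V\,:\,H\to K'$ on the dense subspace $\mathcal{A}.\Omega$ by
$$V(a.\Omega) = \pi_{p_{0}}(a)\xi_{\psi}\ (a\in M_{p_{0}}),\qquad V(\vert w,\mu\vert.\Omega) = \pi_{s(e_{1})}(x_{0})w_{e_{1}}\cdots w_{e_{n}}\pi_{r(e_{n})}(x_{n})\xi_{\psi},$$
the latter for reduced pairs $(w,\mu)$ from $p_{0}$ to $p_{0}$ with $w=(e_{1},\dots,e_{n})$ and $\mu=(x_{0},\dots,x_{n})$. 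The image $V(\mathcal{A}.\Omega)$ equals $L_{0}.\xi_{\psi}$, where $L_{0}$ is the $\sigma$-weakly dense $*$-subalgebra of $L$ linearly spanned by $\pi_{p_{0}}(M_{p_{0}})$ and the specified products; by Kaplansky density $L_{0}$ is $\sigma$-strongly dense in $L$, and cyclicity of $\xi_{\psi}$ then yields that this image is norm-dense in $K'$.

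The main step is the isometry of $V$, which by polarization reduces to $\varphi(a^{*}a) = \psi(\pi(a)^{*}\pi(a))$ for $a\in \mathcal{A}$. The key observation is that using only the relations $u_{\bar{e}}s_{e}(x)u_{e}=r_{e}(x)$, multiplication in the $M_{q}$'s, and the decomposition $y=\EE_{e}^{s}(y)+(y-\EE_{e}^{s}(y))$, the product $\vert w_{1},\mu_{1}\vert^{*}\vert w_{2},\mu_{2}\vert$ expands as a finite sum of left reduced operators from $p_{0}$ to $p_{0}$ plus a single element of $M_{p_{0}}$; this is the von Neumann algebraic transcription of Lemma \ref{lem:product} and of the isometry computation in the proof of Proposition \ref{prop:base}. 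The same relations hold on the $L$-side by hypothesis, so $\pi(a)^{*}\pi(a)$ admits the image of the same expansion under the formula defining $\pi$. Since $\varphi$ annihilates every left reduced operator and $\psi$ annihilates each $\pi_{s(e_{1})}(x_{0})w_{e_{1}}\cdots w_{e_{n}}\pi_{r(e_{n})}(x_{n})$ by assumption, both states collapse to the $M_{p_{0}}$-summand, and the identities $\varphi|_{M_{p_{0}}}=\varphi_{p_{0}}$ and $\psi\circ\pi_{p_{0}}=\varphi_{p_{0}}$ conclude the check.

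Once $V$ has been extended to a unitary, setting $\pi(x)=VxV^{*}$ yields a normal injective $*$-homomorphism $M\to \B(K')$ whose image contains the $\sigma$-weakly dense subalgebra $L_{0}\subset L$ and hence equals $L$ by normality. The identity $V\Omega=\xi_{\psi}$ gives $\varphi=\psi\circ\pi$, and the prescribed formula for $\pi$ on $\mathcal{A}$ holds by construction. The main obstacle is the product expansion just described, but this is a direct transcription of the C*-algebraic computation already carried out in Section \ref{sec:graphCstar} and involves no genuinely new ideas.
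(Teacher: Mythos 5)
Your proposal is correct and follows essentially the same route as the paper: the paper simply states that the proof is identical to that of Proposition \ref{prop:universalreduced}, i.e.\ one builds the unitary $V$ between the two GNS spaces and conjugates, with the isometry check resting on the expansion of products of reduced operators (Lemma \ref{lem:product}, as in the proof of Proposition \ref{prop:base}) together with the vanishing of $\varphi$ and $\psi$ on reduced operators. Your write-up just makes explicit the details the paper leaves implicit.
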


\noindent The proof is the same as the one of Proposition \ref{prop:universalreduced}. Using this and the universal properties of von Neumann amalgamated free product and von Neumann HNN extensions, we get the following straightforward von Neumann version of the unscrewing process.

\begin{prop}
Let $(\Gr, (M_{q})_{q}, (N_{e})_{e})$ be a graph of von Neumann algebras. Then, the fundamental von Neumann algebra $\pi_{1}(\Gr, (M_{q})_{q}, (N_{e})_{e})$ is isomorphic to an inductive limit of iterations of amalgamated free products and HNN extensions of vertex algebras amalgamated over edge algebras.
\end{prop}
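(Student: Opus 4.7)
The plan is to mirror the C*-algebraic strategy of Theorem \ref{thm:unscrewing} step by step, using the von Neumann universal property of Proposition \ref{prop:universalvonneumann} in place of Proposition \ref{prop:universalreduced}. First I would establish the result for finite graphs by induction on the number of edges, and then pass to arbitrary graphs via a $\sigma$-weakly convergent inductive limit argument.

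For the finite case, the induction starts from a single vertex (where the statement is trivial) and proceeds as in Lemmas \ref{lem:freeproduct} and \ref{lem:hnnextension}. Fix a pair $\{e,\overline{e}\}\in\E(\Gr)$ and let $\Gr'$ be the graph obtained by removing this pair. In the disconnected case, let $\Gr_1,\Gr_2$ be the components containing $s(e),r(e)$ and let $M_1,M_2$ be the fundamental von Neumann algebras of the restricted graphs (which, by Remark \ref{rem:commutant} and Proposition \ref{prop:modulargraph}, come with distinguished normal faithful states whose modular automorphism groups restrict correctly to $s_e(N_e)$ and $r_e(N_e)$). I would verify the hypotheses of the universal property of the von Neumann amalgamated free product $M_1\underset{N_e}{*}M_2$, using the inclusions $M_k\hookrightarrow M$ provided by Proposition \ref{prop:universalvonneumann} together with the relation $u_{\overline{e}}s_e(b)u_e=r_e(b)$, and use the reduced-operator characterization of the fundamental state (which vanishes on reduced operators) to check the freeness-with-amalgamation condition. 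In the connected case, I would fix a maximal subtree $\T\subset\Gr'$, let $g$ be the unique geodesic from $s(e)$ to $r(e)$ in $\T$, set $\theta=u_g[r_e\circ s_e^{-1}(\cdot)]u_g^*$ and identify $u_gu_{\overline{e}}$ with the stable letter of the von Neumann HNN extension $\HNN(M',N_e,\theta)$ where $M'$ is the fundamental von Neumann algebra of $\Gr'$. The verification is the direct transcription of the computation in Lemma \ref{lem:hnnextension}: each word in the HNN presentation is rewritten, modulo the relations, as a reduced operator of $M$, and hence is annihilated by the fundamental state, which is precisely the universal property of the reduced HNN extension.

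For the general case, I would fix $p_0\in\V(\Gr)$ and consider the directed set $C_f(\Gr,p_0)$ of finite connected subgraphs of $\Gr$ containing $p_0$, ordered by inclusion. Proposition \ref{prop:universalvonneumann} embeds canonically $M_\K\subset M$ for every $\K\in C_f(\Gr,p_0)$, and $M_{\K_1}\subset M_{\K_2}$ whenever $\K_1\subset\K_2$, in a state-preserving way. Let
\[
M_\infty=\overline{\bigcup_{\K\in C_f(\Gr,p_0)}M_\K}^{\,\sigma\text{-weak}}\subset M.
\]
Any left reduced operator from $p_0$ to $p_0$ lies in some $M_\K$ (take $\K$ to contain the finitely many edges and vertices appearing in its reduced expression), so $M_\infty$ contains a $\sigma$-weakly dense $*$-subalgebra of $M$ and thus equals $M$.

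The one delicate point, and the main obstacle, is ensuring that the inductive von Neumann constructions are legitimate: the amalgamated free product and HNN extension of von Neumann algebras require state-preserving inclusions whose modular automorphism groups restrict compatibly to the amalgamated subalgebra. This is precisely why we imposed the modular compatibility $\sigma^{s(e)}_t\circ s_e=s_e\circ\sigma^e_t$ in the definition of a graph of von Neumann algebras, and why Proposition \ref{prop:modulargraph} is needed: it guarantees that the fundamental state on $M_\K$ is faithful and that its modular group preserves $s_e(N_e)\subset M_\K$, so the induction step stays within the category of amalgamated free products and HNN extensions of von Neumann algebras with faithful normal states. Given this, the rest of the argument is a direct translation of the C*-algebraic proof.
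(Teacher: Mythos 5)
Your proposal is correct and follows exactly the route the paper intends: the paper states this proposition with essentially no proof, remarking only that it follows from Proposition \ref{prop:universalvonneumann} together with the universal properties of von Neumann amalgamated free products and HNN extensions, i.e.\ by the same two-case induction on finite graphs plus inductive limit as in Lemmas \ref{lem:freeproduct}, \ref{lem:hnnextension} and Theorem \ref{thm:unscrewing}. Your additional observation that the modular compatibility condition and Proposition \ref{prop:modulargraph} are what keep the induction inside the category of von Neumann algebras with faithful normal states is a correct and worthwhile elaboration of the point the paper leaves implicit.
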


\noindent As an application of the unscrewing process, we prove a permanence property. In the next statement, $\mathcal{R}$ denotes the hyperfinite $\II_{1}$ factor and $\mathcal{R}^{\omega}$ is an ultraproduct of $\mathcal{R}$.

\begin{cor}\label{cor:embedable}
Let $(\Gr, (M_{q})_{q}, (N_{e})_{e})$ be a graph of \emph{finite} von Neumann algebras.
\begin{enumerate}
\item If all the algebras $N_{e}$ are amenable, then the reduced fundamental von Neumann algebra embeds into $\mathcal{R}^{\omega}$ if and only if all the algebras $M_{p}$ embed into $\mathcal{R}^{\omega}$.
\item If all the algebras $N_{e}$ are finite-dimensional, then the fundamental von Neumann algebra has the Haagerup property if and only if all the algebras $M_{p}$ have the Haagerup property.
\end{enumerate}
\end{cor}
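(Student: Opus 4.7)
The plan is to apply the von Neumann version of the unscrewing process stated immediately before the corollary, which exhibits $M := \pi_{1}(\Gr,(M_{q})_{q},(N_{e})_{e})$ as an inductive limit of von Neumann algebras obtained by iterating amalgamated free products and HNN extensions of vertex algebras over edge algebras. The ``only if'' direction in both (1) and (2) is routine: fixing a maximal subtree $\T$ of $\Gr$, the canonical embedding $\pi^{\T}_{q} \colon M_{q} \hookrightarrow M$ is unital, normal, faithful, and trace-preserving by Proposition~\ref{prop:modulargraph}, hence admits a trace-preserving normal conditional expectation $M \to \pi^{\T}_{q}(M_{q})$. Both $\mathcal{R}^{\omega}$-embeddability and the Haagerup property for finite von Neumann algebras are inherited by unital von Neumann subalgebras admitting such an expectation.

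For the ``if'' direction of (1), I would argue at the level of a finite subgraph $\mathcal{K}$ of $\Gr$ using the unscrewing description: the associated fundamental algebra $M_{\mathcal{K}}$ is a finite iteration of amalgamated free products and HNN extensions of $\mathcal{R}^{\omega}$-embeddable vertex algebras over amenable edge algebras. I would then invoke the theorem of Brown--Dykema--Jung that a tracial amalgamated free product of $\mathcal{R}^{\omega}$-embeddable finite von Neumann algebras over a common amenable subalgebra is again $\mathcal{R}^{\omega}$-embeddable, together with its HNN counterpart (the HNN extension reduces to an amalgamated free product with an interpolated amenable algebra via Ueda's trick, or may be handled directly by the same ultrapower argument). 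The passage to the inductive limit over all finite connected subgraphs containing $p_{0}$ is automatic: one has an increasing $\sigma$-weakly dense chain $M_{\mathcal{K}} \subset M$, each $M_{\mathcal{K}}$ embeds trace-preservingly into $\mathcal{R}^{\omega}$, and an ultrafilter diagonal argument produces a trace-preserving embedding of the union, which extends to $M$.

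For the ``if'' direction of (2), the same scheme applies with permanence of the Haagerup property replacing permanence of $\mathcal{R}^{\omega}$-embeddability. At each atomic step, I would use the Haagerup property for amalgamated free products over finite-dimensional subalgebras (Boca's theorem, in the form of \cite[Proposition~7.13]{Daws2013haagerup}) and its HNN analogue \cite[Proposition~7.14]{Daws2013haagerup}, both of which are precisely the references already invoked in the companion theorem of Section~\ref{sec:fundamentalqgroup}. Iterating over a finite subgraph $\mathcal{K}$ gives the Haagerup property for $M_{\mathcal{K}}$.

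The main obstacle is the inductive limit step for (2): one must assemble the approximating $L^{2}$-compact normal u.c.p. maps living on each $M_{\mathcal{K}}$ into a family of such maps on $M$ that converges pointwise in $\|\cdot\|_{2}$ to the identity. The key observation making this routine is that Proposition~\ref{prop:universalvonneumann}, applied to the canonical inclusion $M_{\mathcal{K}} \hookrightarrow M$, supplies a normal trace-preserving conditional expectation $\EE_{\mathcal{K}} \colon M \to M_{\mathcal{K}}$; composing an approximating map on $M_{\mathcal{K}}$ with $\EE_{\mathcal{K}}$ yields a normal u.c.p. map on $M$ whose $L^{2}$-extension factors through the orthogonal projection onto $\Lde(M_{\mathcal{K}})$, preserving $L^{2}$-compactness. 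A standard diagonal argument over a cofinal sequence of finite subgraphs then produces the required approximating sequence on $M$, concluding the proof.
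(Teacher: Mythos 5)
Your proposal is correct and follows essentially the same route as the paper: the ``only if'' directions via trace-preserving conditional expectations onto the vertex algebras, and the ``if'' directions by reducing to finite subgraphs through the inductive limit, unscrewing into iterated amalgamated free products and HNN extensions, handling the HNN steps via Ueda's corner trick, and invoking Brown--Dykema--Jung for $\mathcal{R}^{\omega}$-embeddability and Boca's theorem for the Haagerup property. The extra detail you supply for assembling the approximating maps in the inductive limit step of (2) is a correct elaboration of what the paper leaves implicit.
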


\begin{proof}
$(1)$. One implication is obvious. For the other one, observe that since the property of embeddability in $\mathcal{R}^{\omega}$ is stable under inductive limits, we may assume that the graph $\Gr$ is finite. Hence, by induction, it suffices to prove the corollary for amalgamated free products and HNN extensions. The case of an amalgamated free product was done in \cite{browndykemajung2008entropy}. Moreover, by a result of Ueda \cite{ueda2008hnn}, an HNN extension of von Neumann algebras is a corner in an amalgamated free product of von Neumann algebras. More precisely (see \cite{fimavaes2012hnn}), we have
\begin{equation*}
\HNN(M,N,\theta)\simeq e_{11}\left((M_{2}(\C)\otimes M)\ast_{N\oplus N}(M_{2}(\C)\otimes N)\right)e_{11}.
\end{equation*}
By the result of \cite{browndykemajung2008entropy}, $(M_{2}(\C)\otimes M)\ast_{N\oplus N}(M_{2}(\C)\otimes N)$ is embeddable in $\mathcal{R}^{\omega}$ whenever $N$ is amenable and $M$ is embeddable in $\mathcal{R}^{\omega}$. This proves $(1)$.

\vspace{0.2cm}

\noindent $(2)$. Again, one implication is obvious. For the other implication we see, using the same arguments (inductive limit, induction and reduction of the case of an HNN extension to an amalgamated free product), that it is suffices to treat the case of an amalgamated free product of von Neumanna algebras with the Haagerup property amalgamated over a finite-dimensional von Neumann algebra. This was done in \cite{boca1993method}.
\end{proof}

%%%%%%%%%%%%%%%%%%%%%%%%%%%%%%%%%%%
\subsection{Relation with the C*-algebraic construction}\label{section:relationcstarvn}
%%%%%%%%%%%%%%%%%%%%%%%%%%%%%%%%%%%

Let $(\Gr, (A_{q}, \varphi_{q})_{q}, (B_{e}, \varphi_{e})_{e})$ be a graph of C*-algebras with faithful states and let $P(p_{0})$ be the reduced fundamental C*-algebra in $p_{0}\in\V(\Gr)$. For $e\in\E(\Gr)$ (resp. $q\in\V(\Gr)$), let $N_{e}$ (resp. $M_{q}$) be the von Neuman algebra generated by $B_{e}$ (resp. $A_{q}$) in the GNS representation of $\varphi_{e}$ (resp. $\varphi_{q}$). Since the states are faithful, we may view $B_{e}$ and $A_{q}$ as subalgebras of $N_{e}$ and $M_{q}$ respectively. We still denote by $\varphi_{e}$ and $\varphi_{q}$ the unique normal extension of $\varphi_{e}$ and $\varphi_{q}$ to states on $N_{e}$ and $M_{q}$ respectively.

\vspace{0.2cm}

\noindent In the sequel, \emph{we do assume that the states $\varphi_{e}$ and $\varphi_{q}$ are still faithful on $N_{e}$ and $M_{q}$}. Since $s_{e}\,:\,B_{e}\rightarrow A_{s(e)}$ is state-preserving, it extends uniquely to a normal faithful and unital $*$-homomorphism, still denoted $s_{e}$, from $N_{e}$ to $M_{s(e)}$ which is again state-preserving. Also, because $\EE_{e}^{s}\,:\,A_{s(e)}\rightarrow B^{s}_{e}$ preserves the state $\varphi_{e}\circ s_{e}^{-1}$ on $B_{e}^{s}$ (and is faithful because $\varphi_{s(e)}$ is), it extends uniquely to a normal linear map, again denoted $\EE_{e}^{s}$, from $M_{s(e)}$ to $N^{s}_{e} = s_{e}(N_{e})$ which is a state-preserving conditional expectation, i.e. satisfies
\begin{equation*}
\varphi_{e}\circ s_{e}^{-1}\circ\EE_{e}^{s} = \varphi_{s(e)}.
\end{equation*}
By Takesaki's Theorem on conditional expectations \cite{takesaki1972conditional}, it follows that $\sigma_{t}^{s(e)}\circ s_{e} = s_{e}\circ\sigma_{t}^{e}$ for all $t\in\mathbb{R}$. Hence, we have produced a graph of von Neumann algebras and we denote by $M(p_{0})$ its fundamental von Neumann algebra in $p_{0}$.

\begin{prop}\label{prop:quantumvonneumann}
Under the above assumptions, the fundamental state on $P(p_{0})$ is faithful and $M(p_{0})$ is isomorphic to the von Neumann algebra generated by $P(p_{0})$ in the GNS representation of the fundamental state.
\end{prop}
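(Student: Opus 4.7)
My plan is to produce a natural unitary $V$ from the GNS space $H$ of $\varphi$ onto the von Neumann path Hilbert space $H_{p_0,p_0}$ of the associated graph of von Neumann algebras, transport the action of $P(p_0)$ along $V$, and exploit the fact that the fundamental vector $\widehat{1}_{p_0}$ is separating for $M(p_0)$.

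The first step, producing $V$, is the main technical point. For each path $w = (e_1, \dots, e_n)$ from $p_0$ to $p_0$, I would show that the GNS of the right Hilbert $A_{p_0}$-module $\HH_w$ with respect to $\varphi_{p_0}$ coincides naturally with the von Neumann bimodule $H_w$. The basic blocks match: by the faithfulness hypothesis, the GNS of $(\HH_e^s,\varphi_e\circ s_e^{-1})$ is precisely $\Lde(M_{s(e)})$ since $\varphi_{s(e)} = \varphi_e \circ s_e^{-1} \circ \EE_e^s$ is faithful on $M_{s(e)}$; and $(\HH_e^s)^\circ$ corresponds to $\Lde(M_{s(e)}) \ominus \Lde(N_e^s)$. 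The balanced Hilbert $C^*$-module tensor products over the $B_e$ then translate, after applying $\varphi_{p_0}$, into Connes' relative tensor products over the $N_e$; this uses the unique normal extension of the C*-algebraic conditional expectation $\EE_e^s$ to $M_{s(e)}$ (Takesaki's theorem, applicable thanks to the modular compatibility $\sigma_t^{s(e)} \circ s_e = s_e \circ \sigma_t^e$). Summing over paths yields the desired unitary $V : H \to H_{p_0,p_0}$ sending the GNS cyclic vector of $\varphi$ to $\widehat{1}_{p_0}$.

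Next, I would verify that $V$ intertwines actions: for $a \in A_q$, the action on $H$ via $\pi_\varphi \circ \pi^{p_0}_{q,p_0}$ corresponds, through $V$, to the action of $a$ viewed inside $M_q \subset M(p_0)$ on $H_{p_0,p_0}$ (using the same tree-based embedding); and $\pi_\varphi(u_e^{p_0})$ corresponds to the von Neumann unitary $u_e^{p_0} \in M(p_0)$. This is a direct comparison of the definitions of $u_e^{p_0}$ and $\pi^{p_0}_{q,p_0}$ in the two settings. Consequently $V\pi_\varphi(P(p_0))V^* \subset M(p_0)$. The reverse inclusion on bicommutants holds because $M(p_0)$ is $\sigma$-weakly generated by the images of the $M_q$ (in each of which $A_q$ is $\sigma$-weakly dense by the faithfulness hypothesis) together with the $u_e^{p_0}$, all of which sit in $V\pi_\varphi(P(p_0))V^*$. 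Hence $V \pi_\varphi(P(p_0))'' V^* = M(p_0)$, proving the second assertion.

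Finally, the faithfulness of $\varphi$ goes as follows. Since $P(p_0) \subset \LL_{A_{p_0}}(\HH_{p_0,p_0})$ and $\varphi_{p_0}$ is faithful on $A_{p_0}$, the GNS representation $\pi_\varphi$ is injective: any $T \in P(p_0)$ with $\pi_\varphi(T) = 0$ satisfies $\varphi_{p_0}(\langle T\xi, T\xi\rangle) = 0$ for every $\xi\in\HH_{p_0,p_0}$, forcing $\langle T\xi,T\xi\rangle = 0$ and hence $T = 0$. On the other hand, $\widehat{1}_{p_0}$ is separating for $M(p_0)$ (because $M_R \subset M(p_0)'$ already acts cyclically), so $V^{-1}\widehat{1}_{p_0}$ is separating for $\pi_\varphi(P(p_0))''$. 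For $a \in P(p_0)$ with $\varphi(a^*a) = 0$ this gives $\pi_\varphi(a) V^{-1}\widehat{1}_{p_0} = 0$, then $\pi_\varphi(a) = 0$, and finally $a = 0$, so $\varphi$ is faithful. The main obstacle I anticipate is the Hilbert-space identification in Step~1: matching balanced $C^*$-module tensor products over the $B_e$ with Connes' relative tensor products over the $N_e$ along arbitrary paths requires careful bookkeeping of the various bimodule structures, even though each individual ingredient is standard.
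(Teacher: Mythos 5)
Your proposal is correct and follows essentially the same route as the paper: both construct a unitary $V$ from the GNS space of the fundamental state onto $H_{p_{0},p_{0}}$, transport $P(p_{0})$ into $M(p_{0})$, and derive faithfulness from the injectivity of the GNS map (via faithfulness of $\varphi_{p_{0}}$) together with the fact that $\Omega$ is separating for $M(p_{0})$ because $M_{R}\subset M(p_{0})'$. The only difference is cosmetic: you verify unitarity of $V$ by a path-by-path identification of the GNS of each $\HH_{w}$ with the relative tensor product $H_{w}$, whereas the paper defines $V$ directly on vectors $x\Omega$ for $x$ reduced and reuses the inner-product computation from Proposition \ref{prop:base}.
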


\begin{proof}
Recall that $\varphi$ denote the fundamental state on $M(p_{0})$ and denote by $\psi$ the one on $A(p_{0})$. We use the notations of Proposition \ref{prop:base} in which the GNS construction $(H_{p_{0}, \varphi_{p_{0}}}, \pi_{p_{0}, \varphi_{p_{0}}}, \xi_{p_{0}, \varphi_{p_{0}}})$ of the state $\psi$ on $P(p_{0})$ is studied (see Remark \ref{rem:GNSstates}). Define $V\,:\, H_{p_{0}, \varphi_{p_{0}}}\rightarrow H_{p_{0}, p_{0}}$ in the following way: for $a\in A_{p_0}$ we set $V(\pi_{p_{0}, \varphi_{p_{0}}}(a)\xi_{p_{0}, \varphi_{p_{0}}}) = a\Omega$ and, for $a_{0}u_{e_{1}}^{p_{0}}\dots u_{e_{n}}^{p_{0}}a_{n}\in P(p_{0})$ a reduced operator, we set
$$V(\pi_{p_{0}, \varphi_{p_{0}}}(a_{0}u_{e_{1}}^{p_{0}}\dots u_{e_{n}}^{p_{0}}a_{n})\xi_{p_{0}, \varphi_{p_{0}}}) = a_{0}u_{e_{1}}\dots u_{e_{n}}a_{n}\Omega.$$
It is easy to check, as in the proof of Proposition \ref{prop:base}, that $V$ is a unitary. Moreover one has $V\pi_{p_{0}, \varphi_{p_{0}}}(a)V^{*} = a$ for every $a\in A_{p_{0}}$ and $V(\pi_{p_{0}, \varphi_{p_{0}}}(a_{0}u_{e_{1}}^{p_{0}}\dots u_{e_{n}}^{p_{0}}a_{n}))V^{*} = a_{0}u_{e_{1}}\dots u_{e_{n}}a_{n}$ for any reduced operator $a_{0}u_{e_{1}}^{p_{0}}\dots u_{e_{n}}^{p_{0}}a_{n}\in P(p_{0})$. It follows that $V\left(\pi_{p_{0}, \varphi_{p_{0}}}(P(p_{0}))\right)''V^{*} = M(p_{0})$. Hence, $\pi(x) = VxV^{*}$ is a $*$-isomorphism between the von Neumann algebra $\left(\pi_{p_{0}, \varphi_{p_{0}}}(A(p_{0}))\right)''$ and $M(p_{0})$ and, by construction, we have $\varphi\circ\pi\circ\pi_{p_{0}, \varphi_{p_{0}}} = \psi$. Since we proved that $\varphi$ is faithful and since the GNS map $\pi_{p_{0}, \varphi_{p_{0}}}$ is faithful whenever the state $\varphi_{p_{0}}$ on $A_{p_{0}}$ is faithful, this proves that the fundamental state on $P(p_{0})$ is faithful.
\end{proof}

%Note: les modifications par rapport � la version du 8 juillet:
%Changement 6.11 et sa preuve de la v.8juillet (les trois derni�re ligne).
%Ajout Remark 3.24 (v9juillet)
%Changement notation H avec L dans la section sur les �tats (discussion apr�s Rem3.22+�nonc�+Preuve de 3.23+rem 3.24 v9juillet)
%Petits changements dans la section 4: ajout du groupe quantique.

\bibliographystyle{amsalpha}
\bibliography{quantum}

\providecommand{\bysame}{\leavevmode\hbox to3em{\hrulefill}\thinspace}
\providecommand{\MR}{\relax\ifhmode\unskip\space\fi MR }
% \MRhref is called by the amsart/book/proc definition of \MR.
\providecommand{\MRhref}[2]{%
  \href{http://www.ams.org/mathscinet-getitem?mr=#1}{#2}
}
\providecommand{\href}[2]{#2}
\begin{thebibliography}{DFSS13}

\bibitem[BCH94]{baum1994classifying}
P.~Baum, A.~Connes, and N.~Higson, \emph{{Classifying space for proper actions
  and K-theory of group C*-algebras}}, Contemp. Math. \textbf{167} (1994),
  241--292.

\bibitem[BDJ08]{browndykemajung2008entropy}
N.P. Brown, K.J. Dykema, and K.~Jung, \emph{{Free entropy dimension in
  amalgamated free products. With and appendix by Wolfgang Lück}}, Proc. Lond.
  Math. Soc \textbf{97} (2008), no.~3, 339--367.

\bibitem[Bla98]{blackadar1998k}
B.~Blackadar, \emph{{K-theory for operator algebras}}, {Mathematical Research
  Institute Publications}, vol.~5, Cambridge Univ. Pr., 1998.

\bibitem[BMT01]{bedos2001co}
E.~B{\'e}dos, G.J. Murphy, and L.~Tuset, \emph{{Co-amenability of compact
  quantum groups}}, J. Geom. Phys. \textbf{40} (2001), no.~2, 129--153.

\bibitem[Boc93]{boca1993method}
F.~Boca, \emph{{On the method of constructing irreducible finite index
  subfactors of Popa}}, Pacific J. Math. \textbf{161} (1993), no.~2, 201--231.

\bibitem[BS89]{baaj1989c}
S.~Baaj and G.~Skandalis, \emph{{C*-alg{\`e}bres de Hopf et th{\'e}orie de
  Kasparov {\'e}quivariante}}, K-theory \textbf{2} (1989), no.~6, 683--721.

\bibitem[Cun83]{cuntz1983k}
J.~Cuntz, \emph{{K-theoretic amenability for discrete groups}}, J. Reine Angew.
  Math. \textbf{344} (1983), 180--195.

\bibitem[DFSS13]{Daws2013haagerup}
M.~Daws, P.~Fima, A.~Skalski, and W.~Stuart, \emph{{The Haagerup property for
  locally compact quantum groups}}, Arxiv preprint arXiv:1303.3261 (2013).

\bibitem[Dyk04]{dykema2004exactness}
K.~J. Dykema, \emph{{Exactness of reduced amalgamated free product
  C*-algebras}}, Forum Math. \textbf{16} (2004), 161--180.

\bibitem[Fim13]{fima2012k}
P.~Fima, \emph{{K-amenability of HNN extensions of amenable discrete quantum
  groups}}, J. Funct. Anal. \textbf{265} (2013), no.~4, 507--519.

\bibitem[Fre13]{freslon2013proprietes}
A.~Freslon, \emph{{Propriétés d'approximation pour les groupes quantiques
  discrets}}, Ph.D. thesis, Université Paris VII, 2013.

\bibitem[FV12]{fimavaes2012hnn}
P.~Fima and S.~Vaes, \emph{{HNN extensions and unique group measure space
  decomposition of ${\rm II}_{1}$ factors}}, Trans. Amer. Math. Soc.
  \textbf{364} (2012), no.~5, 2601--2617.

\bibitem[Ger96]{germain1996kk}
E.~Germain, \emph{{KK-theory of reduced free-product C*-algebras}}, Duke Math.
  J. \textbf{82} (1996), no.~3, 707--724.

\bibitem[JV84]{julg1984k}
P.~Julg and A.~Valette, \emph{{K-theoretic amenability for $SL_{2}(Q_{p})$, and
  the action on the associated tree}}, J. Funct. Anal. \textbf{58} (1984),
  no.~2, 194--215.

\bibitem[MVD98]{maes1998notes}
A.~Maes and A.~Van~Daele, \emph{{Notes on compact quantum groups}}, Arxiv
  preprint arXiv:9803122 (1998).

\bibitem[Pim86]{pimsner1986kk}
M.V. Pimsner, \emph{{KK-groups of crossed products by groups acting on trees}},
  Invent. Math. \textbf{86} (1986), no.~3, 603--634.

\bibitem[PV82]{pimsnervoiculescu1982}
M.V. Pimsner and D.V. Voiculescu, \emph{{K-groups of reduced crossed products
  by free groups}}, J. Operator Theory \textbf{8} (1982), no.~1, 131--156.

\bibitem[Ser77]{serre1977arbres}
J-P. Serre, \emph{{Arbres, amalgames, SL$_{2}$}}, Ast\'{e}risque, vol.~46, Soc.
  Math. France, 1977.

\bibitem[Ska88]{skandalis1988notion}
G.~Skandalis, \emph{{Une notion de nucl{\'e}arit{\'e} en K-th{\'e}orie
  (d'apr{\`e}s J. Cuntz)}}, K-theory \textbf{1} (1988), no.~6, 549--573.

\bibitem[Tak72]{takesaki1972conditional}
M.~Takesaki, \emph{{Conditional expectations in von Neumann algebras}}, J.
  Funct. Anal. \textbf{9} (1972), no.~3, 306--321.

\bibitem[Ued05]{ueda2005hnn}
Y.~Ueda, \emph{{HNN extensions of von Neumann algebras}}, J. Funct. Anal.
  \textbf{225} (2005), no.~2, 383--426.

\bibitem[Ued08]{ueda2008hnn}
\bysame, \emph{{Remarks on HNN extensions in operator algebras}}, Illinois J.
  Math. \textbf{52} (2008), no.~3, 705--725.

\bibitem[Ver04]{vergnioux2004k}
R.~Vergnioux, \emph{K-amenability for amalgamated free products of amenable
  discrete quantum groups}, J. Funct. Anal. \textbf{212} (2004), no.~1,
  206--221.

\bibitem[Voi85]{voiculescu85symmetries}
D.V. Voiculescu, \emph{{Symmetries of some reduced free product C*-algebras}},
  Letcure Notes in Math. \textbf{1132} (1985), 556--588.

\bibitem[VV13]{vergnioux2011k}
R.~Vergnioux and C.~Voigt, \emph{{The K-theory of free quantum groups}}, Math.
  Ann. (2013).

\bibitem[Wan95]{wang1995free}
S.~Wang, \emph{Free products of compact quantum groups}, Comm. Math. Phys.
  \textbf{167} (1995), no.~3, 671--692.

\bibitem[Wor98]{woronowicz1995compact}
S.L. Woronowicz, \emph{{Compact quantum groups}}, Sym{\'e}tries quantiques (Les
  Houches, 1995) (1998), 845--884.

\end{thebibliography}

\end{document}